\def\Z{{\mathbb Z}} 
\def\Q{{\mathbb Q}} 
\def\C{{\mathbb C}}
\def\bH{{\mathbb H}} 
\def\cC{{\mathcal C}}
\def\cD{{\mathcal D}}
\def\d{{\mathfrak{d}}}
\def\cG{{\mathcal{G}}}
\def\g{{\mathfrak{g}}}
\def\hyp{{\mathrm{hyp}}}
\def\H{{\mathcal H}}
\def\L{{\mathbb{L}}}
\def\M{{\mathcal M}}
\def\cP{{\mathcal P}}
\def\p{{\mathfrak{p}}}
\def\U{{\mathcal U}}
\def\u{{\mathfrak{u}}}
\def\cV{{\mathcal V}}
\def\v{{\mathfrak v}}
\def\Z{{\mathbb{Z}}}
\def\G{{\Gamma}}
\def\Ql{{\Q_\ell}}
\def\ab{\mathrm{ab}}
\def\an{\mathrm{an}}
\def\adj{\mathrm{adj}}
\def\prol{{(\ell)}}
\def\et{\mathrm{\acute{e}t}}
\def\hyp{\mathrm{hyp}}
\def\top{\mathrm{top}}
\def\orb{\mathrm{orb}}
\def\alg{\mathrm{alg}} 
\def\sp{\mathfrak{sp}}
\newcommand{\ad}{\operatorname{ad}}
\newcommand{\Gr}{\operatorname{Gr}}
\newcommand{\Hom}{\operatorname{Hom}}
\newcommand{\Lie}{\operatorname{Lie}}
\newcommand{\Der}{\operatorname{Der}}
\newcommand\im{\operatorname{im}} 
\newcommand\id{\operatorname{id}}
\newcommand\Spec{\operatorname{Spec}}
\newcommand\Sp{\operatorname{Sp}} 
\newtheorem{theorem}{Theorem}[section]
\newtheorem{lemma}[theorem]{Lemma}
\newtheorem{proposition}[theorem]{Proposition}
\newtheorem{corollary}[theorem]{Corollary}
\newtheorem{bigtheorem}{Theorem}
\theoremstyle{definition}
\newtheorem{definition}[theorem]{Definition}
\newtheorem{example}[theorem]{Example}
\theoremstyle{remark}
\newtheorem{remark}[theorem]{Remark}
\begin{document}

\title{On the universal curves with unordered marked points
}
\author{
Ma Luo \and Tatsunari Watanabe
}
\address{School of Mathematical Sciences,  Key Laboratory of MEA (Ministry of Education), Shanghai Key Laboratory of PMMP,  East China Normal University, Shanghai}
\email{mluo@math.ecnu.edu.cn}
\address{Mathematics Department, Embry-Riddle Aeronautical University, Prescott}
\email{watanabt@erau.edu}

\thanks{The first author is supported in part by National Natural Science Foundation of China (No. 12201217), by Shanghai Pilot Program for Basic Research, and by Science and Technology Commission of Shanghai Municipality (No. 22DZ2229014).}

\begin{abstract}
\smallskip
Over any field of characteristic $0$, we prove that the homotopy exact sequence of algebraic fundamental groups for the universal curve with unordered marked points does not split.  The same nonsplitting holds for the universal hyperelliptic curve.  Our approach extends Chen’s topological result to the profinite setting and relies on the use of relative and continuous relative completions to detect the nonexistence of algebraic sections.

\end{abstract}

\maketitle

\setcounter{tocdepth}{1}
\tableofcontents

\section{Introduction}
For nonnegative integers $g$ and $n$ satisfying $2g - 2 + n > 0$ and a field $k$ of characteristic $0$, let $\M_{g,n/k}$ denote the moduli stack of proper, smooth curves of genus $g$ with $n$ ordered distinct marked points over $k$, and let  
\[
\pi_{g,n/k} : \cC_{g,n/k}\to \M_{g,n/k}
\]
be the universal curve over $\M_{g,n/k}$. The symmetric group on $n$ letters, denoted $S_n$, acts on $\M_{g,n/k}$ by permuting the $n$ marked points. We denote the quotient stack $[\M_{g,n/k}/S_n]$ by $\M_{g,[n]/k}$. The $S_n$-action extends to $\cC_{g,n/k}$, and we denote the quotient stack $[\cC_{g,n/k}/S_n]$ by $\cC_{g,[n]/k}$. Thus the universal curve $\pi_{g,n/k}$ induces a family of curves of genus $g$ with $n$ unordered marked points over $\M_{g,[n]/k}$:
\[
\pi_{g,[n]/k}: \cC_{g,[n]/k}\to \M_{g,[n]/k},
\]
which we call the universal curve of genus $g$ with $n$ unordered marked points.

Denote by $\pi_{g,n}:\cC^\an_{g,n}\to \M^\an_{g,n}$ and $\pi_{g,[n]}:\cC^\an_{g,[n]}\to \M^\an_{g,[n]}$ the corresponding complex analytic orbifolds of $\pi_{g,n/\C}$ and $\pi_{g,[n]/\C}$, respectively. In \cite[Thm.~1.3]{chen_uni}, Chen showed that for $n \geq 2$ and $g \geq 2$, the curve $\pi_{g,[n]}$ admits no sections. In this paper, we extend Chen's result to the algebraic fundamental groups of $\pi_{g,[n]/k}$.  

Let $\bar y$ be a geometric point of $\M_{g,[n]/k}$ and $C$ the fiber of $\pi_{g,[n]/k}$ over $\bar y$. Fix a geometric point $\bar x \in C$, viewed also as a geometric point of $\cC_{g,[n]/k}$. Associated to the universal curve $\pi_{g,[n]/k}$, there is a homotopy exact sequence of algebraic fundamental groups:
\begin{equation}\label{homotopy seq for unordered uni curve}
1\to \pi_1^\alg(C, \bar x)\to \pi_1^\alg(\cC_{g,[n]/k}, \bar x)\to \pi_1^\alg(\M_{g,[n]/k}, \bar y)\to 1.
\end{equation}

\begin{bigtheorem}\label{main result for uni curve}
If $g\geq 3$, then the homotopy exact sequence \eqref{homotopy seq for unordered uni curve} does not split.
\end{bigtheorem}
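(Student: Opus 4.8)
The plan is to reduce the claim to the nonexistence of an $S_n$-equivariant splitting of the homotopy exact sequence of the \emph{ordered} universal curve, and then to obstruct such a splitting after passing to relative completions, where it becomes a cohomological computation sensitive to the $S_n$-symmetry. Since $\M_{g,[n]/k}=[\M_{g,n/k}/S_n]$ and $\cC_{g,[n]/k}=[\cC_{g,n/k}/S_n]$, pulling \eqref{homotopy seq for unordered uni curve} back along the finite \'etale Galois cover $\M_{g,n/k}\to\M_{g,[n]/k}$ with group $S_n$ yields the homotopy exact sequence
\[
1\to\pi_1^\alg(C,\bar x)\to\pi_1^\alg(\cC_{g,n/k},\bar x)\to\pi_1^\alg(\M_{g,n/k},\bar y)\to 1
\]
of $\pi_{g,n/k}$ together with a compatible $S_n$-action, and a splitting of \eqref{homotopy seq for unordered uni curve} restricts to an $S_n$-equivariant splitting of it. A splitting over $k$ restricts to one over $\bar k$ (the composite of the section with $\pi_1^\alg(\cC_{g,n/k})\to\pi_1^\alg(\M_{g,n/k})\to\mathrm{Gal}(\bar k/k)$ is trivial, so its image lies in $\pi_1^\alg(\cC_{g,n/\bar k})$), and by invariance of $\pi_1^\alg$ under extension of algebraically closed fields of characteristic $0$ we may take $k=\overline\Q$, retaining the $G_\Q$-action and hence the weight filtrations. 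So it suffices to show that for $g\ge3$ and $n\ge2$ the ordered sequence admits no $S_n$-equivariant splitting.

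Next I would apply Hain's relative completion --- in its continuous (Hain--Matsumoto) form for the arithmetic fundamental group of $\M_{g,n/\Q}$ --- with respect to the Zariski-dense symplectic monodromy $\pi_1^\alg(\M_{g,n/\bar k})\to\Sp(H)$ on $H=H^1_\et(C,\Q_\ell)$. The ordered homotopy sequence is carried to an exact sequence of proalgebraic groups
\[
1\to\cP\to\cG_\cC\to\cG_\M\to 1,
\]
in which $\cG_\M$ is the (continuous) relative completion of the fundamental group of $\M_{g,n}$ over $\Sp(H)$, and $\cP$, the relative completion of $\pi_1^\alg(C)$, is prounipotent with $\cP^\ab\cong H$ in weight $-1$ and graded pieces $\Gr^W_{<-1}\cP$ manufactured from $H$ (so $\Gr^W_{-2}\cP$ is the primitive part of $\wedge^2H$, and so on). All terms carry a weight filtration and a compatible $S_n$-action, and the $S_n$-equivariant splitting from the reduction induces an $S_n$-equivariant splitting of this sequence. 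The purpose of passing to completions is twofold: by the goodness of mapping class groups and Hain's comparison theorems this reduction loses no information relevant to the splitting, while it replaces the profinite problem by one in which the weight filtration lets us run obstruction theory layer by layer up the lower central series of $\cP$, with successive obstruction and ambiguity groups $H^2(\cG_\M,\Gr^W_{\bullet}\cP)$ and $H^1(\cG_\M,\Gr^W_{\bullet}\cP)$ equal to the cohomology $H^*(\Gamma_{g,n},\V_\lambda)$ of the mapping class group with the symplectic coefficients $\V_\lambda$ that appear there.

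For $g\ge3$ the computations of Hain (and of Morita and Kawazumi on the discrete side) give $H^1(\Gamma_{g,n},\V_\lambda)=0$ for the $\Sp(H)$-irreducibles $\V_\lambda$ occurring in $\Gr^W_{<-1}\cP$, so that a splitting of $1\to\cP\to\cG_\cC\to\cG_\M\to1$ is determined, up to conjugacy, by its induced class in $H^1(\Gamma_{g,n},H)$, and the set of realizable classes is the zero locus of a single quadratic lifting obstruction valued in $H^2(\Gamma_{g,n},\Gr^W_{-2}\cP)$. The $n$ tautological sections $s_1,\dots,s_n$ of the ordered universal curve give $n$ realizable classes permuted by $S_n$ according to its standard action on $\{1,\dots,n\}$; an $S_n$-equivariant splitting would have an $S_n$-invariant class, but the $S_n$-invariant classes form only a line (for $g\ge3$) and the lifting obstruction restricted to that line has no zero --- heuristically the only candidate is the ``symmetrized'' combination $\tfrac1n\sum_i s_i$, which is forced to acquire a denominator $n$ and so is unavailable over $\widehat{\Z}$, hence for the profinite groups. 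This contradicts the reduction and proves Theorem~\ref{main result for uni curve}.

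The main obstacle is the interface between the profinite and the completed pictures, together with the explicit cohomology of the mapping class group. One has to know that passing from the profinite homotopy exact sequence to the sequence of (continuous) relative completions creates no spurious splittings and keeps intact the integral structure that makes the denominator-$n$ obstruction genuine; this rests on the goodness of $\pi_1^\alg(\M_{g,n/\bar k})$ and on Hain's structural description of the relative completion of $\Gamma_{g,n}$ over $\Sp(H)$. One also needs the vanishing $H^1(\Gamma_{g,n},\V_\lambda)=0$, the precise shape of $H^1(\Gamma_{g,n},H)$ as an $S_n$-module, and the non-vanishing of the quadratic lifting obstruction along the invariant line. It is precisely these inputs that require $g\ge3$, which is why the method does not recover Chen's range $g\ge2$.
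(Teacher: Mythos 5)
Your overall strategy coincides with the paper's: pull the hypothetical section back to an $S_n$-compatible section of the ordered sequence, pass to continuous relative completions over $\Sp(H)$, and play the classification of weight-$(-1)$ graded Lie algebra sections of $\Gr^W_\bullet\u^\prol_{\cC_{g,n}}\to\Gr^W_\bullet\u^\prol_{g,n}$ against the requirement that the section descend to the $S_n$-coinvariants. But your final step has a wrong justification and, more importantly, a genuine gap at $g=3$. The reason you give for why the invariant line carries no realizable class --- that the symmetrized combination $\tfrac1n\sum_i s_i$ ``acquires a denominator $n$ and so is unavailable over $\widehat\Z$'' --- is not a valid argument: the relative completion and its Lie algebra live over $\Q_\ell$ (base-changed from $\Q$), so dividing by $n$ costs nothing and no integral structure survives to obstruct anything. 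The correct reason is Hain's explicit computation (Proposition~\ref{lie algebra sections for g >3}): for $g\ge4$ the weight $-2$ obstruction cuts the affine space of $\Sp(H)$-equivariant candidates $(x_1,\dots,x_n)\mapsto\sum_j a_jx_j$ down to the finite set $\{\zeta_1,\dots,\zeta_n\}$, which misses the diagonal (the $S_n$-invariant candidates) as soon as $n\ge2$.

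The substantive gap is that for $g=3$ this same classification produces an extra solution $\zeta_0:(v;x_1,\dots,x_n)\mapsto(v;0,x_1,\dots,x_n)$, which lies exactly on the invariant line and visibly descends to the coinvariants. So your claim that ``the lifting obstruction restricted to that line has no zero'' is false at the first obstruction stage when $g=3$, and your argument proves nothing in that case. Excluding $\zeta_0$ is the real work of the paper: one restricts the section to the hyperelliptic locus, shows (Lemma~\ref{weight preserved for hyp}, using that $H_1(\v_g)$ is pure of weight $-2$ together with Tanaka's vanishing $H^1(\Delta_g,H)=H^1(\Delta_g,\Lambda^2_0H)=0$) that the induced section of $\v^\prol_{\cC_{g,n}}\to\v^\prol_{g,n}$ is automatically weight-preserving, and then invokes the classification of hyperelliptic graded sections modulo $W_{-5}$ from \cite{wat_hyp_univ} (Theorem~\ref{hyp weight -1}), in which only $\zeta_i^{\pm}$ for $i=1,\dots,n$ survive, so $\zeta_0$ cannot arise from a group-theoretic section. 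Without this step, or some substitute going beyond the weight $-2$ obstruction, your proof covers only $g\ge4$.
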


In \cite[Thm.~2.6]{chen_uni}, Chen also proved an analogous result for the hyperelliptic case. Let $\H_{g,n/k}$ be the closed substack of $\M_{g,n/k}$ parametrizing hyperelliptic curves of genus $g$ with $n$ ordered marked points. Denote the quotient stack $[\H_{g,n/k}/S_n]$ by $\H_{g,[n]/k}$; this is a closed substack of $\M_{g,[n]/k}$ parametrizing hyperelliptic curves of genus $g$ with $n$ unordered marked points. Restricting $\pi_{g,n/k}$ to $\H_{g,n/k}$, we obtain the universal hyperelliptic curve
\[
\pi^\hyp_{g,n/k}: \cC_{\H_{g,n/k}}\to \H_{g,n/k}.
\]
Extending the $S_n$-action to $\cC_{\H_{g,n/k}}$ yields a family over $\H_{g,[n]/k}$:
\[
\pi^\hyp_{g,[n]/k}:\cC_{\H_{g,[n]/k}}\to \H_{g,[n]/k},
\]
which we call the universal hyperelliptic curve of genus $g$ with $n$ unordered marked points.

Denote by $\pi^{\hyp}_{g,n}:\cC^\an_{\H_{g,n}}\to \H^\an_{g,n}$ and $\pi^{\hyp}_{g,[n]}:\cC^\an_{\H_{g,[n]}}\to \H^\an_{g,[n]}$ the corresponding complex analytic orbifolds of $\pi^\hyp_{g,n/\C}$ and $\pi^\hyp_{g,[n]/\C}$, respectively. In \cite[Thm.~2.6]{chen_uni}, Chen showed that for $n \geq 2$ and $g \geq 2$, the hyperelliptic universal curve $\pi^{\hyp}_{g,[n]}$ admits no sections.  

We extend Chen's result to the algebraic fundamental groups of $\pi^\hyp_{g,[n]/k}$. Associated to this universal hyperelliptic curve, there is a homotopy exact sequence of algebraic fundamental groups:
\begin{equation}\label{homotopy seq for unordered uni hyp curve}
1\to \pi_1^\alg(C, \bar x)\to \pi_1^\alg(\cC_{\H_{g,[n]/k}}, \bar x)\to \pi_1^\alg(\H_{g,[n]/k}, \bar y)\to 1,
\end{equation}
where $\bar y$ is a geometric point of $\H_{g,[n]/k}$, $C$ is the fiber of $\pi^\hyp_{g,[n]/k}$, and $\bar x$ is a geometric point of $C$.

\begin{bigtheorem}\label{main result for uni hyp curve}
If $g\geq 3$, then the homotopy exact sequence \eqref{homotopy seq for unordered uni hyp curve} does not split.
\end{bigtheorem}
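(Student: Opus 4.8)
The plan is to run, \emph{mutatis mutandis}, the argument that proves Theorem~\ref{main result for uni curve}, with the hyperelliptic mapping class group of $\H_{g,[n]}$ playing the role of the mapping class group of $\M_{g,[n]}$ throughout. Concretely, one reduces the nonsplitting of \eqref{homotopy seq for unordered uni hyp curve} to the nonvanishing of a single obstruction class in the second continuous cohomology of the relative completion of $\pi_1^\alg(\H_{g,[n]/k})$, and then identifies that class, via its Betti realization over $\C$, with the nonzero class underlying Chen's topological obstruction \cite{chen_uni}. As a preliminary reduction, note that \eqref{homotopy seq for unordered uni hyp curve} is functorial in the base field, so a section over $k$ produces one over $\bar k$; since $\H_{g,[n]}$ and its universal curve are defined over $\Q$, invariance of $\pi_1^\alg$ under extensions of algebraically closed fields of characteristic $0$ identifies the sequence over $\bar k$ with the one over $\C$, which in turn is the profinite completion of the exact sequence of orbifold fundamental groups of $\pi^\hyp_{g,[n]}$. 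It therefore suffices to show that the profinite homotopy sequence over $\C$ does not split.

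Next, let $\rho\colon\pi_1^\alg(\H_{g,[n]/k},\bar y)\to\Sp_{2g}$ be the monodromy action on $\bH := H_1^\et(C,\Ql)$ of the fiber; its image is Zariski dense in $\Sp_{2g}$ by A'Campo's description of the symplectic representation of the hyperelliptic mapping class group. Write $\cG := \cG(\H_{g,[n]})$ for the continuous relative completion of $\pi_1^\alg(\H_{g,[n]/k})$ with respect to $\rho$, and $\cG(\cC_{\H_{g,[n]}})$ for the relative completion of $\pi_1^\alg(\cC_{\H_{g,[n]/k}})$ with respect to the pulled-back representation. Because the monodromy is trivial along the fiber $C$ and Zariski dense in the reductive group $\Sp_{2g}$, and $C$ is a $K(\pi,1)$, the homotopy sequence \eqref{homotopy seq for unordered uni hyp curve} induces, exactly as in the ordered case, an exact sequence of proalgebraic groups
\begin{equation}\label{rel compl homotopy seq hyp}
1\to\cP\to\cG(\cC_{\H_{g,[n]}})\to\cG\to 1,
\end{equation}
in which $\cP$ is the prounipotent (Malcev) completion of the geometric fundamental group $\pi_1^\alg(C,\bar x)$, with $\cP^\ab\cong\bH$ as a $\cG$-module; right exactness is formal, while exactness on the left follows as in the ordered case once one knows the relative completion detects all of $H^1(C)$. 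By functoriality of relative completion, a splitting of \eqref{homotopy seq for unordered uni hyp curve} induces a splitting of \eqref{rel compl homotopy seq hyp}.

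Pushing \eqref{rel compl homotopy seq hyp} forward along $\cP\twoheadrightarrow\cP^\ab=\bH$ gives an extension of $\cG$ by $\bH$ whose class $e^\hyp_{g,[n]}\in H^2_\cts(\cG,\bH)$ is the primary obstruction to splitting: if \eqref{homotopy seq for unordered uni hyp curve} splits then $e^\hyp_{g,[n]}=0$. It thus remains to prove $e^\hyp_{g,[n]}\ne 0$ for $g\ge 3$. For this I would first pin down the relevant part of $H^2_\cts(\cG,\bH)$ by means of the known presentation of the relative completion of the hyperelliptic mapping class group --- available precisely when $g\ge 3$ --- and then compare with the complex-analytic picture: $e^\hyp_{g,[n]}$ is the $\Ql$-realization of a rational class in the degree-$2$ cohomology of the (relative completion of the) hyperelliptic mapping class group with coefficients in $H_1(C;\Q)$, and that rational class is the obstruction whose nonvanishing is obtained, by the same mechanism used following Chen \cite{chen_uni} in the proof of Theorem~\ref{main result for uni curve}, on the hyperelliptic locus. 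Equivalently, one verifies that restriction along the inclusion $\cG(\H_{g,[n]})\hookrightarrow\cG(\M_{g,[n]})$ does not kill the corresponding class $e_{g,[n]}$ for the full moduli stack.

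The main obstacle is this last step: one must have enough control of $H^2$, with $\bH$-coefficients, of the relative completion of the hyperelliptic mapping class group to guarantee that the obstruction class persists there rather than becoming zero. This is where the hypothesis $g\ge 3$ and the explicit structure of the hyperelliptic relative completion enter in an essential way; the analogous computation is already somewhat delicate for the full moduli stack, and the restriction to hyperelliptic curves has to be carried out without losing the class. Subsidiary technical points are the exactness of \eqref{rel compl homotopy seq hyp} on the left (injectivity of $\cP\to\cG(\cC_{\H_{g,[n]}})$) and the bookkeeping forced by the stacky structure of $\H_{g,[n]}$ --- the generic $\Z/2$ of automorphisms coming from the hyperelliptic involution, together with the $S_n$-quotient --- when forming \eqref{homotopy seq for unordered uni hyp curve} and passing to relative completions.
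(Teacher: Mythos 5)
Your reduction of the problem to the nonvanishing of the single class $e^\hyp_{g,[n]}\in H^2_\cts(\cG,\bH)$ is where the argument breaks: that class is zero, so it cannot detect the nonsplitting. Indeed, the pushout of \eqref{homotopy seq for unordered uni hyp curve} along $\pi_1^\alg(C,\bar x)\to\bH$ restricts, over the index-$n!$ subgroup coming from the ordered cover $\H_{g,n}\to\H_{g,[n]}$, to the pushout of the ordered sequence, and the ordered universal hyperelliptic curve has the tautological sections $x_j$, so that restricted class vanishes. Since $\bH$ is a $\Q_\ell$-vector space, restriction--corestriction gives $n!\cdot e^\hyp_{g,[n]}=0$, hence $e^\hyp_{g,[n]}=0$ already at the level of (continuous) group cohomology; and since the degree-two cohomology of the relative completion injects into group cohomology (Theorem~\ref{str}), the completion-level class vanishes as well. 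The same transfer argument kills the analogous class $e_{g,[n]}$ for the full moduli stack, so your fallback (``check that restriction to the hyperelliptic completion does not kill $e_{g,[n]}$'') also starts from a zero class. Relatedly, Chen's theorem is not the nonvanishing of such an $H^2(\cdot,\bH)$-class (it could not be, by the same transfer), and the proof of Theorem~\ref{main result for uni curve} in this paper is not an obstruction-class computation either; so the ``same mechanism'' you invoke for the key nonvanishing, which you yourself flag as the main obstacle, does not exist in the form you assume.

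What is actually needed is information strictly deeper than the abelianized extension class, and this is how the paper proceeds: a continuous section $h$ of $\pi^\hyp_{g,[n]/\bar k\ast}$ pulls back (the relevant square being a pullback) to a section $z$ of the ordered family; by Lemma~\ref{weight preserved for hyp} the induced Lie algebra section gives an $\Sp(H)$-equivariant, weight-preserving graded section of $\Gr^W_\bullet\v^\prol_{\cC_{g,n}}\to\Gr^W_\bullet\v^\prol_{g,n}$, and Theorem~\ref{hyp weight -1} (a classification modulo $W_{-5}$, i.e.\ using weights $-2$ through $-4$, resting on purity of $H_1(\v_g)$ and Tanaka's computations) forces its weight $-1$ component to be $\zeta^{\pm}_i$ for a single index $i$. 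Lemma~\ref{ab of v_{g,[n]}} identifies $\Gr^W_{-1}\v^\prol_{g,[n]}$ with the single $S_n$-coinvariant copy of $H$ and shows the ordered-to-unordered projections in weight $-1$ are the coinvariant quotients; compatibility of $\Gr^W_{-1}d\tilde h$ with $\zeta^{\pm}_i$ then makes $\mathrm{pr}_0\circ\Gr^W_{-1}d\tilde h$ equal to $\pm\id_H$ when tested at the $i$-th marked point and equal to $0$ when tested at a different one, a contradiction. In short, the nonsplitting is detected by the incompatibility of the ordered-case classification of graded Lie algebra sections with the $S_n$-symmetry, not by a primary obstruction in $H^2(\cG,\bH)$; to salvage your outline you would have to climb the tower of obstructions beyond the abelianization, which is in effect what the weight-graded argument does.
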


A key tool used in this paper is the relative completion of a discrete group, or, in the case of a profinite group, its continuous relative completion. A detailed account of their constructions and basic properties can be found in \textcolor{black}{\cite{hain_infpretor, hain_hodge_rel, Hain15}}. In particular, for the algebraic fundamental groups of the stacks considered here, their continuous relative completions over \(\Q_\ell\) are canonically isomorphic to the base change of the corresponding relative completions over \(\Q\) to \(\Q_\ell\). 

For example, the relative completion of the orbifold fundamental group 
\(\pi_1^\orb(\M^\an_{g,n})\) is a proalgebraic group over \(\Q\), defined as an extension of \(\Sp(H):=\Sp(H_1(C,\Q))\) by a prounipotent group. Its Lie algebra carries a natural mixed Hodge structure (MHS), and hence admits a weight filtration defined over \(\Q\). Each graded piece of the associated graded Lie algebra of the completion is an \(\Sp(H)\)-representation. Moreover, the universal curve \(\pi_{g,n}\) induces an \(\Sp(H)\)-equivariant graded Lie algebra section of a projection between the pronilpotent Lie algebras of the corresponding completions. 

The \(S_n\)-action together with the \(\Sp(H)\)-module structure on the weight \(-1\) parts of these Lie algebras plays a key role in the proof of Theorem~\ref{main result for uni curve}. In the hyperelliptic case, the relative completion of \(\pi_1^\orb(\H^\an_{g,n})\) admits analogous structures, which are likewise essential for the proof of Theorem~\ref{main result for uni hyp curve}.
\section{Configuration Spaces of Points on a Curve}
Let $C$ be a smooth complex algebraic curve of genus $g$. For a positive integer $n$, the ordered configuration space of $n$ distinct points on $C$ is
$$
F_n(C) := \{(x_1,\dots,x_n) \in C^n \mid x_i \neq x_j \text{ for } i \neq j \}.
$$
The symmetric group $S_n$ acts freely on $F_n(C)$ by permuting the coordinates, and we define the unordered configuration space as the quotient
$$
F_{[n]}(C) := F_n(C)/S_n.
$$

\subsection{Fundamental Groups}
Choose a basepoint configuration $(x_1,\dots,x_n) \in F_n(C)$. The corresponding topological fundamental groups are
$$
\pi_1^\top(F_n(C),(x_1,\dots,x_n)), 
\qquad 
\pi_1^\top(F_{[n]}(C),\{x_1,\dots,x_n\}),
$$
known as the \emph{surface pure braid group} and the \emph{surface braid group} on $n$ strands, respectively.

Since the action of $S_n$ on $F_n(C)$ is free, the covering map
$$
F_n(C) \longrightarrow F_{[n]}(C)
$$
is finite étale with deck group $S_n$. This induces the exact sequence
$$
1 \longrightarrow \pi_1^\top(F_n(C)) \longrightarrow \pi_1^\top(F_{[n]}(C)) \longrightarrow S_n \longrightarrow 1.
$$

\subsection{Unipotent Completions and Mixed Hodge Structures}
Denote by
\[
\cP_{g,n}, \qquad \cP_{g,[n]},
\]
the (rational) unipotent completions over~\(\Q\) of 
\(\pi_1^\top(F_n(C))\) and \(\pi_1^\top(F_{[n]}(C))\), respectively, and set
\[
\p_{g,n} := \Lie(\cP_{g,n}), \qquad 
\p_{g,[n]} := \Lie(\cP_{g,[n]}).
\]
When \(n = 1\), we have \(\pi_1^\top(F_1(C)) = \pi_1^\top(C)\), and we denote \(\p_{g,1}\) by \(\p_g\). 
By work of Morgan and Hain (cf.\ \cite{Hain87, Morgan78}), if $X$ is a smooth complex algebraic variety, then the unipotent completion of $\pi_1^\top(X)$ carries a natural mixed Hodge structure; in particular, both $\p_{g,n}$ and $\p_{g,[n]}$ are pronilpotent Lie algebras over $\Q$ equipped with mixed Hodge structures and weight filtrations defined over $\Q$. The finite étale morphism $F_n(C)\to F_{[n]}(C)$ induces a homomorphism
$$
\cP_{g,n} \longrightarrow \cP_{g,[n]},
$$
and hence a morphism of Lie algebras
$$
\p_{g,n} \longrightarrow \p_{g,[n]},
$$
which is a morphism of mixed Hodge structures and is strictly compatible with the weight filtration. (We do not assert these maps are isomorphisms here.)

\subsection{Continuous Unipotent Completions}\label{cts unipt comp}
For each prime $\ell$, let $\cP_{g,n}^{(\ell)}$ and $\cP_{g,[n]}^{(\ell)}$ denote the continuous (i.e., $\ell$-adic) unipotent completions over $\Q_\ell$ of the profinite groups $\widehat{\pi}_1^\top(F_n(C))$ and $\widehat{\pi}_1^\top(F_{[n]}(C))$, respectively. There are canonical base-change isomorphisms
$$
\p^{(\ell)}_{g,n}:=\Lie(\cP_{g,n}^{(\ell)}) \;\cong\; \p_{g,n} \otimes_\Q \Q_\ell,
\qquad
\p^{(\ell)}_{g, [n]}:=\Lie(\cP_{g,[n]}^{(\ell)}) \;\cong\; \p_{g,[n]} \otimes_\Q \Q_\ell,
$$
so the $\ell$-adic objects are obtained from their rational counterparts by extension of scalars.
\section{Mapping class groups}
Suppose that nonnegative integers $g$ and $n$ satisfy $2g-2 + n >0$. Let \( \Sigma_{g,n} \) denote a fixed reference surface of genus \( g \), namely, a closed, connected, oriented topological surface of genus \( g \) equipped with \( n \) ordered, distinct marked points. The \emph{mapping class group} \( \G_{g,n} \) is defined as the group of isotopy classes of orientation-preserving diffeomorphisms of \( \Sigma_{g,n} \) that fix each marked point individually:
\[
\G_{g,n} := \pi_0(\mathrm{Diff}^+(\Sigma_{g,n})),
\]
where isotopies are also required to fix the marked points pointwise.

The symmetric group \( S_n \) acts on \( \G_{g,n} \) by permuting the labels of the marked points. The resulting quotient
\[
\G_{g,[n]} := \G_{g,n} / S_n
\]
is the mapping class group of genus \( g \) surfaces with \( n \) unordered marked points. This leads to the short exact sequence
\begin{equation}\label{S_n action on the G_{g,n}}
1 \to \G_{g,n} \to \G_{g,[n]} \to S_n \to 1.
\end{equation}

For a group \( G \), denote its profinite completion by \( \widehat{G} \). Since \( \G_{g,n} \) is residually finite, taking the profinite completion of the exact sequence \eqref{S_n action on the G_{g,n}} yields the exact sequence of profinite groups:
\[
1 \to \widehat{\G_{g,n}} \to \widehat{\G_{g,[n]}} \to S_n \to 1.
\]

We define the \emph{hyperelliptic mapping class group} \( \Delta_g \subset \G_g \) as the centralizer of a fixed hyperelliptic involution in the mapping class group of a closed genus \( g \) surface without marked points. Using the natural forgetful homomorphism \( \G_{g,n} \to \G_g \), we define the hyperelliptic mapping class group with \( n \) ordered marked points by the fiber product
\[
\Delta_{g,n} := \Delta_g \times_{\G_g} \G_{g,n},
\]
and similarly, the version with unordered marked points as
\[
\Delta_{g,[n]} := \Delta_g \times_{\G_g} \G_{g,[n]}.
\]
These groups consist of mapping classes that preserve the hyperelliptic structure on the surface after forgetting the marked points.

\subsection{Symplectic Representations and Torelli Groups}\label{symp rep of mcg}

Both \( \G_{g,n} \) and \( \G_{g,[n]} \) admit natural symplectic representations
\[
  \rho_{g,n}: \G_{g,n} \longrightarrow \Sp(H_1(\Sigma_g,\Z)), 
  \qquad 
  \rho_{g,[n]}: \G_{g,[n]} \longrightarrow \Sp(H_1(\Sigma_g,\Z)),
\]
induced by their actions on \( H_1(\Sigma_g,\Z) \).  
It is a basic fact that \( \rho_{g,n} \) is surjective.  
Since the image of \( \rho_{g,[n]} \) contains that of \( \rho_{g,n} \), 
the representation \( \rho_{g,[n]} \) is also surjective.  
The kernel of \( \rho_{g,n} \) is the \emph{Torelli group} \( T_{g,n} \), consisting of 
mapping classes that act trivially on homology.

These representations restrict to the hyperelliptic mapping class groups 
\( \Delta_{g,n} \) and \( \Delta_{g,[n]} \), yielding
\[
  \rho^\hyp_{g,n}: \Delta_{g,n} \longrightarrow \Sp(H_1(\Sigma_g,\Z)), 
  \qquad 
  \rho^\hyp_{g,[n]}: \Delta_{g,[n]} \longrightarrow \Sp(H_1(\Sigma_g,\Z)).
\]
The kernel of \( \rho^\hyp_{g,n} \) defines the \emph{hyperelliptic Torelli group} 
\( T\Delta_{g,n} \), namely the subgroup of \( \Delta_{g,n} \) acting trivially on homology.  
The image of \( \rho^\hyp_g \) contains a finite-index subgroup of 
\( \Sp(H_1(\Sigma_g,\Z)) \) (see A’Campo~\cite{Acampo1979}), and hence the 
representations \( \rho^\hyp_{g,n} \) and \( \rho^\hyp_{g,[n]} \) both have 
Zariski-dense images in \( \Sp(H_1(\Sigma_g,\Q)) \).

Over \( \Q \), the group \( \Sp(H_1(\Sigma_g,\Q)) \) is a connected, reductive 
algebraic group whose finite-dimensional irreducible representations are 
completely classified by their \emph{highest weights}.  
Equivalently, they are classified by \emph{partitions} 
\(\alpha = (\alpha_1 \ge \alpha_2 \ge \cdots \ge \alpha_g \ge 0)\) 
of an integer \( n = \alpha_1 + \cdots + \alpha_g \) into at most \( g \) parts.
Every finite-dimensional rational representation of 
\(\Sp(H_1(\Sigma_g,\Q))\) then decomposes as a direct sum 
of such irreducible components.

\section{Moduli space of curves}
For integers $g,n$ with $2g - 2 + n > 0$, let $\M_{g,n/\Z}$ denote the moduli stack of smooth, proper algebraic curves of genus $g$ with $n$ ordered distinct marked points. The symmetric group $S_n$ acts on $\M_{g,n/\Z}$ by permuting the labels of the marked points. The quotient stack

$$
\M_{g,[n]/\Z} := [\M_{g,n/\Z} / S_n]
$$

    is then the moduli stack of curves with $n$ unordered marked points. In fact, by combining Knudsen’s result on $\M_{g,n/\Z}$ with Zintl’s construction of quotient stacks, one sees that  
$
\M_{g,[n]/\Z}
$
is a smooth Deligne–Mumford stack over $\Z$ whenever $2g - 2 + n > 0$ (cf.\ \cite[Prop.~3.17, Rem.~3.19]{zintl_perm}).

The complex analytification of the moduli stack with ordered marked points is given by  
$$
\M^\an_{g,n}:=\M_{g,n/\C}(\C) \;\simeq\; [\,\mathcal{T}_{g,n} / \G_{g,n}\,],
$$
where $\mathcal{T}_{g,n}$ denotes the Teichmüller space of genus $g$ surfaces with $n$ ordered marked points.  

Similarly, for the moduli stack with unordered marked points, the complex analytification is  
$$
\M^\an_{g,[n]} := \M_{g,[n]/\C}(\C) \;\simeq\; [\,\mathcal{T}_{g,n} / \G_{g,[n]}\,].
$$
Since $\mathcal{T}_{g,n}$ is contractible, the orbifold fundamental groups of these stacks are naturally isomorphic to the corresponding mapping class groups:
$$
\pi_1^{\orb}(\M^\an_{g,n}) \;\simeq\; \G_{g,n}, 
\qquad 
\pi_1^{\orb}(\M^\an_{g,[n]}) \;\simeq\; \G_{g,[n]}.
$$

Moreover, if $k$ is an algebraically closed field of characteristic $0$, the algebraic fundamental groups of the stacks are naturally isomorphic to the profinite completions of the corresponding mapping class groups:
$$
\pi_1^{\alg}(\M_{g,n/k}) \;\simeq\; \widehat{\G_{g,n}}, 
\qquad 
\pi_1^{\alg}(\M_{g,[n]/k}) \;\simeq\; \widehat{\G_{g,[n]}}.
$$
Let $k$ be a field. Consider the universal family
$$
\pi_{g,n/k}: \cC_{g,n/k} \to \M_{g,n/k}
$$
of smooth genus $g$ curves with $n$ marked points over $k$. We define
$$
\cC_{g,[n]/k} := [\,\cC_{g,n/k} / S_n\,],
$$
the quotient of the universal curve $\cC_{g,n/k} \to \M_{g,n/k}$ by the natural action of the symmetric group $S_n$.  

A \emph{geometric point} of $\M_{g,n/k}$ is a smooth, proper $n$--pointed curve defined over an algebraic closure $\bar{k}$,
\[
(C;\, x_1,\dots,x_n), \qquad C/\bar{k} \text{ smooth and proper of genus } g.
\]
The fiber of the universal curve $\cC_{g,n/k} \to \M_{g,n/k}$ over this point is the whole curve $C$. Thus a geometric point of $\cC_{g,n/k}$ lying above $(C;\, x_1,\dots,x_n)$ is
$$
(C;\, x_1,\dots,x_n;\, x_0), \qquad x_0 \in C(\bar{k}).
$$

The group $S_n$ acts by permuting the marked points while leaving the extra point $x_0$ fixed:
$$
(C;\, x_1,\dots,x_n;\, x_0)\;\;\mapsto\;\;(C;\, x_{\tau(1)},\dots,x_{\tau(n)};\, x_0),
\qquad \tau \in S_n.
$$
As observed by Zintl~\cite[Rem.~3.13]{zintl_perm}, the $S_n$–action identifies 
fibers of $\cC_{g,n/k}$ lying over geometric points of $\M_{g,n/k}$ 
that differ only by a permutation of the markings. 
Passing to the quotient yields a natural morphism
$$
\pi_{g ,[n]/k} : \cC_{g,[n]/k} \;\longrightarrow\; \M_{g,[n]/k},
$$
whose fiber over a geometric point 
$$
[(C;\{x_1,\dots,x_n\})] \in \M_{g,[n]/k}(\bar{k})
$$
is the whole curve $C$, regarded together with its set of $n$ unordered marked points.  

Equivalently, an object of $\cC_{g,[n]/k}$ over a scheme $T/k$ consists of an étale $S_n$–torsor $P \to T$ and an $S_n$–equivariant morphism 
$$
P \;\longrightarrow\; \cC_{g,n/k},
$$
corresponding to a family of curves with $n$ ordered disjoint sections over $P$ together with an additional section $x_0$, all compatible with the $S_n$–action.
\subsection{Homotopy exact sequences}
Let $k$ be a field of characteristic $0$, and let $\Omega$ be an algebraically closed field containing $k$. 
  Fix a geometric point $\bar y : \Spec(\Omega) \to \M_{g,n/k}$, and let 
$
C := \pi_{g,n/k}^{-1}(\bar y)
$
denote the geometric fiber of the universal curve over $\bar y$. 
Choose a geometric point $\bar x : \Spec(\Omega) \to C$, and let $\bar x$ also denote its image in $\cC_{g,n/k}$. 
Then there is a homotopy exact sequence of algebraic fundamental groups:
$$
1 \longrightarrow \pi_1^\alg(C, \bar x) 
   \longrightarrow \pi_1^\alg(\cC_{g,n/k}, \bar x) 
   \longrightarrow \pi_1^\alg(\M_{g,n/k}, \bar y) 
   \longrightarrow 1.
$$

An analogous exact sequence holds for the family 
$
\pi_{g,[n]/k}:\cC_{g,[n]/k} \to \M_{g,[n]/k}.
$ 
Let $\bar x \in \cC_{g,[n]}(\bar{k})$ and $\bar y \in \M_{g,[n]}(\bar{k})$ also denote the images of $\bar x$ and $\bar y$, respectively, under the natural projections. 
Then we have another short exact sequence:
$$
1 \longrightarrow \pi_1^\alg(C, \bar x) 
   \longrightarrow \pi_1^\alg(\cC_{g,[n]/k}, \bar x) 
   \longrightarrow \pi_1^\alg(\M_{g,[n]/k}, \bar y) 
   \longrightarrow 1.
$$

The two exact sequences fit into the following commutative diagram:
$$
\begin{tikzcd}
1 \arrow[r] & \pi_1^\alg(C, \bar x) \arrow[r] \arrow[d, "="] 
  & \pi_1^\alg(\cC_{g,n/k}, \bar x) \arrow[r] \arrow[d] 
  & \pi_1^\alg(\M_{g,n/k}, \bar y) \arrow[r] \arrow[d] 
  & 1 \\
1 \arrow[r] & \pi_1^\alg(C, \bar x) \arrow[r] 
  & \pi_1^\alg(\cC_{g,[n]/k}, \bar x) \arrow[r] 
  & \pi_1^\alg
  (\M_{g,[n]/k}, \bar y) \arrow[r] 
  & 1
\end{tikzcd}
$$
where the vertical arrows are induced by the finite étale quotient maps 
$\cC_{g,n/k} \to \cC_{g,[n]/k}$ and $\M_{g,n/k} \to \M_{g,[n]/k}$.

\subsection*{The Hyperelliptic Loci \(\H_{g,n}\) and \(\H_{g,[n]}\)}

Let $k$ be a field of characteristic $0$.  
Let 
\[
\H_{g,n/k} \subset \M_{g,n/k}
\]
denote the closed substack parametrizing smooth, proper genus $g$ hyperelliptic curves with $n$ ordered marked points. 

The stack $\H_{g,n/k}$ is a smooth Deligne--Mumford stack over $k$, provided $2g - 2 + n > 0$. 
The symmetric group $S_n$ acts on $\H_{g,n/k}$ by permuting the labels of the markings, and we define the moduli stack of hyperelliptic curves with unordered markings over $k$ as the quotient stack
\[
\H_{g,[n]/k} := [\H_{g,n/k} / S_n].
\]
The canonical projection $\H_{g,n/k} \to \H_{g,[n]/k}$ is a finite étale morphism induced by the $S_n$–action, and $\H_{g,[n]/k}$ is also a smooth Deligne--Mumford stack over $k$ when $2g - 2 + n > 0$.

Over $\C$, the analytifications
\[
\H^{\an}_{g,n} := \H_{g,n/\C}(\C), 
\qquad
\H^{\an}_{g,[n]} := \H_{g,[n]/\C}(\C)
\]
admit descriptions as orbifold quotients of Teichmüller space.
As in the case of $\M^\an_{g,n}$ and $\M^\an_{g,[n]}$, the orbifold fundamental groups are identified with the hyperelliptic mapping class groups:
\[
\pi_1^{\orb}(\H^{\an}_{g,n}) \;\simeq\; \Delta_{g,n}, 
\qquad
\pi_1^{\orb}(\H^{\an}_{g,[n]}) \;\simeq\; \Delta_{g,[n]}.
\]
For an algebraic closure $\bar k$ of $k$, there are natural isomorphisms
\[
\pi_1^{\alg}(\H_{g,n/\bar k}) \;\cong\; \widehat{\Delta_{g,n}},
\qquad
\pi_1^{\alg}(\H_{g,[n]/\bar k}) \;\cong\; \widehat{\Delta_{g,[n]}}.
\]

Let
\[
\pi^\hyp_{g,n/k} : \cC_{\H_{g,n/k}} \to \H_{g,n/k}
\]
denote the universal hyperelliptic curve with $n$ ordered marked points, and let
\[
\pi^\hyp_{g,[n]/k} : \cC_{\H_{g,[n]/k}} := [\cC_{\H_{g,n/k}} / S_n] \;\longrightarrow\; \H_{g,[n]/k}
\]
be the universal hyperelliptic curve with $n$ unordered markings.  

Over $\C$, their analytifications are given by
\[
\cC^{\an}_{\H_{g,n}} := \cC_{\H_{g,n/\C}}(\C),
\qquad
\cC^{\an}_{\H_{g,[n]}} := \cC_{\H_{g,[n]/\C}}(\C),
\]
which correspond to the universal hyperelliptic curves over $\H^{\an}_{g,n}$ and $\H^{\an}_{g,[n]}$.

Fix a geometric point $\bar y : \Spec(\Omega) \to \H_{g,n/k}$ and let
\[
C := (\pi^\hyp_{g,n/k})^{-1}(\bar y)
\]
be the fiber, with $\bar x \in C(\Omega)$ a chosen geometric point.  
Then there exist short exact sequences of algebraic fundamental groups:
\[
1 \longrightarrow \pi_1^\alg(C, \bar x)
  \longrightarrow \pi_1^\alg(\cC_{\H_{g,n/k}}, \bar x)
  \longrightarrow \pi_1^\alg(\H_{g,n/k}, \bar y)
  \longrightarrow 1,
\]
\[
1 \longrightarrow \pi_1^\alg(C, \bar x)
  \longrightarrow \pi_1^\alg(\cC_{\H_{g,[n]/k}}, \bar x)
  \longrightarrow \pi_1^\alg(\H_{g,[n]/k}, \bar y)
  \longrightarrow 1.
\]

These fit into a commutative diagram:
\[
\begin{tikzcd}
1 \arrow{r} & \pi_1^\alg(C, \bar x) \arrow{r} \arrow[d, "="] &
\pi_1^\alg(\cC_{\H_{g,n/k}}, \bar x) \arrow{r} \arrow[d] &
\pi_1^\alg(\H_{g,n/k}, \bar y) \arrow{r} \arrow[d] & 1 \\
1 \arrow{r} & \pi_1^\alg(C, \bar x) \arrow{r} &
\pi_1^\alg(\cC_{\H_{g,[n]/k}}, \bar x) \arrow{r} &
\pi_1^\alg(\H_{g,[n]/k}, \bar y) \arrow{r} & 1
\end{tikzcd}
\]
where the vertical arrows are induced by the finite étale quotient maps 
$\cC_{\H_{g,n/k}} \to \cC_{\H_{g,[n]/k}}$ and $\H_{g,n/k} \to \H_{g,[n]/k}$.
\section{Relative completion}
In this section, we review definitions and properties of relative completion and its continuous version, more details can be found in \textcolor{black}{\cite{hain_completion,hain_hodge_rel, Hain15}}. Then we apply relative completions to various mapping class groups, deduce enough relevant structures of these completions towards our main theorems.
\subsection{Relative completion and its structure}
Suppose that 
\begin{enumerate}
    \item $\Gamma$ is a discrete group,
    \item $R$ is a reductive group over $F$ ,
    \item $\rho:\Gamma\to R(F)$ is a Zariski dense homomorphism.
\end{enumerate}
The \textit{completion of $\Gamma$ relative to $\rho$}, i.e. the \textit{relative completion of  $\Gamma$}, is a pro-algebraic group $\cG_{/F}$ defined over $F$ and a homomorphism $\widetilde{\rho}:\Gamma\to\cG(F)$ that lifts $\rho$. The group $\cG$ is an extension 
$$1\to\U\to\cG\to R\to1$$
of $R$ by a pro-unipotent group $\U$, which is the unipotent radical of $\cG$. The relative completion has the following universal property: If 
$$1\to U\to G\to R\to 1$$
is an extension of pro-algebraic groups over $F$ such that $U$ is pro-unipotent, and that $\rho:\Gamma\to R(F)$ factors through a homomorphism $\phi:\Gamma\to G(F)$ via $\Gamma\to G(F)\to R(F)$, then there exists a unique homomorphism of pro-algebraic groups $\cG\to G$ that commutes with projections to $R$, such that $\phi$ is the composition $\Gamma\to\cG(F)\to G(F)$.
When $\rho$ is trivial, $\cG=\U$ is the unipotent completion of $\Gamma$.
A generalization of Levi's Theorem ensures that the extension for relative completion 
$$1\to\U\to\cG\to R\to1$$
splits and that any two such splittings are conjugate by an element of $\U$. Every splitting induces an isomorphism
$$\cG\cong R\ltimes\U$$
that commutes with the projections to $R$, where the action of $R$ on $\U$ is determined by the splitting. This action is determined by the action of $R$ on the Lie algebra $\u$ of $\U$ as $\U=\exp\u$. To give a presentation of $\cG$, it suffices to give a presentation of $\u$ in the category of $R$-modules. 

By standard arguments, $\u$ has a minimal presentation of the form
$$\u\cong\L(H_1(\u))^\wedge/(\im\varphi)\quad\text{with}\quad\im\varphi\cong H_2(\u)$$
in the category of $R$-modules, where $\L(V)$ denotes the free Lie algebra generated by the vector space $V$, and $\varphi:H_2(\u)\to[\L(H_1(\u))^\wedge,\L(H_1(\u))^\wedge]$ is an injection such that the composite
$$H_2(\u)\to[\L(H_1(\u))^\wedge,\L(H_1(\u))^\wedge]\to\Lambda^2H_1(\u)$$
is the dual to the cup product $\Lambda^2H^1(\u)\to H^2(\u)$.

\begin{theorem}[{\cite[\S3.2]{Hain15}}]\label{str} 
    For every finite dimensional $R$-module $V$, there is a natural homomorphism 
    $$\Hom_R(H_i(\u),V)\cong(H^i(\u)\otimes V)^R\to H^i(\Gamma,V)$$
    that is an isomorphism when $i=1$  and injective when $i=2$. If every irreducible finite dimensional representation of $R$ is absolutely irreducible, then there is a natural $R$-module isomorphism
    $$H^1(\u)\cong\bigoplus_\alpha H^1(\Gamma,V_\alpha)\otimes V_\alpha^*$$
    and a natural $R$-module injection
    $$H^2(\u)\hookrightarrow\bigoplus_\alpha H^1(\Gamma,V_\alpha)\otimes V_\alpha^*$$
    where $\{V_\alpha\}$ is a set of representatives of the isomorphism classes of irreducible finite dimensional $R$-modules, and $V_\alpha^*$ is the dual of $V_\alpha$.
\end{theorem}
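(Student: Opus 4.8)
The plan is to prove the theorem by routing through the continuous cohomology $H^\bullet_{\cts}(\cG,-)$ of the relative completion itself: first compare the discrete group $\Gamma$ with $\cG$, then compare $\cG$ with its prounipotent radical $\U$, hence with $\u=\Lie\U$, via the Hochschild–Serre spectral sequence. Throughout, a finite dimensional $R$-module $V$ is viewed as a $\cG$-module through $\cG\to R$, so that $\U$ acts trivially on it. The first step — and the one I expect to carry the real content — is the following standard property of relative completion: for every finite dimensional $R$-module $V$, the homomorphism $\widetilde\rho:\Gamma\to\cG(F)$ induces an isomorphism on $H^0$ and $H^1$ and an injection on $H^2$, i.e.
$$H^1_{\cts}(\cG,V)\xrightarrow{\ \sim\ }H^1(\Gamma,V),\qquad H^2_{\cts}(\cG,V)\hookrightarrow H^2(\Gamma,V).$$
This follows from the universal property of $\cG$ together with the Zariski density of $\widetilde\rho(\Gamma)$ in $\cG$: a $1$-cocycle $c:\Gamma\to V$ gives a lift $\gamma\mapsto(c(\gamma),\rho(\gamma))$ of $\rho$ into $V\rtimes R$, whose Zariski closure is again an extension of $R$ by a unipotent group and therefore receives a morphism from $\cG$; pulling back the tautological $1$-cocycle on $V\rtimes R$ gives surjectivity on $H^1$, and injectivity on $H^1$ and on $H^2$ follows by applying the same device to split extensions and invoking the uniqueness clause of the universal property. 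I would cite \cite{hain_completion, Hain15} for the details.

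Granting this, the second step is purely formal. For the extension $1\to\U\to\cG\to R\to1$ the Hochschild–Serre spectral sequence reads $E_2^{p,q}=H^p(R,H^q(\U,V))\Rightarrow H^{p+q}_{\cts}(\cG,V)$. Since $\U$ acts trivially on $V$, one has $H^q(\U,V)\cong H^q(\U,F)\otimes V\cong H^q(\u)\otimes V$ as $R$-modules, using that in characteristic $0$ the rational cohomology of the prounipotent group $\U$ agrees with the Lie algebra cohomology of $\u$. As $R$ is reductive over a field of characteristic $0$, its category of rational representations is semisimple, so $H^p(R,-)$ vanishes for $p\ge1$; the spectral sequence degenerates and yields a natural $R$-equivariant isomorphism $H^n_{\cts}(\cG,V)\cong(H^n(\u)\otimes V)^R$. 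Finite dimensionality of the relevant graded pieces gives $H^n(\u)\cong H_n(\u)^*$ and hence $(H^n(\u)\otimes V)^R\cong\Hom_R(H_n(\u),V)$; composing with the map of the first step produces the asserted natural homomorphism $\Hom_R(H_i(\u),V)\to H^i(\Gamma,V)$, an isomorphism for $i=1$ and an injection for $i=2$.

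For the final assertion, the hypothesis that every irreducible $R$-module is absolutely irreducible means $\operatorname{End}_R(V_\alpha)=F$ for all $\alpha$, so the category of rational $R$-modules is split semisimple and every such module $M$ decomposes canonically as $M\cong\bigoplus_\alpha\Hom_R(V_\alpha^*,M)\otimes V_\alpha^*$, the sum running over the irreducibles (reindexed by $V_\alpha\mapsto V_\alpha^*$, which again exhausts them), with multiplicity space $\Hom_R(V_\alpha^*,M)\cong(M\otimes V_\alpha)^R$. Applying this to $M=H^i(\u)$ termwise on its finite dimensional $R$-submodules and using the identification $(H^i(\u)\otimes V_\alpha)^R\cong H^i_{\cts}(\cG,V_\alpha)$ from the second step, the first step converts the $i=1$ isomorphism into $H^1(\u)\cong\bigoplus_\alpha H^1(\Gamma,V_\alpha)\otimes V_\alpha^*$ and the $i=2$ injectivity into $H^2(\u)\hookrightarrow\bigoplus_\alpha H^2(\Gamma,V_\alpha)\otimes V_\alpha^*$. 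The only genuine obstacle in this scheme is the first step — the cohomological comparison between $\Gamma$ and $\cG$; the rest is bookkeeping with the Hochschild–Serre spectral sequence, the reductivity of $R$, and the isotypic decomposition.
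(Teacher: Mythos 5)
Your proposal is correct and follows essentially the same route as the cited source \cite{Hain15} (the paper itself offers no proof, only the citation): comparison of $H^{i}(\Gamma,V)$ with $H^{i}_{\cts}(\cG,V)$ for $i\le 2$ via the universal property and Zariski density, then degeneration of the Hochschild--Serre spectral sequence for $1\to\U\to\cG\to R\to 1$ using the vanishing of higher rational cohomology of the reductive group $R$, and finally the isotypic decomposition. The only discrepancy is that your (correct) conclusion $H^2(\u)\hookrightarrow\bigoplus_\alpha H^2(\Gamma,V_\alpha)\otimes V_\alpha^*$ differs from the second display of the statement, which reads $H^1(\Gamma,V_\alpha)$ there --- evidently a typo in the paper, not a gap in your argument.
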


\subsection{Properties of relative completion}

\begin{proposition}[Naturality {\cite[Prop. 3.5]{Hain15}}] 
    For $j=1,2$, let $\rho_j:\Gamma_j\to R_j(F)$ be Zariski dense homomorphisms. Let $\cG_j$ and $\widetilde{\rho}_j:\Gamma_j\to\cG_j(F)$ be the completion of $\Gamma_j$ relative to $\rho_j$. If the diagram
    \begin{center}
        \begin{tikzcd}
            \Gamma_1 \ar[r, "\rho_1"] \ar[d, "\phi_\Gamma"'] & R_1(F) \ar[d, "\phi_R"]\\
            \Gamma_2 \ar[r, "\rho_2"] & R_2(F)
        \end{tikzcd}
    \end{center}
    commutes, then there is a unique homomorphism $\phi_\cG$ such that the following diagram commutes.
    \begin{center}
        \begin{tikzcd}
            \Gamma_1 \ar[r, "\widetilde{\rho}_1"] \ar[d, "\phi_\Gamma"'] \ar[rr, bend left, "\rho_1"] & \cG_1(F) \ar[r] \ar[d, "\phi_\cG"] & R_1(F) \ar[d, "\phi_R"]\\
            \Gamma_2 \ar[r, "\widetilde{\rho}_2"] \ar[rr, bend right, "\rho_2"'] & \cG_2(F) \ar[r] & R_2(F)
        \end{tikzcd}
    \end{center}
\end{proposition}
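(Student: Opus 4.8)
The plan is to obtain $\phi_\cG$ from the universal property of the relative completion $\cG_1$, applied to a suitable pullback of the defining extension of $\cG_2$. Throughout I regard $\phi_R$ as a homomorphism of algebraic groups $R_1 \to R_2$ over $F$ (this is how it enters; one only ever uses $\phi_R$ through such a morphism).

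First I would form the fiber product
$$
G := \cG_2 \times_{R_2} R_1,
$$
taken along $\phi_R : R_1 \to R_2$ and the projection $\cG_2 \to R_2$. Since $\cG_2 \to R_2$ is a surjection of pro-algebraic groups with prounipotent kernel $\U_2$ (the unipotent radical of $\cG_2$), its base change $G \to R_1$ along $\phi_R$ is again a surjection with the same kernel $\U_2$, so that
$$
1 \to \U_2 \to G \to R_1 \to 1
$$
is an extension of pro-algebraic groups over $F$ with prounipotent kernel, i.e. precisely the kind of extension to which the universal property of $\cG_1$ applies. Next I would produce a lift of $\rho_1$ to $G(F)$: the composite $\widetilde{\rho}_2 \circ \phi_\Gamma : \Gamma_1 \to \cG_2(F)$ maps, under $\cG_2 \to R_2$, to $\rho_2 \circ \phi_\Gamma = \phi_R \circ \rho_1$ by commutativity of the given square (using that $\widetilde{\rho}_2$ lifts $\rho_2$). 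Hence the pair $(\widetilde{\rho}_2 \circ \phi_\Gamma,\ \rho_1)$ defines a homomorphism $\phi : \Gamma_1 \to \cG_2(F) \times_{R_2(F)} R_1(F) = G(F)$ whose composite with $G(F) \to R_1(F)$ is $\rho_1$. The universal property of $\cG_1$ then yields a unique homomorphism of pro-algebraic groups $\psi : \cG_1 \to G$ commuting with the projections to $R_1$ and satisfying $\phi = \big(\Gamma_1 \xrightarrow{\widetilde{\rho}_1} \cG_1(F) \xrightarrow{\psi} G(F)\big)$. I would then set $\phi_\cG := (\text{projection } G \to \cG_2) \circ \psi$.

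It then remains to check the two commutativities and uniqueness. That $\cG_1 \to \cG_2 \to R_2$ agrees with $\cG_1 \to R_1 \xrightarrow{\phi_R} R_2$ follows because $\psi$ respects the projections to $R_1$ and $G \to \cG_2 \to R_2$ equals $G \to R_1 \xrightarrow{\phi_R} R_2$ by definition of the fiber product; that $\phi_\cG \circ \widetilde{\rho}_1 = \widetilde{\rho}_2 \circ \phi_\Gamma$ is read off from the first component of $\phi$. For uniqueness, if $\phi_\cG'$ also fits into the big diagram, then $(\phi_\cG',\ \cG_1 \to R_1)$ is a homomorphism $\cG_1 \to G$ respecting the projections to $R_1$ and inducing $\phi$ on $\Gamma_1$; by the uniqueness clause of the universal property it must coincide with $\psi$, whence $\phi_\cG' = \phi_\cG$.

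The only step needing genuine care is the claim in the second paragraph that the pullback $G = \cG_2 \times_{R_2} R_1$ is again an extension of $R_1$ by the prounipotent group $\U_2$ --- that is, that $G \to R_1$ stays surjective as a map of pro-algebraic groups. This rests on the fact that in the category of pro-affine group schemes a quotient map is faithfully flat, hence remains surjective after base change along $\phi_R$, with unchanged kernel. Granting this, everything else is a formal diagram chase with the universal property, and I anticipate no further obstacle; the essential idea is simply to recognize the pullback $\cG_2 \times_{R_2} R_1$ as the extension of $R_1$ through which $\rho_1$ naturally factors.
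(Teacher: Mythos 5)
Your argument is correct and is essentially the standard one: the paper itself only cites Hain's Prop.~3.5, and the proof there proceeds exactly as you do, by pulling back the extension $1\to\U_2\to\cG_2\to R_2\to 1$ along $\phi_R$ to get an extension of $R_1$ with prounipotent kernel and then invoking the universal property of $\cG_1$, with uniqueness read off from the uniqueness clause. You also correctly isolate and justify the one nontrivial point, namely that $\cG_2\times_{R_2}R_1\to R_1$ remains surjective with kernel $\U_2$, so nothing is missing.
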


\begin{proposition}[Right exactness {\cite[Prop. 3.6]{Hain15}}]
    For $j=1,2,3$, let $\rho_j:\Gamma_j\to R_j(F)$ be Zariski dense homomorphisms. Let $\cG_j$ and $\widetilde{\rho}_j:\Gamma_j\to\cG_j(F)$ be the completion of $\Gamma_j$ relative to $\rho_j$. Suppose that one has the diagram 
    \begin{center}
        \begin{tikzcd}
            1 \ar[r] & \Gamma_1 \ar[r]\ar[d] & \Gamma_2 \ar[r]\ar[d] & \Gamma_3 \ar[r]\ar[d] & 1 \\
            1 \ar[r] & R_1 \ar[r] & R_2 \ar[r] & R_3 \ar[r] & 1
        \end{tikzcd}
    \end{center}
    with exact rows, then the corresponding diagram of relative completions 
    \begin{center}
        \begin{tikzcd}
             & \cG_1 \ar[r]\ar[d] & \cG_2 \ar[r]\ar[d] & \cG_3 \ar[r]\ar[d] & 1 \\
            1 \ar[r] & R_1 \ar[r] & R_2 \ar[r] & R_3 \ar[r] & 1
        \end{tikzcd}
    \end{center}
    has right exact top row.
\end{proposition}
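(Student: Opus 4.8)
The plan is to extract everything from the universal property of relative completion, using only the structural facts already recorded: each $\cG_j$ fits in an extension $1\to\U_j\to\cG_j\to R_j\to1$ with $\U_j$ pro-unipotent and $\cG_j\to R_j$ surjective, and $\widetilde{\rho}_j(\Gamma_j)$ is Zariski dense in $\cG_j$. The density statement itself follows from the universal property: the Zariski closure of $\widetilde{\rho}_j(\Gamma_j)$ is again an extension of $R_j$ by a pro-unipotent group through which $\rho_j$ factors, so it receives a map from $\cG_j$ which, by the uniqueness clause, splits the inclusion; hence the closure is all of $\cG_j$. The maps $\cG_1\to\cG_2\to\cG_3$ are those furnished by the Naturality proposition, and I use freely that a homomorphism of pro-algebraic groups has closed image and that the smallest closed normal subgroup containing a given subset exists.

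First I would prove that $\cG_2\to\cG_3$ is surjective. Let $G\subseteq\cG_3$ be its (closed) image. Since $\Gamma_2\to\Gamma_3$ is onto and the naturality squares commute, $\widetilde{\rho}_3(\Gamma_3)\subseteq G(F)$; since $\cG_2\to R_2\to R_3$ is onto and factors through $G$, the map $G\to R_3$ is onto, so $1\to G\cap\U_3\to G\to R_3\to1$ is an extension with pro-unipotent kernel through which $\rho_3$ factors. The universal property of $\cG_3$ then gives a homomorphism $\cG_3\to G$ over $R_3$ compatible with $\widetilde{\rho}_3$; composing with $G\hookrightarrow\cG_3$ produces an endomorphism of $\cG_3$ over $R_3$ compatible with $\widetilde{\rho}_3$, which by the uniqueness clause is the identity. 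Hence $G=\cG_3$.

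Next I would identify the kernel. The same density trick shows that $\cG_1\to\cG_2\to\cG_3$ is trivial: it lands in $\U_3$ because $R_1\to R_3$ is trivial, and it kills the Zariski-dense subgroup $\widetilde{\rho}_1(\Gamma_1)$ because $\Gamma_1\to\Gamma_2\to\Gamma_3$ is trivial, so it kills all of $\cG_1$. Let $N\trianglelefteq\cG_2$ be the closed normal subgroup generated by the image $M$ of $\cG_1\to\cG_2$; then $N\subseteq\ker(\cG_2\to\cG_3)$, and the first step gives a surjection $q\colon\cG_2/N\twoheadrightarrow\cG_3$. To invert it, the key point is that the image of $M$ in $R_2$ equals the image of $\cG_1\to R_1\to R_2$, which is $R_1=\ker(R_2\to R_3)$ since $\cG_1\to R_1$ is surjective. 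Consequently $\cG_2\to R_2\to R_3$ kills $N$, and $1\to\U_2/(N\cap\U_2)\to\cG_2/N\to R_3\to1$ is an extension with pro-unipotent kernel; moreover $\im(\Gamma_1\to\Gamma_2)$ maps into $M\subseteq N$, so $\widetilde{\rho}_2$ descends to $\bar{\rho}\colon\Gamma_3\to(\cG_2/N)(F)$ lifting $\rho_3$. The universal property of $\cG_3$ now yields $s\colon\cG_3\to\cG_2/N$ over $R_3$ with $s\circ\widetilde{\rho}_3=\bar{\rho}$. Then $q\circ s$ is an endomorphism of $\cG_3$ over $R_3$ fixing $\widetilde{\rho}_3$, hence the identity by uniqueness; and $s\circ q$ fixes the Zariski-dense subgroup $\bar{\rho}(\Gamma_3)$ of $\cG_2/N$, hence is the identity. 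So $q$ is an isomorphism, $\ker(\cG_2\to\cG_3)=N$, and $\cG_1\to\cG_2\to\cG_3\to1$ is exact.

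The main obstacle is the second step — specifically, recognizing $\cG_2/N$ as an extension of $R_3$ by a pro-unipotent group. This rests on the computation that $\im(\cG_1\to\cG_2)$ surjects onto $R_1$ inside $R_2$, together with care in handling closed (normal) subgroups and closed images in the pro-algebraic setting; once that structure is in place, the universal property finishes the argument exactly as in the first step.
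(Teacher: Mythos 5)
Your argument is correct. Note, however, that the paper does not prove this proposition at all: it is quoted verbatim from Hain and justified only by the citation [Hain15, Prop.~3.6], so there is no in-paper proof to compare against. Your universal-property argument is essentially the standard one behind Hain's statement: Zariski density of $\widetilde\rho_j(\Gamma_j)$ in $\cG_j$ (itself a consequence of the uniqueness clause), surjectivity of $\cG_2\to\cG_3$ from that density, and identification of the kernel by showing that $\cG_2/N$ ($N$ the closed normal subgroup generated by $\im(\cG_1\to\cG_2)$) is an extension of $R_3$ by a prounipotent group through which $\rho_3$ factors, then playing the two universal maps against each other. The one point worth making explicit is the reading of ``right exact'': exactness at $\cG_2$ here means precisely that $\ker(\cG_2\to\cG_3)$ is the smallest closed normal subgroup containing the image of $\cG_1$, which is exactly what you prove; and your key computation --- that $\im(\cG_1\to\cG_2)$ maps onto $R_1=\ker(R_2\to R_3)$ because $\cG_1\twoheadrightarrow R_1$, so that the kernel of $\cG_2/N\to R_3$ is a quotient of $\U_2$ and hence prounipotent --- is the crux and is handled correctly, modulo the standard facts about pro-algebraic groups in characteristic $0$ (closed images, existence of quotients by closed normal subgroups, isomorphism theorems) that you rightly flag.
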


\begin{proposition}[Base change {\cite[Cor. 4.14]{hain_completion}}]
    Suppose that $F$  is a field of characteristic zero, and the relative completion $\cG_{/\Q}$ of $\rho:\Gamma\to R(\Q)$ is defined over $\Q$, then there is a natural isomorphism 
    $$\cG_{/F}\xrightarrow{\cong}\cG_{/\Q}\otimes_\Q F.$$
\end{proposition}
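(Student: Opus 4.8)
The plan is to identify $\cG_{/\Q}\otimes_\Q F$ with the completion $\cG_{/F}$ of $\Gamma$ relative to the composite $\rho_F\colon\Gamma\xrightarrow{\rho}R(\Q)\hookrightarrow R(F)$, where we also write $R$ for the reductive $F$-group $R\otimes_\Q F$. Extension of scalars along the faithfully flat inclusion $\Q\hookrightarrow F$ turns the extension $1\to\U\to\cG_{/\Q}\to R\to1$ into an extension $1\to\U\otimes_\Q F\to\cG_{/\Q}\otimes_\Q F\to R\otimes_\Q F\to1$ of a reductive group by a prounipotent one, since in characteristic zero both reductivity and prounipotence of (pro)algebraic groups are stable under arbitrary field extension. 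As $\rho$ has Zariski-dense image and the Zariski closure of a subset of $R(\Q)$ commutes with $-\otimes_\Q F$, the induced lift $\Gamma\to(\cG_{/\Q}\otimes_\Q F)(F)$ of $\rho_F$ also has Zariski-dense image. Hence the universal property of relative completion recalled above produces a canonical homomorphism $\Psi\colon\cG_{/F}\to\cG_{/\Q}\otimes_\Q F$ over $R$, compatible with the homomorphisms from $\Gamma$; this $\Psi$ is surjective because its image is a closed subgroup containing the Zariski-dense image of $\Gamma$, and it is the natural map that the proposition asserts is an isomorphism.

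It remains to prove $\Psi$ is injective. Passing to prounipotent radicals and then to their Lie algebras, $\Psi$ induces a surjection of pronilpotent $R$-Lie algebras $\u_{/F}\twoheadrightarrow\u\otimes_\Q F$, and such a surjection is an isomorphism as soon as it induces an isomorphism on $H_1$, by a completed Nakayama argument along the lower central series. So the whole matter reduces to showing that $H_1(\u_{/F})\to H_1(\u)\otimes_\Q F$ is an isomorphism of $R$-modules. Since Theorem~\ref{str} gives natural isomorphisms $\Hom_R(H_1(\u_{/F}),V)\cong H^1(\Gamma,V)$ for every finite-dimensional $F$-representation $V$ of $R$, and likewise over $\Q$, this amounts to showing that the natural map $\Hom_R(H_1(\u)\otimes_\Q F,\,V)\to H^1(\Gamma,V)$ is an isomorphism for every such $V$. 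Writing the bar complex as $C^\bullet(\Gamma,V)=C^\bullet(\Gamma,\Q)\otimes_\Q V$ shows that group cohomology with finite-dimensional coefficients commutes with the exact functor $-\otimes_\Q F$; together with the fact that, $R$ being defined over $\Q$, every finite-dimensional $F$-representation of $R$ is a direct summand of $W\otimes_\Q F$ for some $\Q$-representation $W$, this lets one reduce the comparison to representations defined over $\Q$, where it holds by Theorem~\ref{str} applied over $\Q$.

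I expect the main obstacle to be this last reduction rather than the first two steps: the relative completion is pinned down only by a universal property, with no explicit construction that visibly commutes with base change, so one must reconstruct $\cG_{/F}$ from the $R$-module $H_1(\u_{/F})$ and then control the interaction of extension of scalars both with group cohomology and with the $R$-module structure on the locally finite, generally infinite-dimensional module $H_1(\u)$, including the interchange of $\Hom_R$ with $-\otimes_\Q F$ and the decomposition of $F$-representations of $R$ into base changes of $\Q$-representations. This is precisely the bookkeeping carried out in \cite[\S4]{hain_completion}. In all the situations needed in this paper $R$ is a product of symplectic groups $\Sp(H)$ and general linear groups, which are split over $\Q$ with absolutely irreducible representations, so the reduction is immediate and $\Psi$ is an isomorphism.
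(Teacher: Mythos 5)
The paper itself offers no proof of this proposition: it is quoted directly from Hain \cite[Cor.~4.14]{hain_completion}, so there is nothing internal to compare against, and your argument has to stand on its own. Your first two steps are sound and are the natural ones: the universal property does give a canonical $\Psi\colon \cG_{/F}\to \cG_{/\Q}\otimes_\Q F$ over $R$, and Zariski density of the image of $\Gamma$ is indeed preserved under extension of scalars (the ideal of the closure is spanned by $\Q$-rational functions, by the usual linear-independence argument), which gives surjectivity of $\Psi$.

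The genuine gap is the injectivity step. A surjection of pronilpotent Lie algebras that induces an isomorphism on $H_1$ need \emph{not} be injective: the projection from the free pronilpotent Lie algebra on a vector space $V$ onto the abelian Lie algebra $V$ is an isomorphism on $H_1$ with enormous kernel. Nakayama-type arguments along the lower central series give surjectivity from surjectivity on $H_1$, never injectivity, so reducing ``$\Psi$ is an isomorphism'' to ``$H_1(\u_{/F})\to H_1(\u)\otimes_\Q F$ is an isomorphism'' is not legitimate as stated. To close it one must also control relations, e.g.\ by comparing minimal presentations using the degree-two part of Theorem~\ref{str} (the natural injection of $H_2(\u)$ into $\bigoplus_\alpha H^2(\Gamma,V_\alpha)\otimes V_\alpha^*$, applied naturally on both sides), or by arguing as Hain does at the level of the construction of the completion as an inverse limit of extensions classified by $H^1(\Gamma,-)$ and obstructed by $H^2(\Gamma,-)$, both compatible with $-\otimes_\Q F$. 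A secondary point: the bar-complex identity $C^\bullet(\Gamma,V)\cong C^\bullet(\Gamma,\Q)\otimes V$ does not by itself show that $H^i(\Gamma,W\otimes_\Q F)\cong H^i(\Gamma,W)\otimes_\Q F$, because cochains on an infinite group form a product, which does not commute with the infinite extension $\Q\subset F$; one needs a finiteness hypothesis (e.g.\ $\Gamma$ finitely presented, or $H^i(\Gamma,V)$ finite-dimensional for $i\le 2$). This holds for the mapping class groups used in this paper, but it is an assumption your argument silently uses, and it is part of why the statement is usually proved under Hain's hypotheses rather than for an arbitrary discrete group.
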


We are mostly interested in the case when we base change from $\Q$ to $F=\Ql$. This is closely related to the continuous relative completion which is defined as follows.

\begin{definition}[Continuous relative completion]
Suppose that 
\begin{enumerate}
    \item $\widehat\Gamma$ is a profinite group,
    \item $R$ is a reductive group over $\Q_\ell$,
    \item $\rho^\prol:\widehat\Gamma\to R(\Q_\ell)$ is a \textit{continuous} Zariski dense homomorphism. 
\end{enumerate}
The \textit{continuous completion of $\widehat\Gamma$ relative to $\rho^\prol$} 
(or the \textit{continuous relative completion of $\widehat\Gamma$}) 
is a pro-algebraic group $\cG^\prol$ defined over $\Q_\ell$, together with a 
\textit{continuous} homomorphism 
\[
  \widetilde{\rho}^\prol:\widehat\Gamma \longrightarrow \cG^\prol(\Q_\ell)
\]
that lifts $\rho^\prol$.  
The group $\cG^\prol$ is as an extension of the reductive group $R$ 
by a prounipotent group $\U^\prol$, which is the unipotent radical of $\cG^\prol$:
\[
  1 \longrightarrow \U^\prol \longrightarrow \cG^\prol 
  \longrightarrow R \longrightarrow 1.
\]
\end{definition}
This continuous relative completion satisfies a universal property similar to that of relative completion, but every homomorphism is now required to be \textit{continuous} with respect to the given topology.

Suppose that $\widehat\Gamma$ comes from a discrete group, i.e., it is the profinite completion of a discrete group $\Gamma$. Suppose that $\rho:\Gamma\to R(\Q_\ell)$ is a continuous, Zariski dense homomorphism, so that $\rho^\prol:\widehat\Gamma\to R(\Q_\ell)$ is the continuous extension of $\rho$. The following result is stated in \cite[Thm 6.3]{hain_rational}.

\begin{proposition}
    Suppose $\rho$ and $\rho^\prol$ are given as above. If $\cG^\prol$ and $\tilde\rho:\Gamma\to\cG^\prol(\Q_\ell)$ is the completion of $\Gamma$ relative to $\rho$, then:
    \begin{enumerate}
        \item $\tilde\rho$ is continuous and thus induces a continuous homomorphism $\widetilde{\rho}^\prol:\widehat\Gamma\to\cG^\prol(\Q_\ell)$;
        \item $\cG^\prol$ and $\widetilde{\rho}^\prol$ form the continuous relative completion of $\widehat\Gamma$ with respect to $\rho^\prol$.
    \end{enumerate}
\end{proposition}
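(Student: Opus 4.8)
The plan is to prove part (1) first; part (2) is then purely formal. For (1) we must show that $\widetilde{\rho}\colon\G\to\cG^\prol(\Ql)$ is continuous for the profinite topology on $\G$, equivalently that it extends (necessarily uniquely) to a continuous homomorphism $\widehat\G\to\cG^\prol(\Ql)$. Since $\cG^\prol$ is the inverse limit of its finite-dimensional algebraic quotients and $\cG^\prol(\Ql)$ carries the inverse-limit topology, it suffices to show that for every finite-dimensional $\Ql$-representation $V$ of $\cG^\prol$, the action of $\G$ on $V$ through $\widetilde{\rho}$ extends to a continuous action of $\widehat\G$.

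Fix such a $V$. Using a Levi splitting $\cG^\prol\cong R\ltimes\U^\prol$, the prounipotent radical $\U^\prol$ acts unipotently on $V$, so $V$ carries a $\cG^\prol$-stable filtration $0=W_0\subset W_1\subset\cdots\subset W_m=V$ (for instance by iterated $\U^\prol$-invariants) whose graded quotients $\Gr^W_j V$ have trivial $\U^\prol$-action and hence are $R$-representations; on these $\G$ acts through $\rho$ followed by an algebraic representation of $R$, so continuously, as $\rho$ is continuous by hypothesis. One then propagates continuity up the filtration by induction on $m$: given $0\to V'\to V\to V''\to 0$ of $\G$-representations through $\widetilde{\rho}$ with the actions on $V'$ and $V''$ extending continuously to $\widehat\G$, the homomorphism $\G\to\mathrm{GL}(V)(\Ql)$ lands in the parabolic $P=\mathrm{Stab}_{\mathrm{GL}(V)}(V')$, which is an extension of $\mathrm{GL}(V')\times\mathrm{GL}(V'')$ by the vector group $\Hom(V'',V')$; after a linear splitting it is recorded by the already-continuous map to $\mathrm{GL}(V')(\Ql)\times\mathrm{GL}(V'')(\Ql)$ together with a $1$-cocycle $c\colon\G\to\Hom(V'',V')(\Ql)$ for the induced $\G$-action on $\Hom(V'',V')$. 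Thus everything reduces to the key claim: \emph{every $1$-cocycle $c$ of $\G$ with values in a finite-dimensional $\Ql$-vector space $M$ with continuous $\widehat\G$-action extends continuously to $\widehat\G$.} To prove this, observe that the continuous $\widehat\G$-action has relatively compact image, hence preserves a $\Z_\ell$-lattice $L\subset M$; enlarging $L$ by the $\Z_\ell$-spans of the $\G$-orbits of $c(\gamma_1),\dots,c(\gamma_r)$ for a finite generating set $\gamma_1,\dots,\gamma_r$ of $\G$ (here finite generation of $\G$ is used; it holds for all the mapping class groups occurring in this paper) yields a $\G$-stable lattice $L'$ with $c(\G)\subset L'$, by the cocycle identity. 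For each $n$, the reduction $c\bmod\ell^nL'\colon\G\to L'/\ell^nL'$ is a $1$-cocycle into a finite module; on the finite-index subgroup of $\G$ acting trivially on $L'/\ell^nL'$ it is a homomorphism to a finite group, so it has finite-index kernel, and it follows (using the cocycle identity again) that $c\bmod\ell^nL'$ factors through a finite quotient of $\G$ and hence extends to $\widehat\G$. Passing to the inverse limit over $n$ extends $c$ to a continuous $1$-cocycle on $\widehat\G$. (Concretely, this gives $H^1(\G,M)\cong H^1_\cts(\widehat\G,M)$, realized at the cocycle level.) Feeding this into the filtration argument, $\G$ acts continuously through $\widehat\G$ on every finite-dimensional representation of $\cG^\prol$, so $\widetilde{\rho}$ extends to a continuous homomorphism $\widetilde{\rho}^\prol\colon\widehat\G\to\cG^\prol(\Ql)$; this $\widetilde{\rho}^\prol$ lifts $\rho^\prol$ and is Zariski dense because its image contains $\widetilde{\rho}(\G)$. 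This establishes (1).

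For (2), note that $(\cG^\prol,\widetilde{\rho}^\prol)$ is a pro-algebraic extension of $R$ by the prounipotent group $\U^\prol$ together with a continuous, Zariski-dense lift of $\rho^\prol$; it remains to verify the universal property of the continuous relative completion. Let $G$ be a pro-algebraic extension of $R$ by a prounipotent group and $\psi\colon\widehat\G\to G(\Ql)$ a continuous homomorphism lifting $\rho^\prol$. Restricting $\psi$ to $\G$ gives a homomorphism $\psi|_\G\colon\G\to G(\Ql)$ lifting $\rho$, so the universal property of the relative completion of the discrete group $\G$ furnishes a unique morphism $f\colon\cG^\prol\to G$ over $R$ with $f\circ\widetilde{\rho}=\psi|_\G$. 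Then $f\circ\widetilde{\rho}^\prol$ and $\psi$ are continuous maps $\widehat\G\to G(\Ql)$ into a Hausdorff group that agree on the dense subgroup $\G$, hence coincide; and any $f'$ with $f'\circ\widetilde{\rho}^\prol=\psi$ satisfies $f'\circ\widetilde{\rho}=\psi|_\G$ and so equals $f$. Therefore $(\cG^\prol,\widetilde{\rho}^\prol)$ is the continuous relative completion of $\widehat\G$ with respect to $\rho^\prol$, which is (2).

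The genuine content is part (1), and within it the propagation of continuity through $0\to V'\to V\to V''\to 0$, i.e.\ the control of the unipotent part of $\widetilde{\rho}$: one must see that the relevant $1$-cocycles take values in a bounded subset of $\Hom(V'',V')(\Ql)$, and it is exactly here that finite generation of $\G$ and the reduction to finite coefficients are used. By contrast the reductive direction follows immediately from the hypothesis that $\rho$ is continuous, and the passage from (1) to (2) is formal.
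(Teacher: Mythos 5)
The paper does not actually prove this proposition: it is quoted from Hain (Thm.~6.3 of the cited reference), so there is no internal argument to compare yours against, and I can only judge your proof on its own terms; it is essentially correct. Your route --- checking continuity on each finite-dimensional representation of $\cG^\prol$, filtering by iterated $\U^\prol$-invariants so that the graded pieces are handled by the continuity of $\rho$, reducing the off-diagonal blocks to the claim that every $1$-cocycle of $\Gamma$ in a finite-dimensional $\Q_\ell$-module with continuous $\widehat\Gamma$-action extends continuously (via a $\Gamma$-stable lattice and reduction mod $\ell^n$), and then the purely formal verification of the universal property in (2) --- is a sound, self-contained proof of the statement. Two caveats are worth recording. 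First, your argument genuinely uses that $\Gamma$ is finitely generated (to trap $c(\Gamma)$ in a $\Gamma$-stable lattice); this hypothesis does not appear in the proposition as stated here, but it is part of the hypotheses of the quoted theorem, it holds for every group to which the paper applies the result, and it cannot be dropped: for $\Gamma=\Q$ and $R$ trivial the relative (unipotent) completion over $\Q_\ell$ is $\mathbb{G}_a$ while $\widehat{\Q}$ is trivial, so $\tilde\rho$ is not continuous. Second, two minor expository points: in the mod-$\ell^n$ step the finite-index kernel you produce (inside the subgroup acting trivially on $L'/\ell^n L'$) need not be normal in $\Gamma$, so one should pass to its normal core before asserting that the reduced cocycle factors, as a map of sets, through a finite quotient of $\Gamma$; and the passage from continuity on every finite-dimensional representation to continuity of $\tilde\rho$ into $\cG^\prol(\Q_\ell)$ tacitly uses a faithful representation of each finite-dimensional quotient $G_\alpha$ of $\cG^\prol$ and the fact that $G_\alpha(\Q_\ell)$ is closed in the corresponding general linear group, which is standard but should be said. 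Neither point affects correctness.
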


\begin{remark}
    This is a generalization of \textcolor{black}{\S\ref{cts unipt comp}}, since $\cG^\prol$ can be obtained from $\cG_\Q$ by base change. The proposition says that base change is compatible with the continuous relative completion. More precisely, the relative completion of the discrete group $\Gamma$ base changed from $\Q$ to $\Ql$ is the same as the continuous relative completion of the profinite group $\widehat{\Gamma}$ over $\Ql$. 
\end{remark}

\subsection{Relative completion of the mapping class groups}
From now on, we will usually omit the subscript $_{/\Q}$ when the group/Lie algebra is defined over $\Q$. In view of the base change property, the results over $\Q$ can be easily extended to the case $F=\Ql$. 

Our primary case of interest is when the discrete group $\Gamma$ is the orbifold fundamental group of the moduli space $\M_{g,n}$, viewed as an analytic orbifold $\M^\an_{g,n}$.  This group is naturally isomorphic to the mapping class group $\Gamma_{g,n}$.  Since the unipotent completion of $\Gamma_{g,n}$ is trivial, it is natural in this setting to consider its relative completion instead.

\begin{example}[Relative completions of $\pi_1^\orb(\M^\an_{g,n})$ and $\pi_1^\orb(\cC^\an_{g,n})$]
For the universal curve 
\[
  \pi_{g,n}\colon \cC^\an_{g,n} \longrightarrow \M^\an_{g,n},
\]
there is a homotopy exact sequence of orbifold fundamental groups
\[
  1 \longrightarrow \pi_1^\top(C)
    \longrightarrow \pi_1^\orb(\cC^\an_{g,n})
    \xrightarrow{\ \pi_{g,n\ast}\ } 
    \pi_1^\orb(\M^\an_{g,n})
    \longrightarrow 1 .
\]
Let $\mathbb{H}_\Q := R^1\pi_{g,n\ast}\Q(1)$ denote the local system over $\M_{g,n}^\an$ whose fiber is
\[
  H := H^1(C,\Q(1)) \cong H_1(C,\Q).
\]
It underlies a variation of Hodge structure (VHS). 
The monodromy action induces a Zariski-dense representation
\[
  \rho_{g,n}\colon \pi_1^\orb(\M^\an_{g,n}) \longrightarrow \Sp(H),
\]
which agrees with the symplectic representation 
$\rho_{g,n}\colon \Gamma_{g,n}\to \Sp(H_1(\Sigma_g,\Z))$
from \S\ref{symp rep of mcg}, once we identify 
$\pi_1^\orb(\M^\an_{g,n}) \cong \Gamma_{g,n}$.

Denote by $\cG_{g,n}$ the relative completion of $\pi_1^\orb(\M^\an_{g,n})$ with respect to $\rho_{g,n}$, and by $\U_{g,n}$ its unipotent radical.  
This yields the exact sequence
\[
  1 \longrightarrow \U_{g,n} \longrightarrow \cG_{g,n} \longrightarrow \Sp(H) \longrightarrow 1.
\]
Since $\pi_{g,n\ast}$ is surjective, the composite
\[
  \pi_1^\orb(\cC^\an_{g,n})
    \xrightarrow{\ \pi_{g,n\ast}\ } \pi_1^\orb(\M^\an_{g,n})
    \xrightarrow{\ \rho_{g,n}\ } \Sp(H)
\]
is also Zariski dense.  
Let $\cG_{\cC_{g,n}}$ be the relative completion of $\pi_1^\orb(\cC^\an_{g,n})$ with respect to this composite, and $\U_{\cC_{g,n}}$ its unipotent radical.  
We denote their Lie algebras by
\[
  \g_{g,n}:=\Lie(\cG_{g,n}),\quad 
  \u_{g,n}:=\Lie(\U_{g,n}),\quad
  \g_{\cC_{g,n}}:=\Lie(\cG_{\cC_{g,n}}),\quad 
  \u_{\cC_{g,n}}:=\Lie(\U_{\cC_{g,n}}).
\]
\end{example}

We may apply the same construction in the case of unordered marked points.
\begin{example}[Relative completions in the unordered case]\label{rel comp in unordered case}
Denote the relative completions of 
$\pi_1^\orb(\M^\an_{g,[n]})$ and $\pi_1^\orb(\cC^\an_{g,[n]})$
by $\cG_{g,[n]}$ and $\cG_{\cC_{g,[n]}}$, and their unipotent radicals by 
$\U_{g,[n]}$ and $\U_{\cC_{g,[n]}}$, respectively.  
These fit into the exact sequences
\[
  \begin{aligned}
  1 &\longrightarrow \U_{g,[n]} 
      \longrightarrow \cG_{g,[n]} 
      \longrightarrow \Sp(H) 
      \longrightarrow 1,\\[4pt]
  1 &\longrightarrow \U_{\cC_{g,[n]}} 
      \longrightarrow \cG_{\cC_{g,[n]}} 
      \longrightarrow \Sp(H) 
      \longrightarrow 1.
  \end{aligned}
\]
We denote the corresponding Lie algebras by
\[
  \g_{g,[n]} := \Lie(\cG_{g,[n]}),\,
  \u_{g,[n]} := \Lie(\U_{g,[n]}),\,
  \g_{\cC_{g,[n]}} := \Lie(\cG_{\cC_{g,[n]}}), \,
  \u_{\cC_{g,[n]}} := \Lie(\U_{\cC_{g,[n]}}).
\]
\end{example}

In the same manner, we can describe the relative completions associated to the hyperelliptic moduli spaces and their universal curves.
\begin{example}[Relative completions in the hyperelliptic cases]
For the universal hyperelliptic curve 
\[
  \pi^\hyp_{g,n}\colon \cC^\an_{\H_{g,n}} \longrightarrow \H^\an_{g,n},
\]
there is a homotopy exact sequence
\[
  1 \longrightarrow \pi_1^\top(C)
    \longrightarrow \pi_1^\orb(\cC^\an_{\H_{g,n}})
    \xrightarrow{\ \pi^\hyp_{g,n\ast}\ } 
    \pi_1^\orb(\H^\an_{g,n})
    \longrightarrow 1 .
\]
Let $\mathbb{H}_\Q := R^1\pi^\hyp_{g,n\ast}\Q(1)$ denote the local system with fiber 
\[
  H := H^1(C,\Q(1)) \cong H_1(C,\Q).
\]
We use the same notation $\mathbb{H}_\Q$ for its pullback of $R^1\pi_{g,n\ast}\Q(1)$ to $\H_{g,n}^\an$.
The monodromy action induces a Zariski-dense representation
\[
  \rho^\hyp_{g,n}\colon \pi_1^\orb(\H^\an_{g,n}) \longrightarrow \Sp(H),
\]
which agrees with the representation 
$\rho^\hyp_{g,n}\colon \Delta_{g,n}\to \Sp(H_1(\Sigma_g,\Z))$ after identifying 
$\pi_1^\orb(\H^\an_{g,n})\cong \Delta_{g,n}$.

Denote by $\cD_{g,n}$ the relative completion of $\pi_1^\orb(\H^\an_{g,n})$ with respect to $\rho^\hyp_{g,n}$ and by $\cV_{g,n}$ its unipotent radical:
\[
  1 \longrightarrow \cV_{g,n} \longrightarrow \cD_{g,n} \longrightarrow \Sp(H) \longrightarrow 1.
\]
The induced composite
\[
  \pi_1^\orb(\cC^\an_{\H_{g,n}})
    \xrightarrow{\ \pi^\hyp_{g,n\ast}\ } \pi_1^\orb(\H^\an_{g,n})
    \xrightarrow{\ \rho^\hyp_{g,n}\ } \Sp(H)
\]
is also Zariski dense, giving the relative completion $\cD_{\cC_{g,n}}$ with unipotent radical $\cV_{\cC_{g,n}}$.  
Their Lie algebras are denoted by
\(\d_{g,n}, \v_{g,n}, \d_{\cC_{g,n}}, \v_{\cC_{g,n}}\).

For the unordered case, we denote the corresponding completions and unipotent radicals by
\[
  \cD_{g,[n]},\ \cV_{g,[n]},\ \cD_{\cC_{g,[n]}},\ \cV_{\cC_{g,[n]}},
\]
fitting into the exact sequences
\[
  1 \to \cV_{g,[n]} \to \cD_{g,[n]} \to \Sp(H) \to 1, 
  \qquad
  1 \to \cV_{\cC_{g,[n]}} \to \cD_{\cC_{g,[n]}} \to \Sp(H) \to 1,
\]
with Lie algebras 
\(\d_{g,[n]}, \v_{g,[n]}, \d_{\cC_{g,[n]}}, \v_{\cC_{g,[n]}}\).
\end{example}

The constructions for the full moduli space and the hyperelliptic locus are compatible by the naturality of relative completion, as summarized below.

\begin{remark}\label{cd}
By the naturality of relative completion, the homomorphisms between the orbifold fundamental groups induced by the closed immersion $\H_{g,n}\hookrightarrow \M_{g,n}$
\[
  \pi_1^\orb(\H^\an_{g,n}) \longrightarrow \pi_1^\orb(\M^\an_{g,n})
  \qquad \text{and} \qquad
  \pi_1^\orb(\cC^\an_{\H_{g,n}}) \longrightarrow \pi_1^\orb(\cC^\an_{g,n})
\]
induce corresponding morphisms between their relative completions.
Consequently, the following diagram with exact rows commutes:
\begin{center}
\begin{tikzcd}
  1 \ar[r] & \U_{g,n} \ar[r] & \cG_{g,n} \ar[r] & \Sp(H) \ar[r] \ar[d, equal] & 1 \\
  1 \ar[r] & \cV_{g,n} \ar[r] \ar[u] & \cD_{g,n} \ar[r] \ar[u] & \Sp(H) \ar[r] & 1
\end{tikzcd}
\end{center}
Passing to Lie algebras yields an analogous commutative diagram
\begin{center}
\begin{tikzcd}
  0 \ar[r] & \u_{g,n} \ar[r] & \g_{g,n} \ar[r] & \sp(H) \ar[r] \ar[d, equal] & 0 \\
  0 \ar[r] & \v_{g,n} \ar[r] \ar[u] & \d_{g,n} \ar[r] \ar[u] & \sp(H) \ar[r] & 0
\end{tikzcd}
\end{center}
with exact rows.
Each Lie algebra in the diagram is equipped with a natural mixed Hodge structure (MHS), 
since the Lie algebras of relative completions carry canonical MHS over $\Q$
(see \cite[Thm.~13.1]{hain_hodge_rel}).  
Under this structure, all induced homomorphisms between the Lie algebras above 
are morphisms of MHSs (see \cite[Thm.~13.12]{hain_hodge_rel}).
\end{remark}



\subsection{Continuous relative completions over \texorpdfstring{$\Q_\ell$}{Ql}}

Let $\bar k$ be an algebraic closure of a field $k$ of characteristic zero.  
Let $\mathbb{H}_{\Q_\ell} := R^1\pi_{g,n\ast}\Q_\ell(1)$, and let 
$H_{\Q_\ell} := H^1_{\mathrm{\acute{e}t}}(C,\Q_\ell(1))$ denote the fiber of $\mathbb{H}_\Ql$ over a geometric point of $\M_{g,n/\bar k}$.  
The monodromy representation
\[
  \rho_{g,n}^{(\ell)}:
  \pi_1^{\alg}(\M_{g,n/\bar k})
  \longrightarrow \Sp(H_{\Q_\ell})
\]
is continuous and Zariski dense.  
We denote by
\[
  1\longrightarrow \U_{g,n}^{(\ell)}\longrightarrow 
  \cG_{g,n}^{(\ell)}\longrightarrow 
  \Sp(H_{\Q_\ell})\longrightarrow 1
\]
the \emph{continuous relative completion} of 
$\pi_1^{\alg}(\M_{g,n/\bar k})$ with respect to $\rho_{g,n}^{(\ell)}$,
and by $\U_{g,n}^{(\ell)}$ its prounipotent radical.  
Analogously, for the universal curve $\cC_{g,n/\bar k}$, we write
\[
  1\longrightarrow \U_{\cC_{g,n}}^{(\ell)}\longrightarrow 
  \cG_{\cC_{g,n}}^{(\ell)}\longrightarrow 
  \Sp(H_{\Q_\ell})\longrightarrow 1.
\]

The same construction applies to the unordered and hyperelliptic cases, yielding
\[
  1\to \U_{g,[n]}^{(\ell)}\to \cG_{g,[n]}^{(\ell)}\to \Sp(H_{\Q_\ell})\to 1,
  \qquad
  1\to \cV_{g,[n]}^{(\ell)}\to \cD_{g,[n]}^{(\ell)}\to \Sp(H_{\Q_\ell})\to 1,
\]
and similarly for their universal curves.  

By the base change theorem of Hain~\cite[Thm.~6.3]{hain_rational},
these continuous completions are obtained from the rational ones by extension of scalars:
\[
  \cG_{g,n}^{(\ell)} \cong \cG_{g,n}\otimes_\Q\Q_\ell,
  \qquad
  \cD_{g,n}^{(\ell)} \cong \cD_{g,n}\otimes_\Q\Q_\ell.
\]

We denote their Lie algebras by
\[
  \g_{g,n}^{(\ell)} := \Lie(\cG_{g,n}^{(\ell)}),\quad 
  \u_{g,n}^{(\ell)} := \Lie(\U_{g,n}^{(\ell)}),\quad
  \d_{g,n}^{(\ell)} := \Lie(\cD_{g,n}^{(\ell)}),\quad
  \v_{g,n}^{(\ell)} := \Lie(\cV_{g,n}^{(\ell)}),
\]
and similarly for the completions associated to the universal curves and unordered cases.

\subsection{Generators of Lie algebras of relative completions}

Recall that the structure of relative completion is determined by the action of the reductive group $R$ on the Lie algebra $\u$ of its unipotent radical. This amounts to give a presentation of $\u$ as $R$-modules. In particular, the presentation of its generator $H_1(\u)$ as $R$-modules can be deduced from Theorem \ref{str}. We now focus on the presentations of generators of various Lie algebras appeared in our examples before.

Note that these Lie algebras are equipped with a natural weight filtration given by Hodge theory (see \cite[Thm.~13.1]{hain_hodge_rel}). This weight filtration satisfies nice exactness properties, so there is no loss of information when replacing a filtered object by its associated graded. More precisely, we will use the following lemma.

\begin{lemma}\label{natural weight splitting}
    The Lie algebras $\u=\p_{g,n}$, $\p_{g,[n]}$, $\u_{g,n}$, $\u_{g,[n]}$, $\u_{\cC_{g,n}}$, $\u_{\cC_{g,[n]}}$, $\v_{g,n}$, $\v_{g,[n]}$, $\v_{\cC_{g,n}}$, $\v_{\cC_{g,[n]}}$ are equipped with a natural weight filtration $W_\bullet$. There is a natural isomorphism 
    $$\u\cong\textcolor{black}{\prod_{m\leq -1}}\Gr^W_m\u$$
    for each of these Lie algebras.
\end{lemma}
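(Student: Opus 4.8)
The plan is to show that each of these pronilpotent Lie algebras is \emph{complete} with respect to its lower central series and that its weight filtration is cofinal with the lower central series, so that the exponential/logarithm identification of a complete pronilpotent Lie algebra with its associated graded applies. Concretely, each $\u$ on the list is the Lie algebra of the prounipotent radical of a relative completion (or, in the case $\p_{g,n}, \p_{g,[n]}$, of a unipotent completion), hence is a pronilpotent Lie algebra that is \emph{weighted}: by \cite[Thm.~13.1]{hain_hodge_rel} it carries a natural MHS whose weight filtration $W_\bullet\u$ is a filtration by Lie ideals with $W_{-1}\u=\u$, $W_0\u=0$, and $\Gr^W_\bullet\u$ finite-dimensional in each degree. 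The key point, which holds for the Lie algebra of the prounipotent radical of a relative completion of the groups in question, is that the weight and lower central series filtrations are \emph{comparable and cofinal}: one has $W_{-m}\u \supseteq \u^{(m)}$ (the $m$-th term of the lower central series) because $W_\bullet$ is a Lie filtration with $\u = W_{-1}\u$, and conversely $W_{-m}\u \subseteq \u^{(c(m))}$ for some function $c(m)\to\infty$, since the associated graded $\Gr^W_\bullet\u$ is generated in weights $\le -1$ (indeed the negative-weight generation is exactly the content of the minimal-presentation discussion preceding Theorem~\ref{str}, together with the fact that $H_1(\u)$ has strictly negative weights for these groups).

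First I would record that $\u \cong \varprojlim_m \u/W_{-m}\u$, i.e.\ that $\u$ is \emph{complete} for the weight filtration. This is part of the construction of relative completion: the prounipotent radical $\U$ is by definition a projective limit of unipotent algebraic groups, and the Hodge-theoretic weight filtration on $\u = \Lie\U$ refines the defining tower, so $\u$ is a complete filtered Lie algebra with $\bigcap_m W_{-m}\u = 0$. Then I would invoke the standard fact that a complete pronilpotent (in fact just complete filtered, with finite-dimensional graded pieces and the filtration by ideals) Lie algebra over a field of characteristic $0$ admits a compatible splitting of its weight filtration: because $W_\bullet\u$ is a filtration by Lie ideals, each successive extension
\[
0 \to W_{-m-1}\u/W_{-m-2}\u \to W_{-m}\u/W_{-m-2}\u \to W_{-m}\u/W_{-m-1}\u \to 0
\]
is a central extension of finite-dimensional nilpotent Lie algebras, and one builds the splitting $\Gr^W_\bullet\u \xrightarrow{\sim} \u$ degree by degree, taking a limit. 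Uniqueness up to inner automorphism (or exact functoriality) is what makes the splitting \emph{natural}; for the Lie algebra of a relative completion this naturality comes from the $\Sp(H)$-action: $W_\bullet\u$ is a filtration by $\Sp(H)$-submodules and $\Sp(H)$ is reductive, so one may choose $\Sp(H)$-equivariant splittings at each stage, giving a canonical (functorial) isomorphism $\u \cong \prod_{m\le -1}\Gr^W_m\u$ as pro-$\Sp(H)$-modules and as Lie algebras.

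The step I expect to be the main obstacle is verifying \emph{cofinality} of the weight filtration with the tower defining the prounipotent radical—equivalently, that $W_{-m}\u$ actually shrinks to $0$ and that the graded pieces are finite-dimensional—uniformly across all ten Lie algebras on the list, including the curve versions $\u_{\cC_{g,n}}, \u_{\cC_{g,[n]}}$ and the hyperelliptic versions. For the unipotent completions $\p_{g,n}, \p_{g,[n]}$ this is classical (Morgan, Hain), and for $\u_{g,n}$ it is in Hain's work on the Torelli Lie algebra; for the remaining ones I would argue by naturality, using the right-exactness and naturality of relative completion (Propositions~3.5--3.6 quoted above) together with the homotopy exact sequences relating $\cC$ to $\M$ and $\H$ to $\M$, to reduce finite-dimensionality of each graded piece to the already-known cases. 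Once each graded piece $\Gr^W_m\u$ is finite-dimensional and $W_\bullet$ exhausts $\u$ with trivial intersection, the abstract splitting argument above applies verbatim, and assembling over all ten Lie algebras completes the proof.
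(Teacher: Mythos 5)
The step that actually carries the content of the lemma is the one you treat as routine. It is not a standard fact that a complete pronilpotent Lie algebra with a filtration by ideals, finite-dimensional graded pieces and trivial intersection is isomorphic to the product of its graded pieces: if it were, every finite-dimensional nilpotent Lie algebra would be isomorphic to the associated graded of its lower central series (take $W_{-m}=L^m$), which is false in general --- for instance a characteristically nilpotent Lie algebra admits no grading compatible with its lower central series, since such a grading would produce a nonzero semisimple derivation. Your degree-by-degree construction does not repair this: the two-step extensions you write down are indeed central, but central extensions of nilpotent Lie algebras need not split, and what you must produce is in any case a Lie algebra homomorphism out of $\prod_{m}\Gr^W_m\u$, i.e.\ bracket-compatible lifts of all graded pieces simultaneously, whose existence is obstructed in general. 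So the splitting is genuinely Hodge-theoretic: the bracket is a morphism of MHS, the weight filtration is (a reindexing of) the lower central series, and the isomorphism $\u\cong\prod_{m\le -1}\Gr^W_m\u$ is imported by the paper from Hain's Hodge theory of relative completion and the structure theory of these specific Lie algebras, whereas your proposal uses the MHS only to define $W_\bullet$ and then discards it exactly where it is needed.

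The naturality claim has a second gap. Averaging over the reductive group $\Sp(H)$ can make a chosen splitting equivariant, but it does not make it canonical or functorial: distinct weight-graded pieces of these Lie algebras share irreducible $\Sp(H)$-constituents (the representations $H$ and $\Lambda^3_0H$ recur in lower weights), so $\Hom_{\Sp}(\Gr^W_a\u,\Gr^W_b\u)\ne 0$ for suitable $a\ne b$, two equivariant splittings can genuinely differ, and nothing in your argument singles one out or shows compatibility with the maps for which the lemma is later used, namely the surjections $\u_{\cC_{g,n}}\to\u_{g,n}$ and $\v_{\cC_{g,n}}\to\v_{g,n}$ and base change to $\Q_\ell$; that compatibility has to come from the Hodge-theoretic structure (e.g.\ strictness of morphisms of MHS and the canonical splittings available for these completions), not from reductivity alone. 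Finally, the point you flag as the main obstacle --- cofinality of $W_\bullet$ with the defining tower and finite-dimensionality of the graded pieces --- is the easy part, already contained in Theorem~\ref{str} and the results quoted in the paper; the existence and naturality of a Lie-algebra-level splitting is where the real work lies.
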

The weight filtration $W_\bullet$ on these Lie algebras are essentially the lower central series $L^\bullet$. In particular, these weight filtrations are defined over $\Q$, see \cite[p.28]{hain_relwtfilt}.

\begin{proposition}[{\cite[Props.~2.1, 4.6; Lem.~4.7]{hain_infpretor}}]
If $g\geq 3$, for the Lie algebras \(\u=\p_{g,n}\), \(\u_{g,n}\),
the weight filtration on \(\u\) satisfies
\[
\u = W_{-1}\u,
\qquad
L^l\u = W_{-l}\u,
\]
where \(L^l\u\) denotes the \(l\)-th term of the lower central series. Consequently, there is a canonical isomorphism 
$$H_1(\u)\cong\Gr^W_{-1}\u.$$
\end{proposition}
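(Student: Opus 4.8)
\emph{Proof sketch.} The plan is to reduce the whole proposition to a single assertion — that $H_1(\u)$ is pure of weight $-1$ — dispose of all the Lie‑algebra bookkeeping formally via strictness of the weight filtration, and then prove that purity separately for $\p_{g,n}$ and for $\u_{g,n}$. For the reduction, recall that the bracket on $\u$ is a morphism of mixed Hodge structures (these Lie algebras carry natural mixed Hodge structures, as noted above), hence strict for $W_\bullet$ and satisfying $[W_{-a}\u,W_{-b}\u]\subseteq W_{-a-b}\u$; starting from $L^1\u=\u$ this gives $L^l\u\subseteq W_{-l}\u$ for all $l$, provided $\u=W_{-1}\u$. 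For the reverse inclusions it is enough that $\Gr^W_\bullet\u$ be generated as a Lie algebra by $\Gr^W_{-1}\u$: then $\Gr^W(W_{-l}\u)=\bigoplus_{m\ge l}\Gr^W_{-m}\u=L^l(\Gr^W_\bullet\u)=\Gr^W(L^l\u)$, the last equality by strictness, so $L^l\u$ and $W_{-l}\u$ are comparable with identical associated gradeds, and Lemma~\ref{natural weight splitting} ($\u\cong\prod_{m\le-1}\Gr^W_m\u$) forces $L^l\u=W_{-l}\u$. The clause $H_1(\u)\cong\Gr^W_{-1}\u$ is then automatic: $\Gr^W_{-1}\u=W_{-1}\u/W_{-2}\u=\u/L^2\u=H_1(\u)$. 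Finally, the two inputs just used — $\u=W_{-1}\u$ and generation of $\Gr^W_\bullet\u$ in weight $-1$ — are both consequences of purity of $H_1(\u)$ in weight $-1$: by strictness $\Gr^W H_1(\u)=H_1(\Gr^W_\bullet\u)$, so purity makes $H_1(\Gr^W_\bullet\u)$ concentrated in weight $-1$, which says exactly that $\Gr^W_\bullet\u$ is generated in weight $-1$ and that $\Gr^W_0\u=0$, i.e.\ $\u=W_{-1}\u$. So everything reduces to purity of $H_1(\u)$.

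\emph{The case $\u=\p_{g,n}$.} Since $H_1(\p_{g,n})\cong H_1(F_n(C),\Q)$ as mixed Hodge structures, and $H_1(F_n(C),\Q)$ is the dual of $H^1(F_n(C),\Q)$ with the dual mixed Hodge structure, I must show $H^1(F_n(C),\Q)$ is pure of weight $1$. Writing $F_n(C)=C^n\setminus\bigcup_{i<j}\Delta_{ij}$ as the complement of a simple normal crossings divisor in the smooth proper variety $C^n$, the residue exact sequence reads
\[
0\longrightarrow H^1(C^n,\Q)\longrightarrow H^1(F_n(C),\Q)\xrightarrow{\ \mathrm{Res}\ }\bigoplus_{i<j}H^0(\Delta_{ij},\Q)(-1)\xrightarrow{\ \gamma\ }H^2(C^n,\Q),
\]
with $\gamma$ sending the generator of the $(i,j)$‑summand to the cycle class $[\Delta_{ij}]$. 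For $g\ge1$ the class $[\Delta_{ij}]$ has nonzero component in the Künneth summand $H^1(C)_{(i)}\otimes H^1(C)_{(j)}$ of $H^2(C^n,\Q)$, and these summands are distinct for distinct pairs, so the classes $[\Delta_{ij}]$ are linearly independent, $\gamma$ is injective, $\mathrm{Res}=0$, and $H^1(F_n(C),\Q)=H^1(C^n,\Q)$ is pure of weight $1$. (Only $g\ge1$ is used here.)

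\emph{The case $\u=\u_{g,n}$, and the main difficulty.} Here I would invoke Theorem~\ref{str} for $\G_{g,n}=\pi_1^\orb(\M^\an_{g,n})$ with $R=\Sp(H)$ — whose irreducibles are absolutely irreducible — to get an $\Sp(H)$‑module isomorphism, compatible with weight filtrations by naturality of the mixed Hodge structures,
\[
H^1(\u_{g,n})\;\cong\;\bigoplus_\alpha H^1(\G_{g,n},V_\alpha)\otimes V_\alpha^{*},
\]
the sum over irreducible $\Sp(H)$‑modules $V_\alpha$. If $V_\alpha$ has highest weight the partition $\lambda_\alpha$ with $|\lambda_\alpha|=d_\alpha$, then the corresponding variation of Hodge structure $\mathbb{V}_\alpha$ on $\M^\an_{g,n}$ is pure of weight $-d_\alpha$, and since $\M_{g,n}$ is a smooth Deligne--Mumford stack, $H^1(\G_{g,n},V_\alpha)=H^1(\M^\an_{g,n},\mathbb{V}_\alpha)$ has weights in $\{1-d_\alpha,\,2-d_\alpha\}$; hence $H^1(\u_{g,n})$ has weights in $\{1,2\}$ and $H_1(\u_{g,n})$ weights in $\{-1,-2\}$. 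Purity in weight $-1$ is thus equivalent to the vanishing of the top‑weight piece $\Gr^W_{2-d_\alpha}H^1(\G_{g,n},V_\alpha)$ for every $\alpha$, and this is exactly the content of Hain's computation \cite[Props.~2.1,~4.6; Lem.~4.7]{hain_infpretor}: for $g\ge3$, Johnson's theorems on the abelianizations of the Torelli groups $T_{g,n}$ make $H^1(\G_{g,n},V_\alpha)$ vanish for all but a short explicit list of $\alpha$, and a direct Hodge‑theoretic analysis of the survivors shows each is pure of the minimal weight $1-d_\alpha$. I expect this last step to be the only real obstacle — the reductions above are formal, but this purity is not, and it fails for $g=2$, where $H_1(T_2,\Q)$ is already infinite dimensional, so the hypothesis $g\ge3$ must enter essentially. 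Granting it, $H^1(\u_{g,n})$ is pure of weight $1$, hence $H_1(\u_{g,n})$ is pure of weight $-1$, and the reduction above concludes the proof.
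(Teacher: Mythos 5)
Your proposal is consistent with the paper, which in fact gives no proof of this proposition at all: it is imported directly from Hain \cite{hain_infpretor}, the very results you invoke at the decisive step. Your formal reduction is sound: strictness of $W_\bullet$, the fact that the bracket is a morphism of MHS, and Lemma~\ref{natural weight splitting} do reduce everything to purity of $H_1(\u)$ in weight $-1$ (note only that $\u=W_{-1}\u$ is already contained in Lemma~\ref{natural weight splitting}, so that clause comes for free). Your treatment of $\p_{g,n}$ is correct in substance, but the union of the diagonals in $C^n$ is \emph{not} a simple normal crossings divisor once $n\ge 3$ (three big diagonals meet along the small diagonal, of codimension $2$); the low-degree sequence you write remains valid for the complement of a union of smooth irreducible divisors, or one can argue via $\Gr^W_2H^1(F_n(C))\cong\ker\bigl(\bigoplus_{i<j}\Q\,[\Delta_{ij}](-1)\to H^2(C^n)\bigr)$, so the slip is harmless. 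For $\u_{g,n}$ you correctly isolate where $g\ge 3$ enters, but the substantive input — vanishing of $H^1(\G_{g,n},V_\alpha)$ for all but a short list of $\alpha$ and purity of the survivors — is deferred back to \cite{hain_infpretor}; since the paper itself only cites these results, this is acceptable, though it makes your argument a framing of the citation rather than an independent proof. For comparison, the route in \cite{hain_infpretor} (implicitly packaged here in Proposition~\ref{ab of ugn}) is more direct than your detour through Theorem~\ref{str} and weight bounds on $H^1(\M^\an_{g,n},\mathbb{V}_\alpha)$: right-exactness of relative completion applied to $1\to T_{g,n}\to\G_{g,n}\to\Sp(H_1(\Sigma_g,\Z))\to 1$ gives a surjection from the Malcev completion of the Torelli group onto $\U_{g,n}$, so $H_1(\u_{g,n})$ is a quotient of $H_1(T_{g,n})\otimes\Q\cong\Lambda^3_nH$ (Johnson, $g\ge 3$), which is pure of weight $-1$; both routes rest on Johnson's theorem and break down for $g=2$, exactly as you note.
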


\begin{remark}
    The  corollary shows that the generators of these Lie algebras is pure of weight $-1$.
\end{remark}
Let $\Lambda^3_0H$ be the representation defined as $(\Lambda^3H)(-1)/(q\wedge H)$, where $q:\Lambda^2H\to \Q(1)$ is the cup product, being viewed as an element in $(\Lambda^2H)(-1)$. It is pure of weight $-1$ and isomorphic to the irreducible symplectic representation corresponding to the partition 1+1+1. For \(v \in \Lambda^3 H\), let \(\bar v\) denote its image in \(\Lambda^3_0 H\).  
Define
\[
\Lambda^3_n H
  := \{(v_1,\ldots,v_n) \in (\Lambda^3 H)^n \mid 
      \bar v_1 = \cdots = \bar v_n\},
\]
which is isomorphic to
\[
\Lambda^3_0 H \oplus \bigoplus_{j=1}^n H_j,
\]
where each \(H_j\) is a copy of \(H\) corresponding to the \(j\)-th component. For brevity, we will often denote \(\bigoplus_{j=1}^n H_j\) by \(H^n\).
\begin{proposition}[{\cite[Prop.~4.6]{hain_infpretor}}]\label{ab of ugn}
If $g\geq 3$, then there is a natural isomorphism
$$
H_1(\u_{g,n})\cong \Lambda^3_nH.
$$
\end{proposition}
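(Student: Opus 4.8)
The plan is to invoke Theorem~\ref{str} with $R=\Sp(H)$. Since $\Sp(H)$ is split over $\Q$, all of its irreducible representations are absolutely irreducible, so for each irreducible $\Sp(H)$-module $V_\alpha$ the theorem gives a natural isomorphism $\Hom_{\Sp(H)}(H_1(\u_{g,n}),V_\alpha)\cong H^1(\G_{g,n},V_\alpha)$; equivalently, the multiplicity of $V_\alpha$ in the $\Sp(H)$-module $H_1(\u_{g,n})$ equals $\dim_\Q H^1(\G_{g,n},V_\alpha)$. It therefore suffices to prove, for $g\geq3$, that this dimension equals $n$ when $V_\alpha=H$, equals $1$ when $V_\alpha=\Lambda^3_0H$, and vanishes for every other irreducible (in particular for the trivial one): this identifies $H_1(\u_{g,n})$ with $H^{\oplus n}\oplus\Lambda^3_0H$, which is exactly $\Lambda^3_nH$.

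To compute these cohomology groups I would use the Birman exact sequence attached to the forgetful fibration $\M_{g,n}\to\M_g$, whose fiber over $[C]$ is the ordered configuration space $F_n(C)$:
\[
1\longrightarrow\pi_1^\top(F_n(C))\longrightarrow\G_{g,n}\longrightarrow\G_g\longrightarrow1.
\]
Since $\pi_1^\top(F_n(C))$ acts trivially on $V_\alpha$, the Hochschild--Serre spectral sequence produces a low-degree exact sequence, compatible with the natural mixed Hodge structures (cf.\ \cite{hain_hodge_rel}):
\[
0\to H^1(\G_g,V_\alpha)\to H^1(\G_{g,n},V_\alpha)\to\Hom_{\G_g}\!\big(H_1(F_n(C),\Q),V_\alpha\big)\xrightarrow{\ d_2\ }H^2(\G_g,V_\alpha).
\]
Two inputs are needed. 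First, the Gysin (residue) sequence for the open immersion $F_n(C)\hookrightarrow C^n$ gives $H^1(F_n(C),\Q)\cong H^1(C,\Q)^{\oplus n}$, because the diagonal classes $[\Delta_{ij}]$ are linearly independent in $H^2(C^n,\Q)$: their K\"unneth $(1,1)$-components lie in the pairwise distinct summands $H^1(C_i)\otimes H^1(C_j)$. Hence $H_1(F_n(C),\Q)\cong H^{\oplus n}$ as an $\Sp(H)$-module, pure of weight $-1$, and $\Hom_{\G_g}(H_1(F_n(C),\Q),V_\alpha)=\Hom_{\Sp(H)}(H^{\oplus n},V_\alpha)$ because $\rho_g$ is surjective. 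Second, by Hain's computation -- which rests on Johnson's theorem $H_1(T_g,\Q)\cong\Lambda^3_0H$ for $g\geq3$ together with Borel-type vanishing for the arithmetic group $\Sp_{2g}(\Z)$ -- one has, for $g\geq3$, that $H^1(\G_g,\Q)=0$, $H^1(\G_g,\Lambda^3_0H)\cong\Q$, and $H^1(\G_g,V_\alpha)=0$ for every other irreducible $V_\alpha$.

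Assembling: for an irreducible $V_\alpha\neq H$ one has $\Hom_{\Sp(H)}(H^{\oplus n},V_\alpha)=0$, so the exact sequence collapses to $H^1(\G_{g,n},V_\alpha)\cong H^1(\G_g,V_\alpha)$, which is $\Q$ for $V_\alpha=\Lambda^3_0H$ and $0$ for all other irreducibles, including the trivial one. For $V_\alpha=H$ we have $H^1(\G_g,H)=0$ and $\Hom_{\Sp(H)}(H^{\oplus n},H)\cong\Q^{\oplus n}$, so $H^1(\G_{g,n},H)=\ker(d_2)$; here $d_2$ is a morphism of mixed Hodge structures whose source is pure of weight $0$, whereas $H^2(\G_g,H)$ has weights $\geq1$ by Deligne's bounds for the weight $-1$ variation $\mathbb{H}$ on $\M_g$, so $d_2=0$ and $\dim H^1(\G_{g,n},H)=n$. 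Feeding these dimensions back through Theorem~\ref{str} yields $H_1(\u_{g,n})\cong H^{\oplus n}\oplus\Lambda^3_0H\cong\Lambda^3_nH$; moreover the $n$ copies of $H$ are the images of the $n$ point-pushing subgroups $\pi_1^\top(C)\subset\G_{g,n}$, which makes the isomorphism natural and $S_n$-equivariant.

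The main obstacle is the second input above, Hain's computation of $H^1(\G_g,V_\alpha)$ for all irreducibles in the range $g\geq3$; this is not formal -- it is in fact the $n=0$ case of the present proposition -- and it depends on Johnson's determination of the abelianization of the Torelli group together with vanishing theorems for $\Sp_{2g}(\Z)$. A lesser point needing care is the vanishing of the transgression $d_2$ for $V_\alpha=H$, handled here by weights (but also visible directly from the point-pushing subgroups). Alternatively one can organize the whole argument on the Lie-algebra side: relative completion turns the Birman sequence into $1\to\p_{g,n}\to\u_{g,n}\to\u_g\to1$ (right exactness from the right-exactness of relative completion, left exactness being Hain's exactness theorem for this fibration), and the five-term homology exact sequence, combined with $H_1(\u_g)\cong\Lambda^3_0H$, $H_1(\p_{g,n})\cong H^{\oplus n}$, and the vanishing -- forced by weights, since $H_2(\u_g)$ and $\u_g\otimes H_1(\p_{g,n})$ carry weights $\leq-2$ while $H^{\oplus n}$ is pure of weight $-1$ -- of the connecting map $H_2(\u_g)\to(H_1(\p_{g,n}))_{\u_g}$ and of the $\u_g$-action on $H_1(\p_{g,n})$, gives the same conclusion.
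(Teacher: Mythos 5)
The paper does not prove Proposition~\ref{ab of ugn} at all: it is quoted verbatim from Hain's work (the bracketed citation is the ``proof''), so there is no in-paper argument to compare against. Your reconstruction is essentially sound and is in the spirit of the cited source: reduce the $n$-pointed case to the unpointed one via the Birman sequence $1\to\pi_1^\top(F_n(C))\to\G_{g,n}\to\G_g\to1$, use $H_1(F_n(C),\Q)\cong H^{\oplus n}$, and convert multiplicities through Theorem~\ref{str}. The honest caveat you flag yourself is the real point: the two inputs $H^1(\G_g,\Lambda^3_0H)\cong\Q$ and $H^1(\G_g,V_\alpha)=0$ for all other irreducibles are exactly the $n=0$ case of the statement, resting on Johnson's theorem and vanishing for $\Sp_{2g}(\Z)$; quoting them is legitimate (they predate and underlie Hain's Prop.~4.6), but it means your argument is a reduction, not a self-contained proof, which is consistent with the paper's decision simply to cite. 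Two smaller points: the vanishing of $d_2$ for $V_\alpha=H$ needs the Hochschild--Serre/Leray sequence to be compatible with mixed Hodge structures (standard, but worth saying), and is also seen directly from the classes $\kappa_j$ of Hain--Matsumoto, which hit all of $\Hom_{\Sp}(H^{\oplus n},H)$; note too that $\pi_1^\top(F_n(C))$ really does act trivially on $V_\alpha$ because $\rho_{g,n}$ factors through $\G_g$, which your five-term sequence tacitly uses. Your alternative Lie-algebra-side argument (right exactness of relative completion applied to the Birman sequence, left exactness via center-freeness, then a weight argument on the five-term homology sequence) is exactly the mechanism the paper itself deploys later, in Corollary~\ref{H1 decomp for the univ curve} and Lemma~\ref{ab of u_{g,[n]}}, so it fits the paper's toolkit well.
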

For the universal curve case, we have
\begin{corollary}\label{H1 decomp for the univ curve}
If $g\geq 3$, there is an $\Sp(H)$-decomposition
\[
H_1(\u_{\cC_{g,n}})\cong H_0\oplus \Lambda^3_nH\cong \bigoplus_{j=0}^nH_j \oplus \Lambda^3_0H.
\]
\end{corollary}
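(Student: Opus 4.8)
The plan is to read off $H_1(\u_{\cC_{g,n}})$ from the homotopy exact sequence for the universal curve
\[
1 \longrightarrow \pi_1^\top(C)
  \longrightarrow \pi_1^\orb(\cC^\an_{g,n})
  \xrightarrow{\ \pi_{g,n\ast}\ }
  \pi_1^\orb(\M^\an_{g,n})
  \longrightarrow 1
\]
by applying the right exactness of relative completion (the Right exactness proposition) together with the structural Theorem \ref{str}. Since the fiber $\pi_1^\top(C) = \pi_1^\top(F_1(C))$ has unipotent completion $\cP_{g,1}$ with Lie algebra $\p_g$, and $\pi_{g,n\ast}$ is surjective with $\rho_{g,n}\circ\pi_{g,n\ast}$ Zariski dense in $\Sp(H)$, one obtains a right-exact sequence of Lie algebras
\[
\p_g \longrightarrow \u_{\cC_{g,n}} \longrightarrow \u_{g,n} \longrightarrow 0
\]
in the category of $\Sp(H)$-modules, which is compatible with the weight filtrations (these being the lower central series by the remark following Lemma \ref{natural weight splitting}). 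First I would take $H_1(-)$, i.e.\ pass to $\Gr^W_{-1}$, which by the preceding proposition equals $H_1$ for $\u_{g,n}$ and $\u_{\cC_{g,n}}$, and note $\Gr^W_{-1}\p_g \cong H$. This yields an exact sequence of $\Sp(H)$-modules
\[
H \longrightarrow H_1(\u_{\cC_{g,n}}) \longrightarrow H_1(\u_{g,n}) \longrightarrow 0,
\]
and by Proposition \ref{ab of ugn} the rightmost term is $\Lambda^3_n H \cong \Lambda^3_0 H \oplus H^n$.

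Next I would argue that the sequence is in fact left exact and split, so that $H_1(\u_{\cC_{g,n}}) \cong H \oplus \Lambda^3_n H$. The splitting comes from the existence of a section at the level of fundamental groups: the universal curve $\pi_{g,n}$ has the $n$ tautological sections (sending a curve with marked points to, say, its first marked point), which give group-theoretic sections of $\pi_{g,n\ast}$ and hence, by naturality, Lie algebra sections $\u_{g,n}\to\u_{\cC_{g,n}}$ in the category of $\Sp(H)$-modules. Composing with one of these identifies $\u_{g,n}$ as an $\Sp(H)$-module direct summand, so the map $H\to H_1(\u_{\cC_{g,n}})$ is injective and the sequence splits; this also matches the statement in the introduction that the universal curve induces an $\Sp(H)$-equivariant graded Lie algebra section. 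Alternatively, one can verify injectivity of $H\to H_1(\u_{\cC_{g,n}})$ cohomologically via Theorem \ref{str}: compute $H^1(\pi_1^\orb(\cC^\an_{g,n}), V_\alpha)$ for $V_\alpha$ in the relevant range using the Leray (Gysin/Hochschild--Serre) spectral sequence for $\pi_{g,n}$, whose $E_2$ page involves $H^\bullet(\pi_1^\orb(\M^\an_{g,n}), H^\bullet(C) \otimes V_\alpha)$, and read off that the extra copy of $H$ genuinely appears. Writing $H_0$ for the copy of $H$ coming from the fiber (the "extra" marked point $x_0$), and combining with $\Lambda^3_n H \cong \Lambda^3_0 H \oplus \bigoplus_{j=1}^n H_j$, gives precisely
\[
H_1(\u_{\cC_{g,n}}) \cong H_0 \oplus \Lambda^3_n H \cong \bigoplus_{j=0}^n H_j \oplus \Lambda^3_0 H.
\]

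The main obstacle is establishing left exactness, i.e.\ that no part of the fiber contribution $H = \Gr^W_{-1}\p_g$ dies in $\u_{\cC_{g,n}}$. Right exactness of relative completion is automatic, but the functor is not exact on the left in general, so one genuinely needs an input beyond formal nonsense — either the geometric sections of $\pi_{g,n}$ (cleanest, and directly available here), or a cohomological computation bounding $H^2$ via the second part of Theorem \ref{str}. I expect the section argument to suffice: the tautological section $s_1\colon \M_{g,n}\to\cC_{g,n}$, $(C;x_1,\dots,x_n)\mapsto(C;x_1,\dots,x_n;x_1)$, is a genuine morphism of stacks, hence induces a splitting of $\pi_{g,n\ast}$ on $\pi_1^\orb$, hence on relative completions by the naturality proposition, forcing the displayed $\Sp(H)$-module decomposition.
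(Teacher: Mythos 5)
The crucial step in your argument — left exactness, i.e.\ that the fiber contribution $H=\Gr^W_{-1}\p_g$ actually survives in $H_1(\u_{\cC_{g,n}})$ — is not established by the tautological sections, and this is a genuine gap. A section $s_1$ of $\pi_{g,n\ast}$ does give, via naturality, a splitting of $\u_{\cC_{g,n}}\to\u_{g,n}$, but a splitting of the quotient map tells you nothing about the kernel of $H_1(\u_{\cC_{g,n}})\to H_1(\u_{g,n})$: since $H$ is irreducible, right exactness only says this kernel is a quotient of $H$, hence $H$ or $0$, and the scenario in which the image of $\p_g$ dies (so that $H_1(\u_{\cC_{g,n}})\cong H_1(\u_{g,n})=\Lambda^3_nH$, with no extra $H_0$) is perfectly compatible with the existence of a section. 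Moreover, splitness of the surjection $H_1(\u_{\cC_{g,n}})\to H_1(\u_{g,n})$ as $\Sp(H)$-modules is automatic anyway, by semisimplicity of the category of $\Sp(H)$-representations, so the section buys nothing at the point where input is genuinely needed — exactly the point you yourself flag as "the main obstacle."

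The paper closes this gap differently: since $\p_g$ is center-free, $\adj\colon\p_g\to\Der\p_g$ is injective, and this map factors through $\u_{\cC_{g,n}}$ (via the monodromy/adjoint action of the completion on $\p_g$); exactness of $\Gr^W_\bullet$ then gives injectivity of $\Gr^W_{-1}\p_g\to\Gr^W_{-1}\u_{\cC_{g,n}}$, which, by purity of weight $-1$, is injectivity of $H_1(\p_g)\to H_1(\u_{\cC_{g,n}})$; the decomposition follows. Your alternative suggestion — showing via Theorem~\ref{str} and the Leray/Hochschild--Serre spectral sequence that $H^1(\pi_1^\orb(\cC^\an_{g,n}),H)$ has dimension $n+1$ — would indeed close the gap, but you leave it as an unexecuted sketch at precisely the decisive step; the quickest correct variant in this spirit is to observe that $\cC^\an_{g,n}\simeq\M^\an_{g,n+1}$, so $\u_{\cC_{g,n}}\cong\u_{g,n+1}$ and Proposition~\ref{ab of ugn} with $n+1$ marked points gives $H_1(\u_{\cC_{g,n}})\cong\Lambda^3_{n+1}H\cong\Lambda^3_0H\oplus\bigoplus_{j=0}^nH_j$ directly. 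As written, however, your primary argument does not prove the corollary.
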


\begin{proof}
Relative completion applied to the exact sequence
\[
1 \to \pi_1^\top(C)\to\pi_1^\orb(\cC^\an_{g,n})\to \pi_1^\orb(\M^\an_{g,n})\to 1
\]
yields the exact sequence of MHS
\[
0\to\p_g\to \u_{\cC_{g,n}}\to \u_{g,n}\to 0,
\]
where $\p_g =\p_{g,1}$. This gives the exact sequence 
\[
 H_1(\p_g)
  \longrightarrow H_1(\u_{\cC_{g,n}})
  \longrightarrow H_1(\u_{g,n})
  \longrightarrow 0,
\]
which implies that $H_1(\u_{\cC_{g,n}})$ is pure of weight -1.
Since \(\p_g\) is center-free, the adjoint representation 
\(\adj\colon \p_g \to \Der \p_g\) is injective.
Because \(\adj\) factors through \(\u_{\cC_{g,n}}\) and the functor 
\(\Gr^W_{-1}\) is exact, the composition
\[
\Gr^W_{-1}\p_g
  \longrightarrow \Gr^W_{-1}\u_{\cC_{g,n}}
  \longrightarrow \Gr^W_{-1}\Der \p_g
\]
is also injective.
Using the identifications 
\(H_1(\p_g) \cong \Gr^W_{-1}\p_g\) and 
\(H_1(\u_{\cC_{g,n}}) \cong \Gr^W_{-1}H_1(\u_{\cC_{g,n}})\cong\Gr^W_{-1}\u_{\cC_{g,n}}\),
we obtain the commutative diagram
\[
\xymatrix{
\Gr^W_{-1}\p_g \ar[r] \ar[d]^{\cong} &
\Gr^W_{-1}\u_{\cC_{g,n}} \ar[d]^{\cong} \\
H_1(\p_g) \ar[r] &
H_1(\u_{\cC_{g,n}}).
}
\]
Therefore, the sequence
\[
0 \longrightarrow H_1(\p_g)
  \longrightarrow H_1(\u_{\cC_{g,n}})
  \longrightarrow H_1(\u_{g,n})
  \longrightarrow 0
\]
is exact. Consequently, we obtain an \(\Sp(H)\)-decomposition
\[
H_1(\u_{\cC_{g,n}})
  \cong H_1(\p_g) \oplus H_1(\u_{g,n})
  \cong H_0 \oplus \Lambda^3_nH\cong\bigoplus_{j=0}^nH_j \oplus \Lambda^3_0H ,
\]
where \(H_0 := H_1(\p_g)\cong H\). 
\end{proof}
\begin{remark}
Since $H_1(\u_{\cC_{g,n}})$ is pure of weight $-1$, the weight filtration on $\u_{\cC_{g,n}}$ coincides with its lower central series.
\end{remark}
We now turn to the hyperelliptic case.  
A key fact is that \(H_1(\v_g)\) is pure of weight~\(-2\) 
(see~\cite[Prop.~6.9]{wat_rk_hyp_univ}), 
which leads to the following statement.
\begin{proposition}
The weight filtration on \(\v_g\) satisfies
\[
\v_g = W_{-2}\v_g,
\qquad
L^l\v_g = W_{-2l}\v_g,
\]
where \(L^l\v_g\) denotes the \(l\)-th term of the lower central series.
In particular,
\[
H_1(\v_g) \cong \Gr^W_{-2}\v_g.
\]
\end{proposition}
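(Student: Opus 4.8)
The plan is to run the argument used for $\u_{g,n}$ in \cite[Props.~2.1, 4.6; Lem.~4.7]{hain_infpretor}, with the sole new input being the fact recalled above, from \cite[Prop.~6.9]{wat_rk_hyp_univ}, that $H_1(\v_g)$ is pure of weight $-2$. Recall that $\v_g$ is the Lie algebra of the prounipotent radical of a relative completion, so by \cite[Thm.~13.1]{hain_hodge_rel} it carries a natural MHS compatible with its bracket: the weight filtration $W_\bullet$ is a filtration by closed ideals with $[W_a\v_g,W_b\v_g]\subseteq W_{a+b}\v_g$, one has $\v_g=W_{-1}\v_g$ since $\v_g$ is prounipotent, and $W_\bullet$ is separated and complete, so that $\v_g\cong\prod_m\Gr^W_m\v_g$ exactly as for the Lie algebras in Lemma~\ref{natural weight splitting}.

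The first step is to improve $\v_g=W_{-1}\v_g$ to $\v_g=W_{-2}\v_g$. Since $\overline{[\v_g,\v_g]}$ has weights $\le -2$, the abelianization map $\v_g\to H_1(\v_g)$ induces an injection $\Gr^W_{-1}\v_g\hookrightarrow\Gr^W_{-1}H_1(\v_g)$, and the target vanishes by purity; hence $\Gr^W_{-1}\v_g=0$, i.e.\ $W_{-1}\v_g=W_{-2}\v_g$. Next I would pass to $\Gr^W\v_g$: since $\Gr^W_\bullet$ is exact on MHS and morphisms of MHS are strict, $H_1(\Gr^W\v_g)\cong\Gr^W H_1(\v_g)$, which is concentrated in degree $-2$. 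Therefore $\Gr^W\v_g$ is topologically generated by $\Gr^W_{-2}\v_g$; it is concentrated in the even negative degrees $-2,-4,-6,\dots$; and a short induction on the degree gives $L^l(\Gr^W\v_g)=\bigoplus_{m\le -2l}\Gr^W_m\v_g$ for its lower central series. For $\v_g$ itself, the inclusion $L^l\v_g\subseteq W_{-2l}\v_g$ follows by induction from $\v_g=W_{-2}\v_g$ and compatibility of $W_\bullet$ with the bracket. For the reverse inclusion I would lift presentations across $\v_g\twoheadrightarrow\Gr^W\v_g$: given $x\in W_{-2l}\v_g$, express the class of $x$ in $\Gr^W_{-2l}\v_g$ as a sum of $l$-fold brackets of elements of $\Gr^W_{-2}\v_g=H_1(\v_g)$, lift each such bracket to an $l$-fold bracket in $\v_g$, and subtract, obtaining $W_{-2l}\v_g=L^l\v_g+W_{-2l-1}\v_g=L^l\v_g+W_{-2(l+1)}\v_g$ (using $\Gr^W_{-2l-1}\v_g=0$); iterating and invoking completeness together with closedness of the lower central series then gives $W_{-2l}\v_g=L^l\v_g$. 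The final assertion drops out: $\Gr^W_{-2}\v_g=W_{-2}\v_g/W_{-3}\v_g=\v_g/W_{-4}\v_g=\v_g/L^2\v_g=H_1(\v_g)$.

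The bookkeeping with the weight filtration is routine; the one step that needs genuine care is the transfer in the previous paragraph --- upgrading the cheap equality $L^l(\Gr^W\v_g)=\bigoplus_{m\le -2l}\Gr^W_m\v_g$ to the identity $L^l\v_g=W_{-2l}\v_g$ for $\v_g$. This is precisely a completeness statement about the pronilpotent Lie algebra $\v_g$ and is the content of \cite[Lem.~4.7]{hain_infpretor} in the non-hyperelliptic case; I expect it to carry over verbatim once the first step has placed the generators in weight $-2$. Beyond that, the proof uses nothing specific to the hyperelliptic locus except the purity of $H_1(\v_g)$, which is taken as input.
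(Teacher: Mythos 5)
Your proof is correct and follows exactly the route the paper intends: the paper gives no separate argument for this proposition, deducing it from the purity of $H_1(\v_g)$ in weight $-2$ (\cite[Prop.~6.9]{wat_rk_hyp_univ}) together with the same standard mechanism used for $\u_{g,n}$ in \cite[Props.~2.1, 4.6; Lem.~4.7]{hain_infpretor}. Your write-up simply fills in the details of that argument (killing $\Gr^W_{-1}\v_g$ via the abelianization, then matching $L^\bullet$ with $W_{-2\bullet}$ by lifting from $\Gr^W\v_g$ using completeness and the closed lower central series), and the one step you flag as delicate is handled correctly.
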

The following lemma describes the resulting canonical \(\Sp(H)\)-decompositions 
of the first homology groups of the corresponding Lie algebras 
\(\v_{g,n}\) and \(\v_{\cC_{g,n}}\).
\begin{lemma}\label{H_1 decomp for rel comp of hyp mcg}
For $g\geq 2$, there are canonical $\Sp(H)$-decompositions:
$$
H_1(\v_{g,n})\cong \Gr^W_{-1}H_1(\v_{g,n})\oplus \Gr^W_{-2}H_1(\v_{g,n})\cong \bigoplus_{j=1}^nH_j\oplus H_1(\v_g)
$$
and 
$$
H_1(\v_{\cC_{g,n}})\cong \Gr^W_{-1}H_1(\v_{\cC_{g,n}})\oplus \Gr^W_{-2}H_1(\v_{\cC_{g,n}})\cong \bigoplus_{j=0}^{n}H_j\oplus H_1(\v_g).
$$
\end{lemma}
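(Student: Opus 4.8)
The plan is to reduce both decompositions to hyperelliptic versions of the Birman exact sequence and then do the weight bookkeeping exactly as in the proof of Corollary~\ref{H1 decomp for the univ curve}; the only genuinely new feature is the weight~$-2$ summand $H_1(\v_g)$, which is why one ends up with a two-step weight filtration rather than a pure object. Throughout I use that morphisms of MHS are strict (so $\Gr^W_\bullet$ is exact) and that $\Sp(H)$ is reductive (so extensions of $\Sp(H)$-modules split and the weight filtration splits as $\Sp(H)$-modules).

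For $\v_{g,n}$: restricting the Birman exact sequence $1\to\pi_1^\top(F_n(\Sigma_g))\to\G_{g,n}\to\G_g\to1$ (valid for $g\geq 2$) to the preimage of $\Delta_g\subset\G_g$ gives the exact sequence $1\to\pi_1^\top(F_n(\Sigma_g))\to\Delta_{g,n}\to\Delta_g\to1$, in which $\rho^\hyp_{g,n}$ factors through $\Delta_g\xrightarrow{\rho^\hyp_g}\Sp(H)$, the kernel maps trivially to $\Sp(H)$, and $\Delta_g$ acts on $H_1(F_n(\Sigma_g),\Q)\cong\bigoplus_{j=1}^nH_j$ (the standard identification with $n$ copies of $H_1(\Sigma_g,\Q)$, pure of weight $-1$ because the classes of the diagonal divisors are linearly independent in $H^2(\Sigma_g^n)$) through $\rho^\hyp_g$. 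Applying right-exactness of relative completion gives a right-exact sequence $\p_{g,n}\to\v_{g,n}\to\v_g\to0$ of Lie algebras with MHS, where $\p_{g,n}=\Lie(\cP_{g,n})$. As in Corollary~\ref{H1 decomp for the univ curve}, the first map is injective: $\p_{g,n}$ is center-free (for $g\geq2$), so $\ad\colon\p_{g,n}\hookrightarrow\Der\p_{g,n}$ is injective and factors through $\v_{g,n}$, whence $\p_{g,n}\hookrightarrow\v_{g,n}$; thus $0\to\p_{g,n}\to\v_{g,n}\to\v_g\to0$ is exact. Now I would apply $\Gr^W_\bullet$: since $\v_g=W_{-2}\v_g$ (the preceding Proposition) exactness of $\Gr^W_{-1}$ gives $\Gr^W_{-1}\v_{g,n}\cong\Gr^W_{-1}\p_{g,n}=H_1(\p_{g,n})\cong\bigoplus_{j=1}^nH_j$, and using $[\v_{g,n},\v_{g,n}]\subseteq W_{-2}\v_{g,n}$ this equals $\Gr^W_{-1}H_1(\v_{g,n})$. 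From the right-exact sequence $H_1(\p_{g,n})\to H_1(\v_{g,n})\to H_1(\v_g)\to0$ of MHS, applying $\Gr^W_m$ and using that $H_1(\p_{g,n})$ is pure of weight $-1$ while $H_1(\v_g)\cong\Gr^W_{-2}\v_g$ is pure of weight $-2$, one gets $\Gr^W_{-2}H_1(\v_{g,n})\cong H_1(\v_g)$ and $\Gr^W_mH_1(\v_{g,n})=0$ for $m\notin\{-1,-2\}$. Splitting the two-step weight filtration over $\Sp(H)$ gives the first asserted isomorphism.

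For $\v_{\cC_{g,n}}$: this is the verbatim analogue of Corollary~\ref{H1 decomp for the univ curve}. Relative completion of the homotopy exact sequence $1\to\pi_1^\top(C)\to\pi_1^\orb(\cC^\an_{\H_{g,n}})\to\Delta_{g,n}\to1$ of the universal hyperelliptic curve yields an exact sequence of MHS $0\to\p_g\to\v_{\cC_{g,n}}\to\v_{g,n}\to0$, hence a right-exact sequence $H_1(\p_g)\to H_1(\v_{\cC_{g,n}})\to H_1(\v_{g,n})\to0$. Since $\p_g$ is center-free, $\ad\colon\p_g\hookrightarrow\Der\p_g$ is injective and factors through $\v_{\cC_{g,n}}$, so by exactness of $\Gr^W_{-1}$ the map $\Gr^W_{-1}\p_g\to\Gr^W_{-1}\v_{\cC_{g,n}}$ is injective; as $H_1(\p_g)\cong\Gr^W_{-1}\p_g$ is pure of weight $-1$ and $\Gr^W_{-1}\v_{\cC_{g,n}}=\Gr^W_{-1}H_1(\v_{\cC_{g,n}})$, this forces $H_1(\p_g)\hookrightarrow H_1(\v_{\cC_{g,n}})$. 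Splitting the short exact sequence $0\to H_1(\p_g)\to H_1(\v_{\cC_{g,n}})\to H_1(\v_{g,n})\to0$ over $\Sp(H)$ and inserting the first part gives $H_1(\v_{\cC_{g,n}})\cong H_0\oplus\bigoplus_{j=1}^nH_j\oplus H_1(\v_g)$ with $H_0:=H_1(\p_g)\cong H$, and strictness identifies the weight graded pieces as $\Gr^W_{-1}H_1(\v_{\cC_{g,n}})\cong\bigoplus_{j=0}^nH_j$ and $\Gr^W_{-2}H_1(\v_{\cC_{g,n}})\cong H_1(\v_g)$.

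The main obstacle is establishing the left-exactness of relative completion in the two Lie algebra sequences, i.e.\ injectivity of $\p_{g,n}\hookrightarrow\v_{g,n}$ and of $\p_g\hookrightarrow\v_{\cC_{g,n}}$. For the universal hyperelliptic curve this is the center-freeness argument inherited verbatim from Corollary~\ref{H1 decomp for the univ curve} (with $\p_g$ center-free for $g\geq2$). For the Birman fibration over $\H_g$ the corresponding input is the center-freeness of $\p_{g,n}$ for $g\geq2$ (equivalently, one may invoke Hain's exactness theorem for relative completion, whose hypothesis is met since the fibre homology local system $\bigoplus_jH_j$ extends to the standard $\Sp(H)$-representation). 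Once that is in hand, the rest — right-exactness of relative completion, strictness of MHS morphisms, exactness of $\Gr^W_\bullet$, the weight computations of $H_1(F_n(\Sigma_g),\Q)$ and of $H_1(\v_g)$, and semisimplicity of $\Sp(H)$-representations — is routine.
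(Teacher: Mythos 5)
Your treatment of the first decomposition is essentially the paper's own argument: right-exactness of relative completion for the hyperelliptic Birman sequence, injectivity of $\p_{g,n}\to\v_{g,n}$ (and of $H_1(\p_{g,n})\to H_1(\v_{g,n})$) from center-freeness via $\ad\colon\p_{g,n}\hookrightarrow\Der\p_{g,n}$, exactness of $\Gr^W_\bullet$, and the purity statements ($H_1(\p_{g,n})$ of weight $-1$, $H_1(\v_g)$ of weight $-2$). One small remark there: you split the two-step weight filtration "over $\Sp(H)$", which gives existence but not the asserted canonicity; it happens to be canonical because $\Hom_{\Sp}(\bigoplus_j H_j, H_1(\v_g))=0$ (Corollary~\ref{H and nichi comp in H_1 of v_g}), or, as in the paper, because the quotient map $H_1(\v_{g,n})\to\Gr^W_{-1}H_1(\v_{g,n})\cong H_1(\p_{g,n})$ is a canonical retraction of the inclusion, so no choice is involved. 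You should say one of these.

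For the second decomposition there is a genuine gap. You split $0\to H_1(\p_g)\to H_1(\v_{\cC_{g,n}})\to H_1(\v_{g,n})\to 0$ by semisimplicity of $\Sp(H)$, but here the splitting is genuinely not unique: two $\Sp(H)$-equivariant splittings differ by an element of $\Hom_{\Sp}(H_1(\v_{g,n}),H_1(\p_g))\supseteq\Hom_{\Sp}\bigl(\bigoplus_{j=1}^nH_j,\,H\bigr)\cong\Q^n\neq 0$, so this argument produces some decomposition $H_0\oplus\bigoplus_{j=1}^nH_j\oplus H_1(\v_g)$ but cannot produce the \emph{canonical} one claimed in the statement. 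The canonicity is not cosmetic: it underlies the compatibility assertions used later (the compatibility in Lemma~\ref{ab of v_{g,[n]}}, and the step in the proof of Theorem~2 where a fixed decomposition of $H_1(\v^{(\ell)}_{\cC_{g,[n]}})$ is said to induce the decomposition of $H_1(\v^{(\ell)}_{\cC_{g,n}})$ with $\Gr^W_{-1}\mathrm{p}_{\cC_\H}=(\id_{H_0},\Gr^W_{-1}\mathrm{p}_\H)$). The paper supplies the missing idea: it maps the sequence for $\v_{\cC_{g,n}}$ to the sequence $0\to H_1(\p_g)\to H_1(\v_{g,1})\to H_1(\v_g)\to 0$ via the forgetful morphism $\cC^{\an}_{\H_{g,n}}\to\H^{\an}_{g,1}$, $(C;x_1,\dots,x_n;x_0)\mapsto(C;x_0)$, observes the relevant square is a pullback, and transports the canonical ($n=1$) splitting from the first part — where canonicity does come for free from the weight projection — to obtain a canonical retraction of $H_1(\v_{\cC_{g,n}})$ onto $H_0$. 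Without this (or some other canonical projection onto $H_0$), your proof establishes the isomorphism type but not the lemma as stated.
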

\begin{proof}
The exact sequence
$$
1\to \pi_1^\top(F_n(C))\to \pi_1^\orb(\H^\an_{g,n})\to \pi_1^\orb(\H^\an_{g})\to 1
$$
induces the exact sequence of pronilpotent Lie algebras
$$
0\to \p_{g,n}\to \v_{g,n}\to \v_g\to 0,
$$
which in turn yields the exact sequence
$$
H_1(\p_{g,n})\to H_1(\v_{g,n})\to H_1(\v_g)\to 0.
$$
The adjoint action of $\v_{g,n}$ on $\p_{g,n}$ yields the Lie algebra map
$$
\v_{g,n}\to \Der\p_{g,n}.
$$
The center-freeness of $\p_{g,n}$ (see \cite{NTU}) implies that the composition 
$$
\p_{g,n}\to \v_{g,n}\to \Der\p_{g,n}
$$
is an injection.
Since the functor $\Gr^W_\bullet$ is exact, the composition
$$
H_1(\p_{g,n})\cong\Gr^W_{-1}\p_{g,n}\to \Gr^W_{-1}\v_{g,n}\to \Gr^W_{-1}\Der\p_{g,n}
$$
is also injective. Since $\Gr^W_{-1}\v_{g,n}\cong \Gr^W_{-1}H_1(\v_{g,n})$ and the diagram
$$
\xymatrix{
H_1(\p_{g,n})\ar[r]\ar[dr]&H_1(\v_{g,n})\ar[d]\\
&\Gr^W_{-1}H_1(\v_{g,n})
}
$$
commutes, the sequence 
\begin{equation}
0\to H_1(\p_{g,n})\to H_1(\v_{g,n})\to H_1(\v_{g})\to 0
\end{equation} is exact. 
Applying the functor $\Gr^W_\bullet$, we obtain the exact sequence
\begin{equation}
0\to \Gr^W_\bullet H_1(\p_{g,n})\to \Gr^W_\bullet H_1(\v_{g,n})\to \Gr^W_\bullet H_1(\v_{g})\to 0.
\end{equation}
By \textcolor{black}{\cite[Prop.~6.9]{wat_rk_hyp_univ}}, 
$H_1(\v_g)$ is pure of weight $-2$, and so there is a canonical isomorphism 
$$
\Gr^W_{-1}H_1(\v_{g,n})\cong \Gr^W_{-1}H_1(\p_{g,n})=H_1(\p_{g,n}).
$$
Composing with the projection $H_1(\v_{g,n})\to \Gr^W_{-1}H_1(\v_{g,n})$, we get a canonical projection $H_1(\v_{g,n})\to H_1(\p_{g,n})$. This yields a canonical splitting
$$
H_1(\v_{g,n}) \cong \Gr^W_{-1}H_1(\v_{g,n})\oplus \Gr^W_{-2}H_1(\v_{g,n}) \cong H_1(\p_{g,n})\oplus H_1(\v_g)\cong \bigoplus_{j=1}^nH_j\oplus H_1(\v_g).
$$

For the universal case, relative completion applied to the exact sequence
$$
1 \to \pi_1^\top(C)\to \pi_1^\orb(\cC^\an_{\H_{g,n}})\to \pi_1^\orb(\H^\an_{g,n})\to 1
$$
yields the exact sequence of MHS:
$$
0\to \p_g\to \v_{\cC_{g,n}}\to \v_{g,n}\to 0.
$$
By a similar argument as above, 
the center-freeness of~\(\p_g\) implies that the sequence
$$
0\to H_1(\p_g)\to H_1(\v_{\cC_{g,n}})\to H_1(\v_{g,n})\to 0
$$ 
is exact, and this fits into the commutative diagram:
$$
\xymatrix{
0\ar[r]&H_1(\p_g)\ar[r]\ar@{=}[d]&H_1(\v_{\cC_{g,n}})\ar[r]\ar[d]&H_1(\v_{g,n})\ar[r]\ar[d]&0\\
0\ar[r]&H_1(\p_g)\ar[r]&H_1(\v_{g,1})\ar[r]&H_1(\v_g)\ar[r]&0,
}
$$
where the middle map is induced by the projection $\cC^\an_{\H_{g,n}}\to \H^\an_{g,1}$ mapping 
$$(C;\, x_1,\dots,x_n;\, x_0)\mapsto (C; x_0).$$
Since the right square is a pullback square, the canonical splitting 
\(H_1(\v_{g,1}) \cong H_1(\p_g) \oplus H_1(\v_g)\) 
induces a canonical decomposition
\[
H_1(\v_{\cC_{g,n}}) 
  \cong H_1(\p_g) \oplus H_1(\v_{g,n}) 
  \cong \bigoplus_{j=0}^{n} H_j\oplus H_1(\v_g),
\]
where \(H_0 := H_1(\p_g)\).
\end{proof}


By Theorem \ref{str}, the generator $H_1(\v_g)$ of the Lie algebra $\v_g$ can be determined by the cohomology of the discrete group $\pi_1^\orb(\H^\an_g)\cong \Delta_g$ with symplectic representation coefficients. The following results provide some information on $H_1(\v_g)$.

Recall that $H=H^1(C, \Q(1))$ is naturally isomorphic to the standard symplectic representation. Let $\Lambda^2_0H$ be \textcolor{black}{defined as $\Lambda^2H/\langle q\rangle$.} \textcolor{black}{It is isomorphic to }the irreducible symplectic representation corresponding to the partition $1+1$.

\begin{lemma}\label{tanaka's comp}
    $H^1(\pi_1^\orb(\H^\an_g),H)=0$ and $H^1(\pi_1^\orb(\H^\an_g),\Lambda^2_0H)=0$.
\end{lemma}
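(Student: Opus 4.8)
\textbf{Proof plan for Lemma \ref{tanaka's comp}.}

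The plan is to compute these two cohomology groups by relating them to known cohomology of the hyperelliptic mapping class group $\Delta_g$, via the short exact sequence
\[
1 \to T\Delta_g \to \Delta_g \xrightarrow{\rho^\hyp_g} \Im(\rho^\hyp_g) \to 1,
\]
where $T\Delta_g$ is the hyperelliptic Torelli group. Since $\Im(\rho^\hyp_g)$ contains a finite-index subgroup of $\Sp(H_\Z)$ by A'Campo~\cite{Acampo1979}, and rational cohomology does not see finite-index subgroups, I would use the Hochschild--Serre spectral sequence
\[
H^p\bigl(\Im(\rho^\hyp_g), H^q(T\Delta_g, V)\bigr) \Rightarrow H^{p+q}(\Delta_g, V)
\]
for $V \in \{H, \Lambda^2_0 H\}$. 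In low degree this gives the five-term exact sequence, whose first few terms are
\[
0 \to H^1\bigl(\Im(\rho^\hyp_g), V\bigr) \to H^1(\Delta_g, V) \to H^0\bigl(\Im(\rho^\hyp_g), H^1(T\Delta_g, V)\bigr) \to H^2\bigl(\Im(\rho^\hyp_g), V\bigr).
\]
Since $V$ is a nontrivial irreducible $\Sp(H)$-representation, $H^0(\Im(\rho^\hyp_g), V) = V^{\Sp(H)} = 0$, and $H^1(\Im(\rho^\hyp_g), V) = H^1(\Sp(H_\Z), V)$ which vanishes for $V = H$ and $V = \Lambda^2_0 H$ by classical stable cohomology computations for the symplectic group (these are nontrivial irreducibles, so their invariants and first cohomology with a finite-index arithmetic group vanish in the stable range $g \geq 2$ or $g \geq 3$). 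Hence $H^1(\Delta_g, V)$ injects into $H^0\bigl(\Im(\rho^\hyp_g), H^1(T\Delta_g, V)\bigr) = \bigl(H^1(T\Delta_g) \otimes V\bigr)^{\Sp(H)}$, using that $T\Delta_g$ acts trivially on $V$.

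So the crux reduces to understanding $H^1(T\Delta_g, \Q) = H_1(T\Delta_g, \Q)^*$ as an $\Sp(H)$-representation and checking that $\bigl(H_1(T\Delta_g,\Q)^* \otimes H\bigr)^{\Sp(H)} = 0$ and $\bigl(H_1(T\Delta_g,\Q)^* \otimes \Lambda^2_0 H\bigr)^{\Sp(H)} = 0$. Here I would invoke the known structure of the abelianization of the hyperelliptic Torelli group: the relevant input is that $H_1(T\Delta_g, \Q)$, as a symplectic representation, does not contain a copy of $H$ (the partition $1$) nor a copy of $\Lambda^2_0 H$ (the partition $1+1$) as a subrepresentation — equivalently, by self-duality of these representations and Schur's lemma, $\Hom_{\Sp(H)}(H_1(T\Delta_g,\Q), H) = 0$ and $\Hom_{\Sp(H)}(H_1(T\Delta_g,\Q), \Lambda^2_0 H) = 0$. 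This is precisely the content one extracts from the computation of $H_1(\v_g)$ being pure of weight $-2$ together with results on the hyperelliptic Torelli group (cf.\ \cite{wat_rk_hyp_univ} and the references therein to work of Brendle--Margalit--Putman on $T\Delta_g$); indeed weight considerations already force any weight $-1$ contribution such as $H$ to be absent. An alternative and perhaps cleaner route: since $H^1(\Delta_g, V) \cong \Hom_{\Sp(H)}(H_1(\v_g), V \text{ after Tate twist})$ by Theorem~\ref{str} applied to $\cD_g$, and $H_1(\v_g)$ is pure of weight $-2$ while $H$ has weight $-1$, the group $H^1(\Delta_g, H)$ vanishes on weight grounds immediately; for $\Lambda^2_0 H$ one additionally needs that the weight $-2$ piece $H_1(\v_g)$ contains no copy of $\Lambda^2_0 H(-1)$, which follows from the explicit description of $H_1(\v_g)$ in \cite{wat_rk_hyp_univ}.

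The main obstacle is pinning down $H_1(\v_g)$ (equivalently $H_1(T\Delta_g,\Q)$) precisely enough as an $\Sp(H)$-module to rule out the two specific irreducible constituents. For $V = H$ the weight argument is decisive and essentially automatic given that $H_1(\v_g)$ is pure of weight $-2$. For $V = \Lambda^2_0 H$ one genuinely needs the finer structural result about which symplectic irreducibles appear in $H_1(\v_g)$; I would cite \cite[Prop.~6.9]{wat_rk_hyp_univ} and any accompanying decomposition there, reducing the claim to a representation-theoretic non-containment that can be checked by highest-weight bookkeeping. I expect the proof to be short once these external inputs are in place, with the only real work being the careful identification of the relevant constituents of $H_1(\v_g)$ and the verification that the Hochschild--Serre differentials do not interfere — the latter being handled by the vanishing of $H^2(\Sp(H_\Z), V)$ in the stable range, or simply by noting the injection above suffices for our vanishing conclusion.
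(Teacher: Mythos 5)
There is a genuine gap: your argument never supplies the actual input that makes this lemma true, and the route you propose is circular within this paper's logic. The paper's proof is a direct citation of Tanaka's computations that the integral homology groups $H_1(\Delta_g, H_1(\Sigma_g,\Z))$ and $H_1(\Delta_g,\otimes^2 H_1(\Sigma_g,\Z))$ are torsion, from which $H_1(\Delta_g,\Lambda^2 H_1(\Sigma_g,\Z))$ is torsion and the two rational $H^1$'s vanish by universal coefficients. Your "cleaner route" instead deduces the vanishing from the structure of $H_1(\v_g)$ as an $\Sp(H)$-module; but by Theorem~\ref{str} one has an isomorphism $H^1(\Delta_g,V)\cong\Hom_{\Sp}(H_1(\v_g),V)$, so the statements "$H^1(\Delta_g,V)=0$" and "$V$ does not occur in $H_1(\v_g)$" are literally equivalent, and in this paper the latter (Corollary~\ref{H and nichi comp in H_1 of v_g}, and the non-containment of $\Lambda^2_0H$ quoted after Theorem~\ref{hyp weight -1}) is \emph{derived from} Lemma~\ref{tanaka's comp}. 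Citing "the explicit description of $H_1(\v_g)$" or the abelianization of $T\Delta_g$ therefore assumes the conclusion: \cite[Prop.~6.9]{wat_rk_hyp_univ} gives only purity of weight $-2$, not a list of irreducible constituents, and $H_1(T\Delta_g,\Q)$ is not known as a representation (nor is it even clear it is finite dimensional), so your Hochschild--Serre reduction lands on a harder unknown rather than a known input.

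The weight argument you call "decisive" for $V=H$ is also not automatic. Purity of $H_1(\v_g)$ in weight $-2$ does not by itself exclude an $H$-isotypic summand: by Hain's Hodge theory the $H$-isotypic part of $H^1(\v_g)$ is $H^1(\Delta_g,\bH)\otimes H^\ast$ as a MHS, and $H^1(\H_g^\an,\bH_\Q)$ has weights in $\{0,1\}$; a weight-$1$ class there would contribute a copy of the standard representation to $\Gr^W_{-2}H_1(\v_g)$, perfectly compatible with purity. Ruling this out requires an actual computation, which is exactly what Tanaka's theorem provides. The salvageable parts of your plan (vanishing of $H^1$ of the arithmetic image by A'Campo plus Borel/property (T), and the five-term sequence) are fine but do not close the argument; the proof needs the external computation of $H_1(\Delta_g,\cdot)$ with coefficients in $H_1(\Sigma_g,\Z)$ and its tensor square, which your proposal omits.
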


\begin{proof}
    Tanaka's results \cite[Thm.~1.1 and Thm~.1.3]{Tanaka} state that both the homology groups $H_1(\Delta_g,H_1(\Sigma_g, \Z))$ and $H_1(\Delta_g, \otimes^2H_1(\Sigma_g, \Z))$ are torsion, which implies that $H_1(\Delta_g,\Lambda^2H_1(\Sigma_g, \Z))$ is also torsion. This lemma follows by universal coefficient theorem.
\end{proof}
\textcolor{black}{As an immediate consequence, we have the following result by Theorem \ref{str}.} 
\begin{corollary} \label{H and nichi comp in H_1 of v_g}
    $\Hom_{\Sp}(\textcolor{black}{H_1(\v_g)},H)=0$ and $\Hom_{\Sp}(\textcolor{black}{H_1(\v_g)},\Lambda^2_0H)=0$.
\end{corollary}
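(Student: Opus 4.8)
The plan is to read the statement off Theorem~\ref{str}, applied to the discrete group $\Gamma=\pi_1^\orb(\H^\an_g)\cong\Delta_g$, the reductive group $R=\Sp(H)$ over $\Q$, the Zariski-dense symplectic representation $\rho^\hyp_g$, and the pronilpotent Lie algebra $\u=\v_g$ of the prounipotent radical of the associated relative completion. Since $\Sp(H)$ is a split reductive group over $\Q$, each of its irreducible finite-dimensional representations is absolutely irreducible, so the hypotheses for the full conclusion of Theorem~\ref{str} are in force. In particular (and this part of Theorem~\ref{str} in fact needs no hypothesis), for every finite-dimensional $\Sp(H)$-module $V$ the natural map
\[
\Hom_{\Sp}(H_1(\v_g),V)\;\cong\;(H^1(\v_g)\otimes V)^{\Sp(H)}\;\longrightarrow\;H^1(\Delta_g,V)
\]
is an isomorphism.

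First I would take $V=H$, the standard representation (the irreducible attached to the partition $(1)$). By Lemma~\ref{tanaka's comp}, $H^1(\pi_1^\orb(\H^\an_g),H)=0$, and under the identification $\pi_1^\orb(\H^\an_g)\cong\Delta_g$ this is exactly $H^1(\Delta_g,H)$; hence the displayed isomorphism forces $\Hom_{\Sp}(H_1(\v_g),H)=0$. Next I would take $V=\Lambda^2_0H$, the irreducible attached to the partition $(1,1)$; again by Lemma~\ref{tanaka's comp}, $H^1(\pi_1^\orb(\H^\an_g),\Lambda^2_0H)=0$, so $\Hom_{\Sp}(H_1(\v_g),\Lambda^2_0H)=0$. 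These are precisely the two asserted vanishings. (Note that this uses more than a weight count: $H_1(\v_g)$ is pure of weight $-2$ while $H$ has weight $-1$, but $\Hom_{\Sp}$ allows maps that are not morphisms of MHS, so the input from Tanaka's theorem is genuinely needed — as is, trivially, for $\Lambda^2_0H$, which also has weight $-2$.)

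Equivalently, one can quote the explicit decomposition $H^1(\v_g)\cong\bigoplus_\alpha H^1(\Delta_g,V_\alpha)\otimes V_\alpha^*$ from Theorem~\ref{str}: dualizing, the multiplicity of an irreducible $\Sp(H)$-module $V_\alpha$ in $H_1(\v_g)$ equals $\dim H^1(\Delta_g,V_\alpha)$, which Lemma~\ref{tanaka's comp} forces to vanish for $V_\alpha=H$ and $V_\alpha=\Lambda^2_0H$. Either way the argument is a direct combination of the two cited inputs and presents no real obstacle; the only point worth a remark is that $H_1(\v_g)$ may be infinite-dimensional, but it decomposes as a direct sum of finite-dimensional irreducible $\Sp(H)$-modules, so the Hom-spaces against the finite-dimensional modules $H$ and $\Lambda^2_0H$ are well-defined and the isomorphism of Theorem~\ref{str} applies without change.
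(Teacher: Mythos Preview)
Your proposal is correct and follows exactly the paper's approach: the corollary is stated as an immediate consequence of Theorem~\ref{str} applied to $\Gamma=\Delta_g$, $R=\Sp(H)$, and $\u=\v_g$, together with the vanishing results of Lemma~\ref{tanaka's comp}. Your additional remarks on absolute irreducibility and the weight grading are not needed for the argument, but they do no harm.
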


\subsection{Characteristic classes with $S_n$-actions}

In \cite{hain-matsumoto}, each tautological section
\(x_j\colon \cC^\an_{g,n}\to\M^\an_{g,n}\) of the universal curve
determines a cohomology class
\(\kappa_j\in H^1(\M_{g,n}^\an,\mathbb{H}_\Q)\).
The construction of these classes is described in \cite[\S4]{hain-matsumoto}, to which we refer for further details. 
These \emph{characteristic classes} form a basis of 
$H^1(\M_{g,n}^\an,\mathbb{H}_\Q)$:
\[
H^1(\M_{g,n}^\an,\mathbb{H}_\Q)
  \cong \bigoplus_{j=1}^n \Q\,\kappa_j.
\]
The symmetric group $S_n$ acts naturally on the $n$ marked points of the universal curve, 
and this induces a corresponding permutation action on the classes 
$\kappa_j$ in $H^1(\M_{g,n}^\an,\mathbb{H}_\Q)$.

There is a natural isomorphism
\[
H^1(\M_{g,n}^\an,\mathbb{H}_\Q)\;\cong\; H^1(\pi_1^\orb(\M^\an_{g,n}), H),
\]
and by Theorem~\ref{str},
\[
H^1(\pi_1^\orb(\M^\an_{g,n}), H)\;\cong\; \Hom_{\Sp}(H_1(\u_{g,n}), H).
\]
By Proposition~\ref{ab of ugn},  we have   $H_1(\u_{g,n})\cong \Lambda^3_n H = \Lambda^3_0 H \oplus H^n$, and 
under this isomorphism, the projection of $H_1(\u_{g,n})$ onto the $j$-th component of $H^n$ 
corresponds to the class $\kappa_j/(2g-2)$ (cf.~\cite[\S12]{hain_rational}).  
Hence, the $S_n$-action on the marked points naturally permutes these projections.

Similarly, in the hyperelliptic case, each tautological section \(x_j\) pulls back to \(\H^\an_{g,n}\) via the closed immersion 
\(\H^\an_{g,n} \hookrightarrow \M^\an_{g,n}\), 
defining a characteristic class 
\(\kappa_j^{\hyp} \in H^1(\H^\an_{g,n}, \mathbb{H}_\Q)\). 
By \cite[Prop.~8.3]{wat_rk_hyp_univ}, there is a natural isomorphism 
\[
H^1(\H^\an_{g,n}, \mathbb{H}_\Q)
  \cong \bigoplus_{j=1}^n \Q\,\kappa_j^{\hyp}.
\]
The $S_n$-action on the marked points also naturally 
permutes the $\kappa_j^{\hyp}$.

On the other hand, one has natural isomorphisms
\[
H^1(\pi_1^\orb(\M^\an_{g,n}), \Lambda^3_0H)
  \;\cong\; H^1(\M_{g,n}^\an,\Lambda^3_0\bH_\Q)
  \;\cong\; \Q\,\nu(\cC_{g,n}),
\]
where $\nu(\cC_{g,n})$ denotes the cohomology class of the universal curve 
$\cC^\an_{g,n}\to\M^\an_{g,n}$, as defined in \cite[\S4]{hain-matsumoto}.  
This class is the pullback of $\nu(\cC_{g,1})$ and does not depend on the choice 
of tautological section (see \cite[Prop.~4.7]{hain-matsumoto}).  
Consequently, $\nu(\cC_{g,n})$ is invariant under the $S_n$-action, and therefore,
\[
H^1(\pi_1^\orb(\M^\an_{g,n}), \Lambda^3_0H)
  = H^1(\pi_1^\orb(\M^\an_{g,n}), \Lambda^3_0H)^{S_n}.
\]
Applying Theorem~\ref{str} again gives
\[
H^1(\pi_1^\orb(\M^\an_{g,n}), \Lambda^3_0H)
  \;\cong\; \Hom_{\Sp}(H_1(\u_{g,n}), \Lambda^3_0H),
\]
and hence,
\[
\Hom_{\Sp}(H_1(\u_{g,n}), \Lambda^3_0H)
  = \Hom_{\Sp}(H_1(\u_{g,n}), \Lambda^3_0H)^{S_n}.
\]

\subsection{\'Etale characteristic classes with $S_n$-actions}

Analogous to the analytic case, one can define \'etale characteristic classes over a field $k$ of characteristic $0$.  
Let 
\[
\mathbb{H}_\Ql := R^1\pi_{g,n\ast}\Ql(1)
\]
be the \'etale local system on $\M_{g,n/k}$ associated to the first \'etale cohomology of the fibers of the universal curve 
$\pi_{g,n}\colon \cC_{g,n/k}\to\M_{g,n/k}$.  
For a geometric point $[C]\in \M_{g,n}(\bar k)$, denote its fiber by 
$H_\Ql := H^1_{\et}(C_{\bar k}, \Ql(1))$.

By the comparison theorem between singular and \'etale cohomology, 
\[
H^1_{\et}(\M_{g,n/\bar k}, \mathbb{H}_\Ql)
  \cong H^1(\M_{g,n}^\an, \mathbb{H}_\Q)\otimes \Ql,
\]
so the classes $\kappa_j$ arising from the tautological sections 
$x_j\colon \M_{g,n/k}\to\cC_{g,n/k}$ form a basis
\[
H^1_{\et}(\M_{g,n/k}, \mathbb{H}_\Ql)
  \cong \bigoplus_{j=1}^n \Ql\,\kappa_j.
\]

Likewise, the classes $\kappa_j^{\hyp}$ defined earlier are restricted from 
$\M_{g,n}$ to the hyperelliptic locus 
$\H_{g,n/\bar k}$ and, by the same comparison theorem, form a basis
\[
H^1_{\et}(\H_{g,n/\bar k}, \mathbb{H}_\Ql)
  \cong \bigoplus_{j=1}^n \Ql\,\kappa_j^{\hyp}.
\]
Furthermore, as in the analytic setting, one has natural identifications
\[
H^1_{\et}(\M_{g,n/\bar k}, \mathbb{H}_\Ql)
  \cong H^1(\pi_1^{\alg}(\M_{g,n/\bar k}), H_\Ql)
  \cong\Hom_{\Sp}(H_1(\u^{\prol}_{g,n}), H_\Ql),
\]
where $H_1(\u^{\prol}_{g,n})\cong \Lambda^3_n H_\Ql
  = \Lambda^3_0H_\Ql\oplus H_\Ql^n$.  
Under this correspondence, the projection onto the $j$-th component of $H_\Ql^n$ represents $\kappa_j/(2g-2)$, and the $S_n$-action permutes these components.

Similarly, for the symplectic representation $\Lambda^3_0H_\Ql$, 
\[
H^1_{\et}(\M_{g,n/\bar k}, \Lambda^3_0\mathbb{H}_\Ql)
  \cong H^1(\pi_1^{\alg}(\M_{g,n/\bar k}), \Lambda^3_0H_\Ql)
  \cong \Ql\, \nu(\cC_{g,n}).
\]
Since $\nu(\cC_{g,n})$ is invariant under the $S_n$-action, we have
\[
H^1_{\et}(\M_{g,n/\bar k}, \Lambda^3_0\mathbb{H}_\Ql)
  = H^1_{\et}(\M_{g,n/\bar k}, \Lambda^3_0\mathbb{H}_\Ql)^{S_n},
\]
or equivalently,
\[
\Hom_{\Sp}(H_1(\u^{\prol}_{g,n}), \Lambda^3_0H_\Ql)
  = \Hom_{\Sp}(H_1(\u^{\prol}_{g,n}), \Lambda^3_0H_\Ql)^{S_n}.
\]

\subsection{\textcolor{black}{Symplectic decompositions of $H_1$ in the unordered marked points case}}
In this section, we decompose 
$H_1(\u_{g,[n]})$, $H_1(\u_{\cC_{g,[n]}})$, $H_1(\v_{g,[n]})$, and $H_1(\v_{\cC_{g,[n]}})$ 
into symplectic representations. For the standard symplectic representation component $H$, we have the following key lemma. 

\begin{lemma}\label{H component in H_1}
For $g\geq 2$, there are isomorphisms
$$
\Hom_{\Sp}(H_1(\u_{g,[n]}), H)\cong \Q\quad \text{ and }\quad \Hom_{\Sp}(H_1(\v_{g,[n]}), H)\cong \Q.
$$
\end{lemma}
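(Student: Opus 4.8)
The plan is to translate both identities into computations of $H^1$ of the relevant mapping class groups with coefficients in the standard symplectic representation $H$, and then exploit the finite quotient $S_n$. Since $\pi_1^\orb(\M^\an_{g,[n]})\cong\G_{g,[n]}$ and $\rho_{g,[n]}$ is Zariski dense, Theorem~\ref{str} (applied with $R=\Sp(H)$, $\u=\u_{g,[n]}$, and $V=H$) gives a natural isomorphism
$$
\Hom_{\Sp}(H_1(\u_{g,[n]}),H)\;\cong\;H^1(\G_{g,[n]},H),
$$
and likewise $\Hom_{\Sp}(H_1(\v_{g,[n]}),H)\cong H^1(\Delta_{g,[n]},H)$ via $\pi_1^\orb(\H^\an_{g,[n]})\cong\Delta_{g,[n]}$. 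Hence it suffices to show that each of these cohomology groups is one-dimensional over $\Q$.

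For the first, I would apply the Hochschild--Serre spectral sequence to the extension $1\to\G_{g,n}\to\G_{g,[n]}\to S_n\to1$. For any $\Q[S_n]$-module $M$ one has $H^p(S_n,M)=0$ for $p>0$ since $\Q[S_n]$ is semisimple, so the spectral sequence collapses onto its $p=0$ column and yields $H^1(\G_{g,[n]},H)\cong H^1(\G_{g,n},H)^{S_n}$, the $S_n$-action being the canonical one attached to the extension. The characteristic-class discussion of the previous subsection identifies $H^1(\G_{g,n},H)\cong H^1(\M^\an_{g,n},\mathbb{H}_\Q)\cong\bigoplus_{j=1}^n\Q\kappa_j$ as $S_n$-modules, with $S_n$ permuting the $\kappa_j$; the invariants of a permutation module are the line spanned by $\kappa_1+\cdots+\kappa_n$, whence $H^1(\G_{g,[n]},H)\cong\Q$. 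The hyperelliptic case is word-for-word the same with $\Delta$ in place of $\G$ and $\kappa_j^\hyp$ in place of $\kappa_j$: the extension $1\to\Delta_{g,n}\to\Delta_{g,[n]}\to S_n\to1$ is exact because $\H^\an_{g,n}\to\H^\an_{g,[n]}$ is a degree-$n!$ étale cover with deck group $S_n$, so $H^1(\Delta_{g,[n]},H)\cong H^1(\Delta_{g,n},H)^{S_n}$, and $H^1(\Delta_{g,n},H)\cong H^1(\H^\an_{g,n},\mathbb{H}_\Q)\cong\bigoplus_{j=1}^n\Q\kappa_j^\hyp$ by \cite[Prop.~8.3]{wat_rk_hyp_univ} is again a permutation module with one-dimensional invariants.

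I do not anticipate a genuine obstacle, since all the substantive inputs — Theorem~\ref{str} and the $\kappa_j$, $\kappa_j^\hyp$ bases together with their behaviour under $S_n$ — have been established above. The one point that must be carefully invoked rather than reproved is the compatibility between the $S_n$-action on $H^1(\G_{g,n},H)$ coming from the extension $1\to\G_{g,n}\to\G_{g,[n]}\to S_n\to1$ (equivalently, from the $S_n$-orbifold-cover structure of $\M^\an_{g,n}\to\M^\an_{g,[n]}$) and the geometric permutation action on the classes $\kappa_j$; this is what legitimizes the passage to invariants, and it is precisely the content of the subsection on characteristic classes with $S_n$-actions. One should also keep track that the description $H^1(\M^\an_{g,n},\mathbb{H}_\Q)\cong\bigoplus\Q\kappa_j$ (and its hyperelliptic analogue) is valid for $g\geq2$, which is what fixes the hypothesis of the lemma.
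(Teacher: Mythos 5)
Your proposal is correct and follows essentially the same route as the paper: both reduce via Theorem~\ref{str} to $H^1(\G_{g,[n]},H)$ and $H^1(\Delta_{g,[n]},H)$, use Hochschild--Serre for the extension by $S_n$ (the paper via the five-term sequence, you via the collapse from vanishing of higher rational cohomology of the finite group $S_n$ --- the same fact) to identify these with the $S_n$-invariants of $H^1(\G_{g,n},H)$ and $H^1(\Delta_{g,n},H)$, and then compute the invariants of the permutation module $\bigoplus_j \Q\,\kappa_j$ (resp.\ $\bigoplus_j \Q\,\kappa_j^\hyp$) to get $\Q$. No substantive difference from the paper's argument.
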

\begin{proof}
By Theorem \ref{str} and \textcolor{black}{Example} \ref{rel comp in unordered case}, we have
$$
 H^1(\pi_1^\orb(\M^\an_{g, [n]}),H)\cong \Hom_{\Sp}(H_1(\u_{g,[n]}), H).
$$
From the exact sequence  
\[
\begin{tikzcd}
1 \ar[r] 
  & \pi_1^\orb(\M^\an_{g,n}) \ar[d,"\cong"'] \ar[r] 
  & \pi_1^\orb(\M^\an_{g,[n]}) \ar[d,"\cong"'] \ar[r] 
  & S_n \ar[r] & 1 \\
  & \G_{g,n} 
  & \G_{g,[n]}  & &
\end{tikzcd}
\]
the \emph{Hochschild--Serre five-term exact sequence} yields  
\[
1 \longrightarrow 
H^1(S_n, H^{\Gamma_{g,n}}) 
\longrightarrow 
H^1(\Gamma_{g,[n]}, H)
\longrightarrow 
H^1(\Gamma_{g,n}, H)^{S_n}
\longrightarrow 
H^2(S_n, H^{\Gamma_{g,n}}),
\]
where \(H\) is regarded as a \(\Gamma_{g,[n]}\)-module.
Since both $H^1(S_n,H^{\Gamma_{g,n}})$ and $H^2(S_n,H^{\Gamma_{g,n}})$ vanish, 
$$H^1(\Gamma_{g,[n]},H)\cong H^1(\Gamma_{g,n},H)^{S_n}.$$
Since  
\[
H^1(\pi_1^\orb(\M^\an_{g,n}), H)
  \cong H^1(\M_{g,n}^\an, \mathbb{H}_\Q)
  \cong \bigoplus_{j=1}^n \Q\,\kappa_j,
\]
the \(S_n\)-invariant part is isomorphic to \(\Q\).  
Consequently,
\[
\Hom_{\Sp}\!\left(H_1(\u_{g,[n]}), H\right) \cong \Q.
\]
The same argument also works for $\Hom_{\Sp}(H_1(\v_{g,[n]}), H)\cong \Q$, using the natural isomorphism
$$
H^1(\H^\an_{g,n}, \mathbb{H}_\Q)
  \cong \bigoplus_{j=1}^n \Q\,\kappa_j^{\hyp}.
$$
\end{proof}
The natural $S_n$–action on $H_1(\u_{g,n})$ allows one to determine explicitly the decompositions of 
$H_1(\u_{g,[n]})$ and $H_1(\u_{\cC_{g,[n]}})$.
\begin{lemma}\label{ab of u_{g,[n]}}
For \(g \ge 3\), there is an \(\Sp(H)\)-decomposition
\[
H_1(\u_{g,[n]})
  \cong H_1(\u_g) \oplus \!\bigg(\bigoplus_{j=1}^{n} H_j\!\bigg)_{S_n}
  = \Lambda^3_0 H \oplus H,
\]
which is compatible with the projection
\(H_1(\u_{g,n}) \to H_1(\u_{g,[n]})\)
and with the \(\Sp(H)\)-decomposition
\(H_1(\u_{g,n}) = \Lambda^3_0 H \oplus \bigoplus_{j=1}^{n} H_j\).
Furthermore, there is an \(\Sp(H)\)-decomposition
\[
H_1(\u_{\cC_{g,[n]}}) \cong H_0 \oplus \Lambda^3_0 H \oplus H.
\]

\end{lemma}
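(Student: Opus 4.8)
The plan is to realize $H_1(\u_{g,[n]})$ as an $\Sp(H)$-equivariant quotient of $H_1(\u_{g,n})$ and then pin down which irreducible constituents survive by computing $\Sp(H)$-multiplicity spaces as group cohomology, via Theorem~\ref{str} and the Hochschild--Serre five-term sequence of $1\to\G_{g,n}\to\G_{g,[n]}\to S_n\to1$. First, apply the right-exactness of relative completion \cite[Prop.~3.6]{Hain15} to the morphism of short exact sequences with top row $1\to\G_{g,n}\to\G_{g,[n]}\to S_n\to1$ and bottom row $1\to\Sp(H)\xrightarrow{\,\id\,}\Sp(H)\to1\to1$, the vertical maps being $\rho_{g,n}$, $\rho_{g,[n]}$, and the trivial map $S_n\to\{1\}$; since the relative completion of $S_n$ occurring here is its rational unipotent completion, hence trivial because $S_n$ is finite, one obtains a surjection $\cG_{g,n}\twoheadrightarrow\cG_{g,[n]}$ of proalgebraic groups over $\Sp(H)$, and therefore a surjection $p\colon H_1(\u_{g,n})\twoheadrightarrow H_1(\u_{g,[n]})$ of $\Sp(H)$-modules, which is moreover a morphism of MHS by \cite[Thm.~13.12]{hain_hodge_rel}. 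Since $H_1(\u_{g,n})\cong\Lambda^3_0H\oplus\bigoplus_{j=1}^nH_j$ by Proposition~\ref{ab of ugn}, we may write $H_1(\u_{g,[n]})\cong(\Lambda^3_0H)^{\oplus a}\oplus H^{\oplus b}$ for some $a,b\ge0$.

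To compute $a$ and $b$, recall from Theorem~\ref{str} applied to $\G_{g,[n]}$ with its symplectic representation that $\Hom_{\Sp}(H_1(\u_{g,[n]}),V)\cong H^1(\G_{g,[n]},V)$ for every finite-dimensional $\Sp(H)$-module $V$. Taking $V=H$ and invoking Lemma~\ref{H component in H_1} gives $b=\dim\Hom_{\Sp}(H_1(\u_{g,[n]}),H)=1$. Taking $V=\Lambda^3_0H$, the Hochschild--Serre five-term sequence degenerates: $(\Lambda^3_0H)^{\G_{g,n}}=(\Lambda^3_0H)^{\Sp(H)}=0$ because $\G_{g,n}$ has Zariski-dense image in $\Sp(H)$ and $\Lambda^3_0H$ is a nontrivial irreducible representation, so $H^1(\G_{g,[n]},\Lambda^3_0H)\cong H^1(\G_{g,n},\Lambda^3_0H)^{S_n}$. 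By Theorem~\ref{str} again, $H^1(\G_{g,n},\Lambda^3_0H)\cong\Hom_{\Sp}(H_1(\u_{g,n}),\Lambda^3_0H)\cong\Q\,\nu(\cC_{g,n})$ is one-dimensional, and $\nu(\cC_{g,n})$ is $S_n$-invariant, so $a=\dim H^1(\G_{g,[n]},\Lambda^3_0H)=1$. Hence $H_1(\u_{g,[n]})\cong\Lambda^3_0H\oplus H$; since the $S_n$-coinvariants of the permutation module $\bigoplus_{j=1}^nH_j$ form a single copy of $H$, this reads $H_1(\u_{g,[n]})\cong H_1(\u_g)\oplus\big(\bigoplus_{j=1}^nH_j\big)_{S_n}$.

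The compatibility is read off from the dual maps. For each $V$, the map $p^{\ast}\colon\Hom_{\Sp}(H_1(\u_{g,[n]}),V)\to\Hom_{\Sp}(H_1(\u_{g,n}),V)$ is injective because $p$ is surjective; for $V=\Lambda^3_0H$ both sides are one-dimensional, so $p^{\ast}$ is an isomorphism, which forces $p$ to carry the $\Lambda^3_0H$-summand of $H_1(\u_{g,n})$ isomorphically onto $H_1(\u_g)\cong\Lambda^3_0H$. For $V=H$, under the Hochschild--Serre identification $H^1(\G_{g,[n]},H)\cong H^1(\G_{g,n},H)^{S_n}$ the image of $p^{\ast}$ is the line spanned by $\kappa_1+\cdots+\kappa_n$ inside $H^1(\G_{g,n},H)\cong\bigoplus_{j=1}^n\Q\kappa_j$; dualizing, $p$ restricted to $\bigoplus_{j=1}^nH_j$ is the summation map $(v_1,\dots,v_n)\mapsto v_1+\cdots+v_n$, that is, the canonical surjection onto the $S_n$-coinvariants. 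This is exactly the asserted compatibility with the projection $H_1(\u_{g,n})\to H_1(\u_{g,[n]})$ and with the decomposition $H_1(\u_{g,n})=\Lambda^3_0H\oplus\bigoplus_jH_j$.

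For the universal curve, relative completion of $1\to\pi_1^\top(C)\to\pi_1^\orb(\cC^\an_{g,[n]})\to\pi_1^\orb(\M^\an_{g,[n]})\to1$ yields, exactly as in the proof of Corollary~\ref{H1 decomp for the univ curve}, an exact sequence of MHS $0\to\p_g\to\u_{\cC_{g,[n]}}\to\u_{g,[n]}\to0$, hence a right-exact sequence $H_1(\p_g)\to H_1(\u_{\cC_{g,[n]}})\to H_1(\u_{g,[n]})\to0$ whose outer terms are pure of weight $-1$ (for $H_1(\u_{g,[n]})$, this follows from the decomposition above and the fact that $p$ is a morphism of MHS). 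Thus $H_1(\u_{\cC_{g,[n]}})$ is pure of weight $-1$, so $H_1(\u_{\cC_{g,[n]}})\cong\Gr^W_{-1}\u_{\cC_{g,[n]}}$; using that $\p_g$ is center-free — so $\adj\colon\p_g\to\Der\p_g$ is injective and factors through $\u_{\cC_{g,[n]}}$ — and the exactness of $\Gr^W_{-1}$, precisely as in Corollary~\ref{H1 decomp for the univ curve}, the sequence $0\to H_1(\p_g)\to H_1(\u_{\cC_{g,[n]}})\to H_1(\u_{g,[n]})\to0$ is exact and splits as $\Sp(H)$-modules, giving $H_1(\u_{\cC_{g,[n]}})\cong H_1(\p_g)\oplus H_1(\u_{g,[n]})\cong H_0\oplus\Lambda^3_0H\oplus H$ with $H_0:=H_1(\p_g)\cong H$. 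The main obstacle is the second paragraph: one must show that the $\Lambda^3_0H$-multiplicity of $H_1(\u_{g,[n]})$ does not drop, and this rests precisely on the vanishing $(\Lambda^3_0H)^{\G_{g,n}}=0$ (which degenerates the Hochschild--Serre sequence) together with the $S_n$-invariance of the single class $\nu(\cC_{g,n})$ recorded earlier; everything else is bookkeeping with Theorem~\ref{str} and the weight filtration.
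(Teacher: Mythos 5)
Your proposal is correct, and its first half takes a genuinely different route from the paper's. The paper works with the fibration over $\M_g$: it forms the right-exact sequence $\p_{g,[n]}\to\u_{g,[n]}\to\u_g\to 0$, computes $H_1(\p_{g,[n]})\cong\big(\bigoplus_{j=1}^n H_j\big)_{S_n}\cong H$ from the homological five-term sequence for $F_n(C)\to F_{[n]}(C)$, and then uses Lemma~\ref{H component in H_1} (plus $H_1(\u_g)\cong\Lambda^3_0H$) to force the induced $H_1$-sequence to be short exact, from which the decomposition and the coinvariant description of the projection follow. You instead obtain the surjection $H_1(\u_{g,n})\twoheadrightarrow H_1(\u_{g,[n]})$ directly from right-exactness of relative completion applied to $1\to\G_{g,n}\to\G_{g,[n]}\to S_n\to 1$ (the completion of $S_n$ being its trivial unipotent completion), and then count both isotypic multiplicities cohomologically via Theorem~\ref{str}: the $H$-multiplicity by Lemma~\ref{H component in H_1} (the same input the paper uses) and the $\Lambda^3_0H$-multiplicity by a second five-term argument combined with $H^1(\G_{g,n},\Lambda^3_0H)\cong\Q\,\nu(\cC_{g,n})$ and the $S_n$-invariance of $\nu(\cC_{g,n})$ — facts the paper records in its characteristic-classes subsection but does not invoke in this proof. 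Your Schur-lemma/dual-map argument then recovers the asserted compatibility, the only caveat being that it pins the blocks of the projection down up to nonzero scalars (equal coefficients on the $H_j$, nonzero scalar on $\Lambda^3_0H$), which is harmless since the splittings are only fixed up to such scalars anyway. The universal-curve half of your argument (purity of weight $-1$, center-freeness of $\p_g$, exactness of $\Gr^W_{-1}$) coincides with the paper's. What your route buys is independence from the configuration Lie algebra $\p_{g,[n]}$ and the quotient $\u_g$; what the paper's route buys is that it never needs the one-dimensionality of $H^1(\G_{g,n},\Lambda^3_0H)$, only the $H$-coefficient computation.
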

\begin{proof}
Consider the commutative diagram:
$$
\xymatrix{
1\ar[r]&\pi_1^\top(F_n(C))\ar[r]\ar[d]&\pi_1^\orb(\M^\an_{g,n})\ar[r]\ar[d]&\pi_1^\orb(\M^\an_{g})\ar[r]\ar@{=}[d]&1\\
1\ar[r]&\pi_1^\top(F_{[n]}(C))\ar[r]&\pi_1^\orb(\M^\an_{g,[n]})\ar[r]&\pi_1^\orb(\M^\an_{g})\ar[r]&1.
}
$$
This diagram induces the following commutative diagram:
$$
\xymatrix{
0\ar[r]&\p_{g,n}\ar[r]\ar[d]&\u_{g,n}\ar[r]\ar[d]&\u_g\ar[r]\ar@{=}[d]&0\\
&\p_{g,[n]}\ar[r]&\u_{g,[n]}\ar[r]&\u_g\ar[r]&0,
}
$$
which in turn yields
$$
\xymatrix{
0\ar[r]&H_1(\p_{g,n})\ar[r]\ar[d]&H_1(\u_{g,n})\ar[r]\ar[d]&H_1(\u_g)\ar[r]\ar@{=}[d]&0\\
&H_1(\p_{g,[n]})\ar[r]&H_1(\u_{g,[n]})\ar[r]&H_1(\u_g)\ar[r]&0,
}
$$
where the rows are exact and the maps are all $\Sp(H)$-equivariant. 
Here we have 
$$
H_1(\p_{g,n})\cong H_1(F_n(C),\Q)\quad \text{ and }\quad H_1(\p_{g,[n]})\cong H_1(F_{[n]}(C),\Q),
$$
and by \cite[Prop.~2.1]{hain_infpretor} 
$$
H_1(F_n(C),\Q)\cong \bigoplus_{j=1}^nH_j,
$$
where each \(H_j\) denotes the copy of \(H\) associated with the \(j\)-th component of the product \(C^n\).
From the exact sequence
\[
1\longrightarrow 
\pi_1^\top(F_n(C))\longrightarrow 
\pi_1^\top(F_{[n]}(C))\longrightarrow 
S_n\longrightarrow 1,
\]
the Hochschild--Serre five-term exact sequence in homology gives
\[
H_2(S_n,\Q)\longrightarrow 
H_1(\pi_1^\top(F_n(C)),\Q)_{S_n}
\longrightarrow 
H_1(\pi_1^\top(F_{[n]}(C)),\Q)
\longrightarrow 
H_1(S_n,\Q)\longrightarrow 0.
\]
Since both end terms vanish, we obtain an isomorphism
\[
H_1(\pi_1^\top(F_{[n]}(C),x),\Q)
  \;\cong\;
H_1(\pi_1^\top(F_n(C)),\Q)_{S_n}.
\]
Passing to the Lie algebras of the unipotent completions,
\[
H_1(\p_{g,[n]})
  \;\cong\;
H_1(\p_{g,n})_{S_n}
  \;\cong\;
\Big(\bigoplus_{j=1}^n H_j\Big)_{S_n}
  \;\cong\;
H.
\]
By Lemma \ref{H component in H_1}, $\Hom_{\Sp(H)}(H_1(\u_{g,[n]}), H) =\Q$, and since $H_1(\u_g)\cong \Lambda^3_0H$, 
the surjection $H_1(\p_{g, [n]})\to \ker(H_1(\u_{g,[n]})\to H_1(\u_g))$ implies that 
$$
\ker(H_1(\u_{g,[n]})\to H_1(\u_g)) \cong H.
$$
Therefore, the sequence
$$
0\to H_1(\p_{g,[n]})\to H_1(\u_{g,[n]})\to H_1(\u_g)\to0
$$
is exact. 
Fix an $\Sp(H)$-decomposition
$$
H_1(\u_{g,n})\cong H_1(\u_g)\oplus H_1(\p_{g,n})\cong \Lambda^3_0H\oplus \bigoplus_{j=1}^nH_j. 
$$
This yields an $\Sp(H)$-decomposition 
$$
H_1(\u_{g,[n]})\cong H_1(\u_g)\oplus H_1(\p_{g,[n]}) \cong \Lambda^3_0{H} \oplus H.
$$
With these decompositions, the projection $H_1(\u_{g,n})\to H_1(\u_{g,[n]})$ can be identified with
the quotient map
$$
\Lambda^3_0H\oplus \bigoplus_{j=1}^nH_j \to (\Lambda^3_0H\oplus \bigoplus_{j=1}^nH_j)_{S_n}=\Lambda^3_0H \oplus H,
$$
where $S_n$ acts trivially on $\Lambda^3_0H$. 

For the universal case, applying relative completion to the exact sequence
\[
1 \longrightarrow \pi_1^\top(C)
  \longrightarrow \pi_1^\orb(\cC^\an_{g,[n]})
  \longrightarrow \pi_1^\orb(\M^\an_{g,[n]})
  \longrightarrow 1
\]
yields an exact sequence of MHS
\[
0 \longrightarrow \p_g
  \longrightarrow \u_{\cC_{g,[n]}}
  \longrightarrow \u_{g,[n]}
  \longrightarrow 0.
\]
As in the proof of Lemma \ref{H1 decomp for the univ curve}, this gives the exact sequence 
\[
 0\to H_1(\p_g)
  \longrightarrow H_1(\u_{\cC_{g,[n]}})
  \longrightarrow H_1(\u_{g,[n]})
  \longrightarrow 0.
\]
Consequently, we obtain an \(\Sp(H)\)-decomposition
\[
H_1(\u_{\cC_{g,[n]}})
  \cong H_1(\p_g) \oplus H_1(\u_{g,[n]})
  \cong H_0 \oplus \Lambda^3_0H \oplus H,
\]
where \(H_0 := H_1(\p_g)\). 
\end{proof}

In the hyperelliptic case, the group 
\(H_1(\v_{g,[n]})\) 
admits a canonical decomposition into its weight~\(-1\) and~\(-2\) components.  
The weight~\(-2\) part is given by \(H_1(\v_g)\), whose explicit presentation remains to be determined.

\begin{lemma}\label{ab of v_{g,[n]}}
For $g \geq 2$, there is a canonical $\Sp(H)$-decomposition of $H_1(\v_{g, [n]})$
$$
H_1(\v_{g, [n]}) \cong H_1(\v_g) \oplus H
$$
that is compatible with the $\Sp(H)$-decomposition $H_1(\v_{g,n}) =H_1(\v_g) \oplus \bigoplus_{j=1}^nH_j$. Furthermore, there is an $\Sp(H)$-decomposition
\[
H_1(\v_{\cC_{g,[n]}}) = H_0 \oplus H_1(\v_g)\oplus H.
\]
\end{lemma}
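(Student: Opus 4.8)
The plan is to mirror the proof of Lemma~\ref{ab of u_{g,[n]}}, replacing $\u_{g,n},\u_{g,[n]},\u_g$ by the hyperelliptic completions $\v_{g,n},\v_{g,[n]},\v_g$ and invoking Lemma~\ref{H_1 decomp for rel comp of hyp mcg} and Corollary~\ref{H and nichi comp in H_1 of v_g} in place of the identification $H_1(\u_g)\cong\Lambda^3_0H$. First I would apply relative completion to the morphism of homotopy exact sequences coming from the finite \'etale cover $F_n(C)\to F_{[n]}(C)$, namely from $1\to\pi_1^\top(F_n(C))\to\pi_1^\orb(\H^\an_{g,n})\to\pi_1^\orb(\H^\an_g)\to 1$ to its unordered analogue, obtaining a commutative ladder of pronilpotent Lie algebras whose top row is the exact sequence $0\to\p_{g,n}\to\v_{g,n}\to\v_g\to 0$ of Lemma~\ref{H_1 decomp for rel comp of hyp mcg} and whose bottom row $\p_{g,[n]}\to\v_{g,[n]}\to\v_g\to 0$ is right exact. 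Passing to $H_1$ produces a commutative diagram with exact top row $0\to H_1(\p_{g,n})\to H_1(\v_{g,n})\to H_1(\v_g)\to 0$ and right-exact bottom row $H_1(\p_{g,[n]})\to H_1(\v_{g,[n]})\to H_1(\v_g)\to 0$. By the proof of Lemma~\ref{ab of u_{g,[n]}}, whose computation of $H_1(\p_{g,[n]})$ is purely topological and hence applies verbatim here, $H_1(\p_{g,[n]})\cong H_1(\p_{g,n})_{S_n}\cong\bigl(\bigoplus_{j=1}^n H_j\bigr)_{S_n}\cong H$, with $S_n$ acting trivially on the weight~$-2$ part $H_1(\v_g)$.

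The key step is to show the bottom row is left exact, i.e.\ that $H_1(\p_{g,[n]})\to H_1(\v_{g,[n]})$ is injective. Its image $K:=\ker\bigl(H_1(\v_{g,[n]})\to H_1(\v_g)\bigr)$ is a quotient of $H_1(\p_{g,[n]})\cong H$, so $K=0$ or $K\cong H$ by irreducibility of $H$. If $K=0$ then $H_1(\v_{g,[n]})\cong H_1(\v_g)$, which contradicts the combination of $\Hom_{\Sp}(H_1(\v_g),H)=0$ (Corollary~\ref{H and nichi comp in H_1 of v_g}) and $\Hom_{\Sp}(H_1(\v_{g,[n]}),H)\cong\Q$ (Lemma~\ref{H component in H_1}). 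Hence $K\cong H$, the map $H_1(\p_{g,[n]})\to K$ is an isomorphism, and
\[
0\to H_1(\p_{g,[n]})\to H_1(\v_{g,[n]})\to H_1(\v_g)\to 0
\]
is exact and $\Sp(H)$-equivariantly split. Fixing the decomposition $H_1(\v_{g,n})\cong H_1(\v_g)\oplus\bigoplus_{j=1}^n H_j$ of Lemma~\ref{H_1 decomp for rel comp of hyp mcg} and using the compatibility of the vertical maps in the $H_1$ ladder, the projection $H_1(\v_{g,n})\to H_1(\v_{g,[n]})$ is identified with the quotient onto $H_1(\v_g)\oplus\bigl(\bigoplus_{j=1}^n H_j\bigr)_{S_n}=H_1(\v_g)\oplus H$, giving the asserted $\Sp(H)$-decomposition $H_1(\v_{g,[n]})\cong H_1(\v_g)\oplus H$ with the stated compatibility.

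For the universal hyperelliptic curve I would apply relative completion to $1\to\pi_1^\top(C)\to\pi_1^\orb(\cC^\an_{\H_{g,[n]}})\to\pi_1^\orb(\H^\an_{g,[n]})\to 1$, giving the exact sequence of mixed Hodge structures $0\to\p_g\to\v_{\cC_{g,[n]}}\to\v_{g,[n]}\to 0$. As in the proofs of Lemmas~\ref{H1 decomp for the univ curve} and~\ref{H_1 decomp for rel comp of hyp mcg}, center-freeness of $\p_g$ makes $\adj\colon\p_g\to\Der\p_g$ injective; since it factors through $\v_{\cC_{g,[n]}}$ and $\Gr^W_{-1}$ is exact, the composite $\Gr^W_{-1}\p_g\to\Gr^W_{-1}\v_{\cC_{g,[n]}}\to\Gr^W_{-1}\Der\p_g$ is injective, which forces $0\to H_1(\p_g)\to H_1(\v_{\cC_{g,[n]}})\to H_1(\v_{g,[n]})\to 0$ to be exact. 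Combining with the previous step yields $H_1(\v_{\cC_{g,[n]}})\cong H_0\oplus H_1(\v_g)\oplus H$ with $H_0:=H_1(\p_g)\cong H$.

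I expect the main obstacle to be the left-exactness argument of the second paragraph: it hinges entirely on the multiplicity-one statement of Lemma~\ref{H component in H_1} together with the vanishing $\Hom_{\Sp}(H_1(\v_g),H)=0$ of Corollary~\ref{H and nichi comp in H_1 of v_g} (which in turn rests on Tanaka's homology computations). Without this vanishing the weight~$-2$ part $H_1(\v_g)$ could a priori absorb the copy of $H$ arising from $\p_{g,[n]}$, and the decomposition would break; everything else is a routine transcription of the ordered case and of the $\Gr^W_{-1}$-exactness argument already used for $\u_{\cC_{g,n}}$ and $\v_{\cC_{g,n}}$.
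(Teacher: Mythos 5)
Your proposal is correct and follows essentially the same route as the paper: the same ladder of right-exact $H_1$-sequences, the same identification $H_1(\p_{g,[n]})\cong\bigl(\bigoplus_{j=1}^n H_j\bigr)_{S_n}\cong H$, and the same contradiction between $\Hom_{\Sp}(H_1(\v_{g,[n]}),H)\cong\Q$ (Lemma~\ref{H component in H_1}) and $\Hom_{\Sp}(H_1(\v_g),H)=0$ (Corollary~\ref{H and nichi comp in H_1 of v_g}) to force left exactness, followed by the center-freeness argument for the universal case. The only cosmetic difference is that you rule out the degenerate case by viewing the kernel as a quotient of the irreducible $H$, where the paper phrases the same dichotomy through $\Gr^W_{-1}$ and obtains the canonical splitting from the weight projection; since $H$ has multiplicity one in $H_1(\v_{g,[n]})$, your $\Sp(H)$-equivariant splitting coincides with that canonical one.
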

\begin{proof}
Consider the commutative diagram:
$$
\xymatrix{
1\ar[r]&\pi_1^\top(F_n(C))\ar[r]\ar[d]&\pi_1^\orb(\H^\an_{g,n})\ar[r]\ar[d]&\pi_1^\orb(\H^\an_{g})\ar[r]\ar@{=}[d]&1\\
1\ar[r]&\pi_1^\top(F_{[n]}(C))\ar[r]&\pi_1^\orb(\H^\an_{g,[n]})\ar[r]&\pi_1^\orb(\H^\an_{g})\ar[r]&1.
}
$$
This diagram induces the following commutative diagram:
$$
\xymatrix{
0\ar[r]&\p_{g,n}\ar[r]\ar[d]&\v_{g,n}\ar[r]\ar[d]&\v_g\ar[r]\ar@{=}[d]&0\\
&\p_{g,[n]}\ar[r]&\v_{g,[n]}\ar[r]&\v_g\ar[r]&0,
}
$$
which in turn yields the diagram ($\ast$)
$$
\xymatrix{
0\ar[r]&H_1(\p_{g,n})\ar[r]\ar[d]&H_1(\v_{g,n})\ar[r]\ar[d]&H_1(\v_g)\ar[r]\ar@{=}[d]&0\\
&H_1(\p_{g,[n]})\ar[r]&H_1(\v_{g,[n]})\ar[r]&H_1(\v_g)\ar[r]&0,
}
$$
where the rows are exact and the maps are all $\Sp(H)$-equivariant. 
Applying the functor $\Gr^W_{-1}$ to the diagram $(\ast)$, we obtain the commutative diagram
$$
\xymatrix{
0\ar[r]&\bigoplus_{j=1}^nH_j\ar[r]^-
{\cong}\ar[d]&\Gr^W_{-1}H_1(\v_{g,n})\ar[r]\ar[d]&0\\
&H\ar[r]&\Gr^W_{-1}H_1(\v_{g, [n]})\ar[r]& 0
}
$$
Since \(H_1(\p_{g,[n]})\) is pure of weight \(-1\), 
if \(\Gr^W_{-1}H_1(\v_{g,[n]}) = 0\), then 
\(H_1(\v_{g,[n]}) = W_{-2}H_1(\v_{g,[n]}) 
  = \Gr^W_{-2}H_1(\v_{g,[n]})\),
and we would obtain an \(\Sp(H)\)-isomorphism
\[
H_1(\v_{g,[n]})
  = \Gr^W_{-2}H_1(\v_{g,[n]})
  \cong \Gr^W_{-2}H_1(\v_g)
  = H_1(\v_g).
\]
However, by Lemma~\ref{H component in H_1}, 
\(\Hom_{\Sp(H)}(H_1(\v_{g,[n]}), H) = \Q\),
whereas Lemma~\ref{H and nichi comp in H_1 of v_g} 
shows that this assumption cannot occur.  
Hence, \(\Gr^W_{-1}H_1(\v_{g,[n]}) \neq 0\),
and the induced map 
\[
H \longrightarrow \Gr^W_{-1}H_1(\v_{g,[n]})
\]
is an \(\Sp(H)\)-isomorphism.
Since the diagram
$$
\xymatrix{
H_1(\p_{g, [n]})\ar[r]\ar[dr]&H_1(\v_{g,[n]})\ar[d]\\
&\Gr^W_{-1}H_1(\v_{g, [n]})
}
$$
commutes, it follows that 
$$
0\to H_1(\p_{g,[n]})\to H_1(\v_{g,[n]})\to H_1(\v_g)\to0
$$
is exact. The composition $H_1(\v_{g, [n]})\to\Gr^W_{-1}H_1(\v_{g, [n]})\cong H_1(\p_{g,[n]})$ gives a canonical decomposition 
$$
H_1(\v_{g,[n]})\cong H_1(\v_g)\oplus H_1(\p_{g,[n]}) \cong H_1(\v_g) \oplus H.
$$
By Lemma \ref{H_1 decomp for rel comp of hyp mcg}, there is a canonical decomposition
$$
H_1(\v_{g,n})\cong H_1(\v_g)\oplus H_1(\p_{g,n})\cong H_1(\v_g)\oplus \bigoplus_{j=1}^nH_j. 
$$
Since the diagram
\[
\xymatrix{
0\ar[r] & H_1(\p_{g,n}) \ar[r]^-{\cong}\ar[d] 
  & \Gr^W_{-1}H_1(\v_{g,n}) \ar[r]\ar[d] & 0 \\
0\ar[r] & H_1(\p_{g,[n]}) \ar[r]^
-{\cong} 
  & \Gr^W_{-1}H_1(\v_{g,[n]}) \ar[r] & 0
}
\]
commutes, these decompositions are compatible and 
the projection 
\(H_1(\v_{g,n}) \to H_1(\v_{g,[n]})\)
corresponds to the quotient map
\[
H_1(\v_g) \oplus \bigoplus_{j=1}^n H_j 
  \longrightarrow 
  \big(H_1(\v_g) \oplus \bigoplus_{j=1}^n H_j\big)_{S_n}
  = H_1(\v_g) \oplus H,
\]
where \(S_n\) acts trivially on \(H_1(\v_g)\).

For the universal hyperelliptic case, as in the proof of 
Lemma~\ref{H_1 decomp for rel comp of hyp mcg}, 
there is an exact sequence
\[
0 \longrightarrow H_1(\p_g)
  \longrightarrow H_1(\v_{\cC_{g,[n]}})
  \longrightarrow H_1(\v_{g,[n]})
  \longrightarrow 0.
\]
This yields an \(\Sp(H)\)-decomposition
\[
H_1(\v_{\cC_{g,[n]}})
  \cong H_1(\p_g) \oplus H_1(\v_{g,[n]})
  \cong H_0 \oplus H_1(\v_g) \oplus H,
\]
where \(H_0 := H_1(\p_g)\).
\end{proof}


\section{Sections of $\Gr^W_\bullet\u_{\cC_{g,n}}\to \Gr^W_\bullet\u_{g,n}$}
For simplicity, in this section set $H: = H^1_\et(C, \Ql(1))\cong H^1(C, \Q(1))\otimes \Ql$.
In \cite{hain_rational}, Hain computed the $\mathrm{GSp}(H)$-equivariant Lie algebra sections of 
$$
\Gr^W_\bullet \u^\prol_{\cC_{g,n}}/W_{-3}\to \Gr^W_\bullet \u^\prol_{g,n}/W_{-3}.
$$
There are $n$ sections that are induced by the tautological sections of the universal curve over $\M_{g,n/k}$ and there is an extra section due to the fact that $\Lambda^2\Lambda^3_0H$ does not contain the representation $\Lambda^2_0H$.  
In this section, we will review Hain's result on the weight-preserving Lie algebra sections and show that the extra section cannot be induced by the sections of $\Gr^W_\bullet \u^\prol_{\cC_{g,n}}/W_{-5}\to \Gr^W_\bullet \u^\prol_{g,n}/W_{-5}$, using the result from the hyperelliptic case in \cite{wat_hyp_univ}. 

\begin{proposition}[{\cite[Prop.~10.4]{hain_rational}}]\label{lie algebra sections for g >3}
If $g\geq 4$, the restrictions of the $\Sp(H)$-equivariant weight-preserving graded Lie algebra sections of $\Gr^W_\bullet\u^\prol_{\cC_{g,n}}/W_{-3}\to \Gr^W_\bullet\u^\prol_{g,n}/W_{-3}$ to the $\Gr^W_{-1}$-components are given by
$$
\zeta_i:\Lambda^3_0H\oplus \bigoplus_{j=1}^n H_j\to \Lambda^3_0H \oplus \bigoplus_{j=0}^n H_j,\,\,\,\,(v;x_1,\ldots, x_n)\mapsto (v; x_i, x_1, \ldots, x_n)  
$$ for $i =1, \ldots, n$.
If $g = 3$, in addition to the $n$ sections, the restriction to $\Gr^W_{-1}$ admits a section given by
$$
\zeta_0: (v;x_1,\ldots, x_n)\mapsto (v; 0, x_1, \ldots, x_n).
$$
\end{proposition}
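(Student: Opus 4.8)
The plan is to reduce the classification to a question at weight $-1$, solve the resulting $\Sp(H)$-equivariant splitting problem, and then cut down the answer by the single quadratic constraint coming from weight $-2$.

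Since all the objects here are obtained from their rational counterparts by base change from $\Q$ to $\Ql$, and $\Sp(H_{\Ql})$-equivariance is inherited from $\Sp(H_\Q)$-equivariance, it suffices to classify the weight-preserving graded Lie algebra sections of $\Gr^W_\bullet\u_{\cC_{g,n}}/W_{-3}\to\Gr^W_\bullet\u_{g,n}/W_{-3}$ over $\Q$. Because the weight filtration on $\u_{g,n}$ and $\u_{\cC_{g,n}}$ coincides with the lower central series for $g\ge 3$, these truncations are two-step graded Lie algebras generated in weight $-1$: each is the quotient of the part of degree $\le 2$ of the free graded Lie algebra on $H_1$ by a space of quadratic relations $R\subseteq\Lambda^2 H_1$ (the weight-$(-2)$ part of $\im\varphi$). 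Hence a weight-preserving graded Lie section $s$ is determined by its weight-$(-1)$ component $s_{-1}\colon H_1(\u_{g,n})\to H_1(\u_{\cC_{g,n}})$, which must split the canonical projection $p_1\colon H_1(\u_{\cC_{g,n}})\to H_1(\u_{g,n})$; and such an $s_{-1}$ extends to a (then unique) graded Lie section of the truncations if and only if $\Lambda^2 s_{-1}$ maps $R_{g,n}$ into $R_{\cC_{g,n}}$, in which case the induced $s_{-2}$ automatically splits $\Gr^W_{-2}$ as well.

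Next I would classify the $\Sp(H)$-equivariant splittings $s_{-1}$. By Proposition~\ref{ab of ugn} and Corollary~\ref{H1 decomp for the univ curve}, $H_1(\u_{g,n})\cong\Lambda^3_0H\oplus\bigoplus_{j=1}^n H_j$ and $H_1(\u_{\cC_{g,n}})\cong\Lambda^3_0H\oplus\bigoplus_{j=0}^n H_j$, with $H_0=H_1(\p_g)=\ker p_1$ and $p_1$ the identity on the remaining summands. Since $\Lambda^3_0H$ and $H$ are non-isomorphic irreducibles, Schur's lemma forces any equivariant splitting $s_{-1}$ to be the identity on $\Lambda^3_0H$ and of the form $x\mapsto(c_j x,\,x)\in H_0\oplus H_j$ on each $H_j$ with $j\ge 1$, for scalars $c_1,\dots,c_n\in\Q$. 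Thus the equivariant weight-$(-1)$ splittings form a copy of $\Q^n$, with $\zeta_i$ corresponding to the standard basis vector $e_i$ and $\zeta_0$ to the origin. Each $\zeta_i$ genuinely underlies a graded Lie section, since the $i$-th tautological section $x_i\colon\M_{g,n/k}\to\cC_{g,n/k}$ of the universal curve splits the corresponding homotopy exact sequence, hence splits the relative completions by naturality, hence splits their graded Lie algebras.

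It then remains to decide which $(c_1,\dots,c_n)$ satisfy $\Lambda^2 s_{-1}(R_{g,n})\subseteq R_{\cC_{g,n}}$. I would test this $\Sp(H)$-isotypic component by component, working in the decomposition of $\Lambda^2 H_1(\u_{\cC_{g,n}})$ organized by the summands $\Lambda^3_0H$ and $H_0,\dots,H_n$, and using the description of the weight-$(-2)$ relations of $\u_{g,n}$ coming from \cite{hain_infpretor} together with the elementary fact that the sole degree-$2$ relation of $\p_g$ is the symplectic form, so that $\Gr^W_{-2}\p_g\cong\Lambda^2_0H$ and its image lies in $\ker(\Gr^W_{-2}\u_{\cC_{g,n}}\to\Gr^W_{-2}\u_{g,n})$. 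The components involving two distinct strands, or one strand paired with itself, contribute relations that force $(c_1,\dots,c_n)$ to be $0$ or a standard basis vector. Whether the origin---that is, $\zeta_0$---is admissible is then governed by the $\Lambda^2_0H$-isotypic component: the image of the pure $\Lambda^3_0H$-part of $R_{g,n}$ must avoid the copy of $\Lambda^2_0H\cong\Gr^W_{-2}\p_g$ in that kernel, which is automatic exactly when $\Lambda^2\Lambda^3_0H$ contains no copy of $\Lambda^2_0H$. An $\Sp(H)$-plethysm computation shows this non-containment holds for $g=3$ and fails for $g\ge 4$; so for $g\ge 4$ this component obstructs $\zeta_0$ while admitting $\zeta_1,\dots,\zeta_n$, and for $g=3$ it vanishes and one checks that no other component obstructs $\zeta_0$. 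The main obstacle is exactly this last step: marshalling the presentations of $\u_{g,n}$, $\u_{\cC_{g,n}}$, and $\p_g$ with enough precision to control every $\Sp(H)$-component of the weight-$(-2)$ relations, and establishing the $g$-dependence of the decomposition of $\Lambda^2\Lambda^3_0H$.
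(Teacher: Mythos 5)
This proposition is not proved in the paper at all: it is quoted verbatim from Hain \cite[Prop.~10.4]{hain_rational}, so the only thing to compare your argument with is Hain's own computation, which your outline essentially shadows. The sound parts of your plan are the reduction steps: base change to $\Q$, the observation that mod $W_{-3}$ both Lie algebras are two-step and generated in weight $-1$ (so a graded section is its weight $-1$ part subject to ``relations go to relations''), the Schur-lemma parametrization of equivariant weight $-1$ splittings by $(c_1,\dots,c_n)\in\Q^n$ with $\zeta_i\leftrightarrow e_i$ and $\zeta_0\leftrightarrow 0$, the fact that the tautological sections realize $\zeta_1,\dots,\zeta_n$, and the plethysm fact that $\Lambda^2_0H$ occurs in $\Lambda^2\Lambda^3_0H$ for $g\ge 4$ but not for $g=3$ (for $g=3$ one has $\Lambda^2\Lambda^3_0H\cong V_{[2,2]}\oplus\Q$, of dimension $90+1=91$).

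The genuine gap is that the decisive content of the proposition is asserted rather than proved. You never exhibit the quadratic relations $R_{g,n}\subset\Lambda^2 H_1(\u^\prol_{g,n})$ (the configuration-space relations among the $H_j$, the relations of $\u_g$ inside $\Lambda^2\Lambda^3_0H$, and the mixed $\Lambda^3_0H\otimes H_j$ pieces), nor the bracket maps into $\Gr^W_{-2}\u^\prol_{\cC_{g,n}}$, so the claims that ``the strand components force $(c_1,\dots,c_n)$ to be $0$ or a standard basis vector'' and that ``no other component obstructs $\zeta_0$ when $g=3$'' are exactly the computation that constitutes Hain's proof, not consequences of anything you have established. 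Moreover your criterion for $\zeta_0$ is too strong as stated: non-containment of $\Lambda^2_0H$ in $\Lambda^2\Lambda^3_0H$ does imply the vanishing of the obstruction coming from that summand, but the converse direction is not automatic --- for $g\ge 4$ you must still show that the $\Lambda^2_0H$-isotypic part of the relations is actually present in $R_{g,n}$ and maps onto $\Gr^W_{-2}\p_g\cong\Lambda^2_0H$ nontrivially under the bracket of $\u^\prol_{\cC_{g,n}}$, and you must also account for the copies of $\Lambda^2_0H$ inside $\Lambda^3_0H\otimes H_j$ and $H_i\wedge H_j$, which your dichotomy between ``strand components'' and the ``pure $\Lambda^3_0H$-part'' does not cleanly separate. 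As it stands the proposal is a reasonable roadmap to Hain's argument, but the step you yourself flag as ``the main obstacle'' is precisely the proof, and it is missing; for the purposes of this paper the correct move is simply to cite \cite[Prop.~10.4]{hain_rational}, as the authors do.
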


In the second author's paper \cite{wat_hyp_univ}, the weighted completion of the algebraic fundamental group $\pi_1^\alg(\H_{g,n/k})$ was used to study the sections of the universal hyperelliptic curve $\cC_{\H_{g,n/k}}\to \H_{g,n/k}$, where $k$ is a field of characteristic zero such that the image of $\ell$-adic cyclotomic character $G_k\to \Z_\ell^\times$ of the Galois group of $k$ is infinite for some prime number $\ell$. The Lie algebra of the completion admits a natural weight filtration, which plays a key role in determining the study of the sections of the universal hyperelliptic curves. 

In this paper, we use the following analogous result for the Lie algebra of the continuous relative completion of $\pi_1^\alg(\H_{g,n/\C})\cong \widehat{\Delta_{g,n}}$. 

\begin{theorem}[{\cite[Thm.~8.4, Cor.~8.6]{wat_hyp_univ}}]\label{hyp weight -1}
If $g\geq 3$, the restrictions of the $\Sp(H)$-equivariant weight-preserving graded Lie algebra sections of $\Gr^W_\bullet\v^\prol_{\cC_{g,n}}/W_{-5}\to \Gr^W_\bullet\v^\prol_{g,n}/W_{-5}$ to the $\Gr^W_{-1}$-components are given by $\zeta^{\pm}_i$
$$
\zeta^{\pm}_i: \bigoplus_{j=1}^n H_j\to \bigoplus_{j=0}^n H_j,\,\,\,\,(x_1,\ldots, x_n)\mapsto (\pm x_i, x_1, \ldots, x_n).
$$
\end{theorem}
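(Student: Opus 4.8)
The plan is to reduce the statement to a finite bracket computation in low weights and then carry it out. By Lemma~\ref{H_1 decomp for rel comp of hyp mcg} (and base change to $\Q_\ell$), one has $\Sp(H)$-identifications $\Gr^W_{-1}\v^\prol_{g,n}\cong\bigoplus_{j=1}^nH_j$ and $\Gr^W_{-1}\v^\prol_{\cC_{g,n}}\cong\bigoplus_{j=0}^nH_j$ under which the projection of the theorem restricts, on $\Gr^W_{-1}$, to the map forgetting the $H_0$-summand. Since $\Hom_{\Sp(H)}(H,H)=\Q_\ell$ by Schur's lemma, every $\Sp(H)$-equivariant section of this projection on $\Gr^W_{-1}$ has the form
\[
(x_1,\dots,x_n)\ \longmapsto\ \Bigl(\textstyle\sum_{j=1}^n a_jx_j;\ x_1,\dots,x_n\Bigr),\qquad(a_1,\dots,a_n)\in\Q_\ell^{\,n},
\]
and the theorem becomes the claim that such a map is the $\Gr^W_{-1}$-restriction of a weight-preserving graded Lie algebra section of $\Gr^W_\bullet\v^\prol_{\cC_{g,n}}/W_{-5}\to\Gr^W_\bullet\v^\prol_{g,n}/W_{-5}$ exactly when $(a_1,\dots,a_n)=\pm e_i$ for some $i$ (the $i$th standard basis vector), in which case the induced section restricts on $\Gr^W_{-1}$ to $\zeta_i^{\pm}$.

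The existence of the $2n$ sections $\zeta_i^{\pm}$ is the easy direction. The $n$ tautological sections of the universal hyperelliptic curve $\cC^\an_{\H_{g,n}}\to\H^\an_{g,n}$ — legitimate here because on the \emph{universal} curve the extra point is permitted to coincide with a marked point — induce graded Lie algebra sections whose $\Gr^W_{-1}$-parts are the $\zeta_i^{+}$. Post-composing each with the fibrewise hyperelliptic involution, which is canonical and acts as $-1$ on $H=H_1(C,\Q)$, produces sections with $\Gr^W_{-1}$-part $\zeta_i^{-}$. This is the source of the ``$\pm$'' and has no counterpart over $\M_{g,n}$, where a general fibre has no involution.

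The converse is the substance. Because $\v^\prol_{g,n}$ is generated in weights $-1$ and $-2$, with weight $-2$ generators $H_1(\v^\prol_g)$ pure of weight $-2$ by \cite[Prop.~6.9]{wat_rk_hyp_univ}, a graded section $\sigma$ is pinned down by a vector $(a_j)$ as above together with an $\Sp(H)$-map $H_1(\v^\prol_g)\to\Gr^W_{-2}\v^\prol_{\cC_{g,n}}$, subject to $\sigma$ killing the defining relations of $\v^\prol_{g,n}$ in weights $-2,-3,-4$. I would analyse these relations using the extensions
\[
0\to\p^\prol_{g,n}\to\v^\prol_{g,n}\to\v^\prol_g\to0,\qquad 0\to\p^\prol_g\to\v^\prol_{\cC_{g,n}}\to\v^\prol_{g,n}\to0,
\]
which express everything through the low-weight structure of the surface pure-braid Lie algebras $\p^\prol_{g,n},\p^\prol_g$ and of $\Gr^W_\bullet\v^\prol_g$. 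Applying $\sigma$ to the weight $-2$ ``surface relation'' attached to each strand and tracking the $H_0$-components of the resulting brackets — using that strand $0$, being the unconstrained extra point, satisfies only the bare genus-$g$ surface relation with no linking terms, whereas each marked strand $j$ links with the others — should cut $(a_1,\dots,a_n)$ down to the finite list $\{0\}\cup\{\pm e_i:1\le i\le n\}$.

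The remaining point, and the step I expect to be the main obstacle, is to exclude the ``constant'' vector $(a_j)=0$, i.e.\ the candidate $\zeta_0\colon(x_1,\dots,x_n)\mapsto(0;x_1,\dots,x_n)$ — the hyperelliptic analogue of the exceptional genus-$3$ section in Proposition~\ref{lie algebra sections for g >3}, which survives modulo $W_{-4}$; its obstruction lives in $\Gr^W_{-4}$, which is precisely why the statement must be phrased modulo $W_{-5}$ rather than modulo $W_{-3}$. Verifying that this obstruction is nonzero requires the precise $\Sp(H)$-decompositions of $\Gr^W_{-3}\v^\prol_g$ and $\Gr^W_{-4}\v^\prol_g$, and is where Corollary~\ref{H and nichi comp in H_1 of v_g} — which forbids $H$ and $\Lambda^2_0H$ inside $H_1(\v_g)$ — is used; I would defer the remaining bracket bookkeeping to \cite[Thm.~8.4, Cor.~8.6]{wat_hyp_univ}. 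Steps one and two are formal; all the difficulty is concentrated in controlling $\Gr^W_{-3}$ and $\Gr^W_{-4}$ of $\v^\prol_{\cC_{g,n}}$ precisely enough to kill the constant candidate.
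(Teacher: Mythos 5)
Your outline is not compared against a genuine in-paper proof, because the paper does not prove this statement at all: it is imported verbatim from the second author's earlier work, cited as \cite[Thm.~8.4, Cor.~8.6]{wat_hyp_univ}, and the only justification supplied here is the short paragraph following the theorem listing the facts that let that result transfer to the continuous relative completion over $\Q_\ell$ — namely that the weight filtration on $\p^\prol_{g,n}$ coincides with the lower central series, that $H_1(\v^\prol_g)$ is pure of weight $-2$ (so $\v^\prol_g$ has only even weights), and the vanishing $\Hom_{\Sp}(H_1(\v_g),H)=\Hom_{\Sp}(H_1(\v_g),\Lambda^2_0H)=0$ of Lemma~\ref{tanaka's comp} and Corollary~\ref{H and nichi comp in H_1 of v_g}. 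Your framing is broadly consistent with this: the reduction via Schur's lemma to a coefficient vector $(a_1,\ldots,a_n)$ is correct, the evenness of the weights of $\v^\prol_g$ is indeed the reason the analysis jumps from weight $-2$ to weight $-4$ (note in particular that $\Gr^W_{-3}\v^\prol_g=0$, so there is no decomposition there to compute), and your existence argument — tautological sections composed with the fibrewise hyperelliptic involution, which acts as $-1$ on $H$ — is a plausible and attractive account of where the $\pm$ comes from.

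The genuine gap is that the converse direction, which is the entire content of the theorem as it is used in this paper (Proposition~\ref{lie algebra sections for g > 2 in w = -1} and the proof of Theorem~2 need only that \emph{every} such section restricts to some $\zeta_i^{\pm}$), is never actually carried out. You assert that applying a candidate section to the weight $-2$ surface relations ``should'' cut $(a_1,\ldots,a_n)$ down to $\{0\}\cup\{\pm e_i\}$, and that the constant candidate is obstructed in $\Gr^W_{-4}$, and then you defer ``the remaining bracket bookkeeping'' to \cite{wat_hyp_univ} — that is, to the very statement being proved, so nothing is established. To make this self-contained you would have to exhibit the bracket relations in $\Gr^W_{-2}$ and $\Gr^W_{-4}$ of $\v^\prol_{\cC_{g,n}}$ (using $\Gr^W_{-2}\p^\prol_g\cong\Lambda^2H/\langle q\rangle$, the mixed brackets with $H_0$, and the unknown $\Sp(H)$-module $H_1(\v^\prol_g)$ controlled only through the vanishing statements above) and verify both claims; in particular the elimination of arbitrary linear combinations at weight $-2$, and the nonvanishing of the weight $-4$ obstruction to $\zeta_0$, are precisely the computations you have skipped. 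Otherwise the honest course is the one the paper takes: cite \cite{wat_hyp_univ} for the theorem and justify only the passage to the continuous setting.
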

The key facts that allow us to deduce the above result, analogous to the corresponding result via weight filtrations from weighted completion, are as follows. First, the weight filtrations on $\p^\prol_{g,n}$ induced by weighted completion and by Hodge theory both coincide with the lower central series and hence agree. Second, by \cite[Prop.~6.9]{wat_rk_hyp_univ} the abelianization $H_1(\v^\prol_g)$ is pure of weight $-2$, implying that the Lie algebra $\v^\prol_g$ admits only even weights; this is the analogous result for the abelianization of the Lie algebra of the unipotent radical of the weighted completion of $\pi_1^\alg(\H_{g,n/k})$ (see \cite[Prop.~7.7]{wat_hyp_univ}). Finally, by Tanaka’s computation (see Lemma \ref{tanaka's comp}), for $g \geq 2$ we have $\Hom_{\Sp}(H_1(\v_g^\prol), \Lambda^2_0 H) = 0$.

Let $k$ be an algebraically closed field of characteristic 0.
Consider a continuous section $h$ of the natural projection
\[ \pi_1^\alg(\cC_{\H_{g,n}/k}) \to \pi_1^\alg(\H_{g,n/k}). \]
By the universal property of continuous relative completion, this section induces a section $d\tilde{h}$ of the surjection $\v^\prol_{\cC_{g,n}} \twoheadrightarrow \v^\prol_{g,n}$.

\begin{lemma}\label{weight preserved for hyp}
If $g\geq 3$ and $n\geq 0$, the section $dh$ induces a weight-preserving $\Sp(H)$-equivariant graded Lie algebra section of $\Gr^W_\bullet\v^\prol_{\cC_{g,n}}\to \Gr^W_\bullet\v^\prol_{g,n}$.
\end{lemma}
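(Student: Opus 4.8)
The plan is to transport $h$ through the universal property of continuous relative completion and then read off weight compatibility from the abelianizations computed in Lemma~\ref{H_1 decomp for rel comp of hyp mcg} together with Tanaka's vanishing in Corollary~\ref{H and nichi comp in H_1 of v_g}. First I would record that composing $h$ with the canonical homomorphism $\pi_1^\alg(\cC_{\H_{g,n}/k})\to\cD^\prol_{\cC_{g,n}}(\Q_\ell)$ and applying the universal property yields a morphism $\Phi\colon\cD^\prol_{g,n}\to\cD^\prol_{\cC_{g,n}}$ of proalgebraic groups commuting with the projections to $\Sp(H)$; since the monodromy of $\cC_{\H_{g,n}/k}$ factors through that of $\H_{g,n/k}$ and $h$ is a section, the uniqueness clause in the universal property forces $\Phi$ to be a section of the projection $\cD^\prol_{\cC_{g,n}}\to\cD^\prol_{g,n}$, and on unipotent radicals it restricts to the section $d\tilde h\colon\v^\prol_{g,n}\to\v^\prol_{\cC_{g,n}}$ noted above. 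Because a prounipotent group acts trivially on its own abelianization, $H_1$ of the prounipotent radical of a relative completion is canonically an $\Sp(H)$-module, so $\Phi$ being a morphism over $\Sp(H)$ forces the induced splitting $\bar h_*\colon H_1(\v^\prol_{g,n})\to H_1(\v^\prol_{\cC_{g,n}})$ of the surjection $H_1(\v^\prol_{\cC_{g,n}})\twoheadrightarrow H_1(\v^\prol_{g,n})$ (which is strict for the weight filtrations, being induced by a morphism of MHS) to be $\Sp(H)$-equivariant.

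The crucial step is that $\bar h_*$ is strict for the weight filtration. By Lemma~\ref{H_1 decomp for rel comp of hyp mcg}, $W_{-2}H_1(\v^\prol_{g,n})=H_1(\v^\prol_g)$ is pure of weight $-2$, $\Gr^W_{-1}H_1(\v^\prol_{\cC_{g,n}})\cong\bigoplus_{j=0}^n H_j$ is a sum of copies of $H$, and $W_{-2}H_1(\v^\prol_{\cC_{g,n}})=H_1(\v^\prol_g)$. Composing $\bar h_*$ with the projection onto $\Gr^W_{-1}H_1(\v^\prol_{\cC_{g,n}})$ and restricting to $W_{-2}H_1(\v^\prol_{g,n})$ gives an $\Sp(H)$-equivariant map $H_1(\v^\prol_g)\to\bigoplus_{j=0}^n H_j$, which vanishes since $\Hom_{\Sp}(H_1(\v^\prol_g),H)=0$ by Corollary~\ref{H and nichi comp in H_1 of v_g}. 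Hence $\bar h_*\bigl(W_{-2}H_1(\v^\prol_{g,n})\bigr)\subseteq W_{-2}H_1(\v^\prol_{\cC_{g,n}})$, and as $\bar h_*$ trivially respects $W_{-1}$, it is filtered.

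Next I would propagate filteredness to all of $\v^\prol_{g,n}$. Using Lemma~\ref{natural weight splitting}, choose a weight-graded linear lift $s\colon H_1(\v^\prol_{g,n})\to\v^\prol_{g,n}$; its image topologically generates $\v^\prol_{g,n}$ and lies in weights $-1$ and $-2$. The weight $-1$ part of $s(H_1)$ lands under $d\tilde h$ in $\v^\prol_{\cC_{g,n}}=W_{-1}\v^\prol_{\cC_{g,n}}$ automatically, while by the previous paragraph the image of the weight $-2$ part has trivial $\Gr^W_{-1}$-component and therefore lies in $W_{-2}\v^\prol_{\cC_{g,n}}$. Because there is a natural surjection $\Gr^W_\bullet H_1(\v^\prol_{g,n})\twoheadrightarrow H_1(\Gr^W_\bullet\v^\prol_{g,n})$ whose source is concentrated in weights $-1$ and $-2$, the graded Lie algebra $\Gr^W_\bullet\v^\prol_{g,n}$ is generated in degrees $-1$ and $-2$; hence every $W_{-m}\v^\prol_{g,n}$ is topologically spanned by iterated brackets of elements of $s(H_1)$ whose weights sum to at most $-m$. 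Since $d\tilde h$ is a Lie algebra homomorphism and $[W_a,W_b]\subseteq W_{a+b}$ in $\v^\prol_{\cC_{g,n}}$, this gives $d\tilde h(W_{-m}\v^\prol_{g,n})\subseteq W_{-m}\v^\prol_{\cC_{g,n}}$ for all $m$. Thus $d\tilde h$ is filtered, and being a section of the weight-strict surjection $\v^\prol_{\cC_{g,n}}\twoheadrightarrow\v^\prol_{g,n}$ it induces a weight-preserving graded Lie algebra section $\Gr^W_\bullet(d\tilde h)\colon\Gr^W_\bullet\v^\prol_{g,n}\to\Gr^W_\bullet\v^\prol_{\cC_{g,n}}$.

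Finally, for $\Sp(H)$-equivariance of the graded section I would compare Levi splittings: writing $\d^\prol_{g,n}\cong\sp(H)\ltimes\v^\prol_{g,n}$ and $\d^\prol_{\cC_{g,n}}\cong\sp(H)\ltimes\v^\prol_{\cC_{g,n}}$, the Lie algebra map $d\Phi$ carries the copy of $\sp(H)$ in $\d^\prol_{g,n}$ onto a Levi subalgebra of $\d^\prol_{\cC_{g,n}}$ that is conjugate to the chosen one by $\operatorname{Ad}(\exp\xi)$ for some $\xi\in\v^\prol_{\cC_{g,n}}=W_{-1}\v^\prol_{\cC_{g,n}}$. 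Since $\ad(\xi)$ strictly lowers weights, $\operatorname{Ad}(\exp\xi)-\operatorname{id}$ maps $W_m$ into $W_{m-1}$ and hence $\operatorname{Ad}(\exp\xi)$ acts as the identity on $\Gr^W_\bullet\v^\prol_{\cC_{g,n}}$; therefore $\Gr^W_\bullet(d\tilde h)$ agrees with the graded section attached to a Levi-preserving, hence $\sp(H)$-equivariant, section, and $\sp(H)$-equivariance upgrades to $\Sp(H)$-equivariance because $\Sp(H)$ is connected. I expect the main obstacle to be the middle step: isolating that the permutation-independent input $\Hom_{\Sp}(H_1(\v^\prol_g),H)=0$, a consequence of Tanaka's computation, is exactly what prevents the weight $-2$ generators of $H_1$ from being mixed into weight $-1$ by the section; the surrounding arguments are routine manipulations of weight filtrations and of Levi conjugacy.
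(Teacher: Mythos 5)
Your argument is correct and follows essentially the same route as the paper: both reduce to the $\Sp(H)$-equivariant splitting on $H_1$, use the Tanaka-derived vanishing $\Hom_{\Sp}(H_1(\v^\prol_g),H)=0$ to keep the weight $-2$ part of $H_1(\v^\prol_{g,n})$ from leaking into weight $-1$, and then propagate to a weight-preserving graded section using that these Lie algebras are topologically generated by weight-homogeneous lifts of $H_1$ --- the paper doing this through $\Gr^L_\bullet$ of the naturally split Lie algebras and deducing equivariance from the $\Sp(H)$-equivariance of $d\tilde h^\ab$, you through filteredness plus Levi conjugacy and the triviality of $\operatorname{Ad}(\exp\xi)$ on $\Gr^W_\bullet$. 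Two small repairs: the natural comparison map runs $H_1(\Gr^W_\bullet\v^\prol_{g,n})\twoheadrightarrow \Gr^W_\bullet H_1(\v^\prol_{g,n})$, not the reverse, although generation in weights $-1$ and $-2$ still follows from the bracket-compatible splitting of Lemma~\ref{natural weight splitting}; and after conjugating by the unipotent element your homomorphism is no longer literally a section (its composite with the projection is an inner automorphism), which is harmless since sectionhood of the graded map already follows from applying $\Gr^W_\bullet$ to the identity $d\tilde{\pi}^{\hyp,\prol}\circ d\tilde h=\id$.
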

\begin{proof}
Each continuous section $h$ of  
\[
\pi^\hyp:=\pi^\hyp_{g,n/k\ast}:\pi_1^\alg(\cC_{\H_{g,n}/k})\to \pi_1^\alg(\H_{g,n/k})
\]
induces a section $\tilde{h}$ of $\tilde{\pi}^{\hyp, \prol}:\cD^\prol_{\cC_{g,n}}\to \cD^\prol_{g,n}$:
\[
\tilde{h}:\cD^\prol_{g,n}\to \cD^\prol_{\cC_{g,n}}.
\]
We use the same notation $\tilde{h}$ for its restriction to $\cV^\prol_{g,n}$. There is a commutative diagram
\[
\xymatrix{
1\ar[r]&\cV^\prol_{\cC_{g,n}}\ar[r]\ar[d]&\cD^\prol_{\cC_{g,n}}\ar[r]\ar[d]&\Sp(H)\ar[r]\ar@{=}[d]&1\\
1\ar[r]&\cV^\prol_{g,n}\ar[r]\ar@/_/[u]_{\tilde{h}}&\cD^\prol_{g,n}\ar[r]\ar@/_/[u]_{\tilde{h}}&\Sp(H)\ar[r]&1.
}
\]
The abelianization $\tilde{h}^\ab$ of $\tilde{h}$ is $\Sp(H)$-equivariant, and so is the section  $d\tilde{h}^\ab$ of 
\[
(d\tilde{\pi}^{\hyp,\prol})^\ab: H_1(\v^\prol_{\cC_{g,n}})\to H_1(\v^\prol_{g,n}).
\]

Consider the exact sequence induced by $\pi^\hyp$:
\[
0\to \p_g^\prol\to \v^\prol_{\cC_{g,n}}\to\v^\prol_{g,n}\to 0.
\]
Since $\p_g^\prol$ is center-free \textcolor{black}{\cite{NTU}}, the adjoint action
\[
\p_g^\prol\to \v^\prol_{\cC_{g,n}}\overset{\ad}{\longrightarrow} \Der\p_g^\prol
\]
is injective. The exactness of the functor $\Gr^W_\bullet$ implies that  
\[
\Gr^W_{-1}\p_g^\prol = H_1(\p_g^\prol) \longrightarrow \Gr^W_{-1}\Der\p_g^\prol
\]
is also injective. As this map factors through
\[
H_1(\v^\prol_{\cC_{g,n}})\to \Gr^W_{-1}\Der \p_g^\prol,
\]
we obtain the short exact sequence
\begin{equation}\label{ab eq of v}
0\to H_1(\p_g^\prol)\to H_1(\v^\prol_{\cC_{g,n}})\to H_1(\v^\prol_{g,n})\to 0.
\end{equation}

By \cite[Prop.~6.9]{wat_rk_hyp_univ}, $H_1(\v^\prol_g)$ is pure of weight $-2$. By Lemma \ref{H_1 decomp for rel comp of hyp mcg}, we have canonical decompositions over $\Ql$: 

\[
H_1(\v^\prol_{\cC_{g,n}})\cong\Gr^W_{-1}H_1(\v^\prol_{\cC_{g,n}})\oplus\Gr^W_{-2}H_1(\v^\prol_{\cC_{g,n}}) \cong H^{n+1} \oplus H_1(\v^\prol_g),
\]
and
\[
H_1(\v^\prol_{g,n}) \cong \Gr^W_{-1}H_1(\v^\prol_{g,n})\oplus \Gr^W_{-2}H_1(\v^\prol_{g,n})\cong H^n \oplus H_1(\v^\prol_g).
\]
Thus, \eqref{ab eq of v} becomes
\[
0\to H_0\to H^{n+1} \oplus H_1(\v^\prol_g)\to H^{n} \oplus H_1(\v^\prol_g)\to 0.
\]

Since $d\tilde{h}^\ab$ is a section of $(d\tilde{\pi}^{\hyp,\prol})^\ab$, it cannot map the $H^n$ component into $H_1(\v_g)$.  
By Lemma \ref{tanaka's comp},
there is no $\Sp(H)$-equivariant map $H_1(\v^\prol_g) \to H$. Hence the restriction of $d\tilde{h}^\ab$ to $H_1(\v^\prol_g)\subset H_1(\v^\prol_{g,n})$ does not land in $H^{n+1}$. It follows that $d\tilde{h}^\ab$ is weight-preserving.

\textcolor{black}{From Lemma \ref{natural weight splitting}, we obtain the following natural isomorphisms over $\Ql$}
\[
\v^\prol_{\cC_{g,n}}\cong \prod_{m\leq -1} \Gr^W_m\v^\prol_{\cC_{g,n}},
\quad
\v^\prol_{g,n}\cong \prod_{m\leq -1} \Gr^W_m\v^\prol_{g,n},
\]
fitting into the commutative diagram
\[
\xymatrix{
\v^\prol_{\cC_{g,n}}\ar[r]^-\cong\ar[d]^{d\tilde{\pi}^{\hyp, \prol}}&\prod_{m\leq -1} \Gr^W_m\v^\prol_{\cC_{g,n}}\ar[d]^{\Gr^W d\tilde{\pi}^{\hyp, \prol}}\\
\v^\prol_{g,n}\ar[r]^-\cong&\prod_{m\leq -1} \Gr^W_m\v^\prol_{g,n}.
}
\]

The section $d\tilde{h}$ induces an $\Sp(H)$-equivariant graded Lie algebra section
\[
\Gr^L_\bullet d\tilde{h} : \Gr^L_\bullet \prod_{m\leq -1} \Gr^W_m\v^\prol_{g,n} \longrightarrow \Gr^L_\bullet \prod_{m\leq -1} \Gr^W_m\v^\prol_{\cC_{g,n}}.
\]
For each bracket length $s$, the map
\[
\Gr^L_s d\tilde{h}: \Gr^L_s \prod_{m\leq -1} \Gr^W_m\v^\prol_{g,n} \longrightarrow \Gr^L_s \prod_{m\leq -1} \Gr^W_m\v^\prol_{\cC_{g,n}}
\]
is induced by
\[
\otimes^s H_1(\v^\prol_{g,n}) \longrightarrow \otimes^s H_1(\v^\prol_{\cC_{g,n}}),
\]
which itself is induced by $d\tilde{h}^\ab$. Since $d\tilde{h}^\ab$ is weight-preserving, so is $\Gr^L_s d\tilde{h}$.
Because $d\tilde{h}^\ab$ is $\Sp(H)$-equivariant, each $\Gr^L_s d\tilde{h}$ is likewise weight-preserving and $\Sp(H)$-equivariant.  
Thus, the graded Lie algebra map $\Gr^L_\bullet d\tilde{h}$ is $\Sp(H)$-equivariant and preserves the weight filtration. Observe that
$$
\Gr^W_\bullet\v^\prol_{g,n} =\Gr^L_\bullet \prod_{m\leq -1} \Gr^W_m\v^\prol_{g,n}\,\,\,\,\text{ and }\,\,\,\,\Gr^W_\bullet\v^\prol_{\cC_{g,n}} =\Gr^L_\bullet \prod_{m\leq -1} \Gr^W_m\v^\prol_{\cC_{g,n}}.
$$
We may therefore regard it as an $\Sp(H)$-equivariant graded Lie algebra section 
\[
\Gr^W_\bullet d\tilde{h}:  \Gr^W_\bullet\v^\prol_{g,n} \to \Gr^W_\bullet \v^\prol_{\cC_{g,n}}
\]
of $\Gr^W_\bullet d\tilde{\pi}^{\hyp, \prol}$.
\end{proof}
\begin{proposition} \label{lie algebra sections for g > 2 in w = -1} 
Let $s$ be a continuous section of $\pi_1^\alg(\cC_{g,n/k})\to \pi_1^\alg(\M_{g,n/k})$. If $g\geq 3$, then the induced section $\Gr^W_{-1}d\tilde{s}$ of $\Gr^W_{-1}\u^\prol_{\cC_{g,n}}\to\Gr^W_{-1}\u^\prol_{g,n}$ is equal to $\zeta_i$ for some $i=1,\ldots,n$.
\end{proposition}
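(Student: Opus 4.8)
The plan is to start from the section $s$ and track it through continuous relative completion. By the universal property, $s$ induces a homomorphism $\tilde s\colon\cG^\prol_{g,n}\to\cG^\prol_{\cC_{g,n}}$ splitting the natural projection, hence a Lie algebra section $d\tilde s\colon\u^\prol_{g,n}\to\u^\prol_{\cC_{g,n}}$ of the projection $\u^\prol_{\cC_{g,n}}\to\u^\prol_{g,n}$. First I would observe that the induced map on abelianizations $d\tilde s^{\ab}\colon H_1(\u^\prol_{g,n})\to H_1(\u^\prol_{\cC_{g,n}})$ is $\Sp(H)$-equivariant: the adjoint action of $\cG^\prol$ on the abelianization of its prounipotent radical factors through $\Sp(H)$, and $\tilde s$ commutes with the projections to $\Sp(H)$. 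Next, since $g\geq 3$, the weight filtrations on $\u^\prol_{g,n}$ and on $\u^\prol_{\cC_{g,n}}$ coincide with their lower central series (the Proposition following Lemma~\ref{natural weight splitting}, the remark after Corollary~\ref{H1 decomp for the univ curve}, and base change to $\Ql$); as any Lie algebra homomorphism preserves lower central series, $d\tilde s$ is automatically weight-preserving. Because $\Gr^W_\bullet\u^\prol_{g,n}$ is generated in weight $-1$, the resulting graded map $\Gr^W_\bullet d\tilde s$ is a weight-preserving $\Sp(H)$-equivariant graded Lie algebra section of $\Gr^W_\bullet\u^\prol_{\cC_{g,n}}\to\Gr^W_\bullet\u^\prol_{g,n}$, whose restriction to $\Gr^W_{-1}$ is exactly $d\tilde s^{\ab}$.

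Reducing modulo $W_{-3}$ and applying Hain's classification (Proposition~\ref{lie algebra sections for g >3}), I would conclude that $\Gr^W_{-1}d\tilde s=\zeta_i$ for some $i\in\{1,\dots,n\}$ when $g\geq 4$, which proves the proposition in that range. When $g=3$ the same classification leaves exactly one further possibility, $\Gr^W_{-1}d\tilde s=\zeta_0$, so the remainder of the argument is devoted to excluding it.

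To rule out $\zeta_0$ for $g=3$, I would pass to the hyperelliptic locus. Write $\iota\colon\Gr^W_\bullet\v^\prol_{g,n}\to\Gr^W_\bullet\u^\prol_{g,n}$ and $\iota'\colon\Gr^W_\bullet\v^\prol_{\cC_{g,n}}\to\Gr^W_\bullet\u^\prol_{\cC_{g,n}}$ for the maps induced by the closed immersion $\H_{g,n}\hookrightarrow\M_{g,n}$ (as in Remark~\ref{cd} and its analogue for the universal curves). In each weight the kernel of $\Gr^W_\bullet\u^\prol_{\cC_{g,n}}\to\Gr^W_\bullet\u^\prol_{g,n}$ is $\Gr^W_\bullet\p^\prol_g$, which also lies in $\iota'(\Gr^W_\bullet\v^\prol_{\cC_{g,n}})$ since $\p^\prol_g$ is equally the kernel of $\v^\prol_{\cC_{g,n}}\to\v^\prol_{g,n}$; together with the commutativity of the square relating $\iota,\iota'$ and the two projections, a short diagram chase shows that $\Gr^W_\bullet d\tilde s$ carries $\iota(\Gr^W_\bullet\v^\prol_{g,n})$ into $\iota'(\Gr^W_\bullet\v^\prol_{\cC_{g,n}})$ and restricts there to a section of the hyperelliptic projection. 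Transporting this back along $\iota,\iota'$ (which requires their injectivity through weight $-4$, i.e.\ modulo $W_{-5}$), one obtains a weight-preserving $\Sp(H)$-equivariant graded Lie algebra section of $\Gr^W_\bullet\v^\prol_{\cC_{g,n}}/W_{-5}\to\Gr^W_\bullet\v^\prol_{g,n}/W_{-5}$. Its restriction to $\Gr^W_{-1}$ is obtained from $\zeta_0$ by setting the $\Lambda^3_0H$ variable to zero, namely $(x_1,\dots,x_n)\mapsto(0;x_1,\dots,x_n)$, whose $H_0$-component vanishes. This contradicts Theorem~\ref{hyp weight -1}, by which the $\Gr^W_{-1}$-part of any such section is one of the maps $\zeta^{\pm}_i$, with $H_0$-component $\pm x_i\not\equiv 0$. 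Hence $\Gr^W_{-1}d\tilde s\neq\zeta_0$, and therefore $\Gr^W_{-1}d\tilde s=\zeta_i$ for some $i\in\{1,\dots,n\}$.

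The step I expect to be the main obstacle is this last one, and specifically the transport along $\iota$ and $\iota'$: one needs that $\v^\prol_{g,n}\to\u^\prol_{g,n}$ and $\v^\prol_{\cC_{g,n}}\to\u^\prol_{\cC_{g,n}}$ are injective through weight $-4$, so that the restricted section is genuinely a section of the object appearing in Theorem~\ref{hyp weight -1} rather than of a proper quotient of it. Should this injectivity not be available in the required form, an alternative route to excluding $\zeta_0$ is a direct bracket computation one step deeper in the weight filtration: Hain's extra section $\zeta_0$ exists precisely because $\Lambda^2\Lambda^3_0H$ contains no copy of $\Lambda^2_0H$, and combining this with Tanaka's vanishing $\Hom_{\Sp}(H_1(\v_g),\Lambda^2_0H)=0$ (Corollary~\ref{H and nichi comp in H_1 of v_g}) and the description of $H_1(\v^\prol_{\cC_{g,n}})$ used in Lemma~\ref{weight preserved for hyp} should force the same contradiction within $\Gr^W_\bullet\u^\prol/W_{-5}$ without explicitly restricting to the hyperelliptic locus.
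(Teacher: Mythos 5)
Your argument for $g\geq 4$ is exactly the paper's: the weight filtrations on $\u^\prol_{g,n}$ and $\u^\prol_{\cC_{g,n}}$ coincide with the lower central series, so $d\tilde s$ induces an $\Sp(H)$-equivariant graded section, and Proposition~\ref{lie algebra sections for g >3} pins down $\Gr^W_{-1}d\tilde s=\zeta_i$. The gap is in your exclusion of $\zeta_0$ for $g=3$, at precisely the step you flagged. Your transport of $\Gr^W_\bullet d\tilde s$ along $\iota$ and $\iota'$ requires that $\Gr^W_\bullet\v^\prol_{g,n}\to\Gr^W_\bullet\u^\prol_{g,n}$ and $\Gr^W_\bullet\v^\prol_{\cC_{g,n}}\to\Gr^W_\bullet\u^\prol_{\cC_{g,n}}$ be injective through weight $-4$; without this you only obtain a section over the images $\iota(\Gr^W_\bullet\v^\prol_{g,n})$, which is a quotient of $\Gr^W_\bullet\v^\prol_{g,n}/W_{-5}$, and Theorem~\ref{hyp weight -1} says nothing about sections of such a quotient. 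This injectivity is nowhere established in the paper (relative completion is not left exact, and already in weight $-2$ the generator $H_1(\v_g)$, which is pure of weight $-2$, must map into the bracket-generated part of $\Gr^W_{-2}\u$, with no injectivity statement available), so the main route of your $g=3$ argument does not close. Your fallback sketch does not repair it: Tanaka's vanishing and Corollary~\ref{H and nichi comp in H_1 of v_g} are statements about the hyperelliptic mapping class group, so they cannot be invoked "without explicitly restricting to the hyperelliptic locus," and no concrete bracket computation is given.

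The paper avoids this entirely by passing to the hyperelliptic locus at the level of fundamental groups rather than graded Lie algebras: since the square of algebraic fundamental groups for $\cC_{\H_{g,n}/\bar k}\to\H_{g,n/\bar k}$ over $\cC_{g,n/\bar k}\to\M_{g,n/\bar k}$ is a pullback, the section $s$ pulls back to a genuine continuous section $h$ of $\pi_1^\alg(\cC_{\H_{g,n}/k})\to\pi_1^\alg(\H_{g,n/k})$. The induced Lie algebra section $d\tilde h$ of $\v^\prol_{\cC_{g,n}}\to\v^\prol_{g,n}$ is not automatically weight-preserving (the weight filtration on $\v$ is not the lower central series), and this is exactly what Lemma~\ref{weight preserved for hyp} supplies, using Tanaka's vanishing. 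Theorem~\ref{hyp weight -1} then gives $\Gr^W_{-1}d\tilde h=\zeta^{\pm}_i$, and the comparison with $\Gr^W_{-1}d\tilde s$ is made only in weight $-1$, where the maps $\bigoplus_j H_j\to\Lambda^3_0H\oplus\bigoplus_j H_j$ are the evident inclusions, so no injectivity in deeper weights is ever needed; since $\zeta_0$ restricts to the zero $H_0$-component on $\bigoplus_{j=1}^n H_j$, it is excluded. If you replace your $\iota,\iota'$ transport by this group-level pullback plus Lemma~\ref{weight preserved for hyp}, your proof becomes the paper's.
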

\begin{proof}
Let $s$ be a continuous section of $\pi_1^\alg(\cC_{g,n/k})\to \pi_1^\alg(\M_{g,n/k})$. It induces a Lie algebra section $d\tilde{s}$ of $\u^\prol_{\cC_{g,n}}\to\u^\prol_{g,n}$. Since the weight filtrations on $\u^\prol_{\cC_{g,n}}$ and $\u^\prol_{g,n}$ coincide with their lower central series, $d\tilde{s}$ preserves their weight filtrations. Thus, the section $d\tilde{s}$ yields an $\Sp(H)$-equivariant graded Lie algebra section $\Gr^W_\bullet d\tilde{s}$ of $\Gr^W_\bullet\u^\prol_{\cC_{g,n}}\to \Gr^W_\bullet\u^\prol_{g,n}$.
 When $g\geq 4$, the map $\Gr^W_{-1}d\tilde{s}$ on the $\Gr^W_{-1}$-components is equal to $\zeta_i$ for some $i =1,\ldots,n$. 

 For the case when $g=3$, consider the section $h$ of $\pi_1^\alg(\cC_{\H_{g,n}/k}) \to \pi_1^\alg(\H_{g,n/k})$ obtained by pulling back $s$ along $\pi_1^\alg(\H_{g,n/k})\to \pi_1^\alg(\M_{g,n/k})$. The section $h$ yields a section $d\tilde{h}$ of $\v^\prol_{\cC_{g,n}}\to \v^\prol_{g,n}$. By Lemma \ref{weight preserved for hyp}, $d\tilde{h}$ induces a weight-preserving $\Sp(H)$-equivariant graded Lie algebra section $\Gr^W_\bullet d\tilde{h}$ of $\Gr^W_\bullet\v^\prol_{\cC_{g,n}}\to\Gr^W_\bullet\v^\prol_{g,n}$. In weight $-1$, there is the commutative diagram:
 \[
\xymatrix{
\Gr^W_{-1}\v^\prol_{\cC_{g,n}}= \bigoplus_{j=0}^nH_j\ar[r]\ar[d] &\ar@/^1pc/[l]^{\Gr^W_{-1}d\tilde h}\Gr^W_{-1}\v^\prol_{g,n} =\bigoplus_{j=1}^nH_j\ar[d]\\
\Gr^W_{-1}\u^\prol_{\cC_{g,n}} = \Lambda^3_0H \oplus\bigoplus_{j=0}^nH_j\ar[r] &\ar@/^1pc/[l]^{\Gr^W_{-1}d\tilde s}\Gr^W_{-1}\u^\prol_{g,n} = \Lambda^3_0H\oplus \bigoplus_{j=1}^nH_j,
}
 \]
 where the vertical maps are induced by the closed immersion $\H_{g/k}\hookrightarrow \M_{g/k}$.
 By Theorem \ref{hyp weight -1}, $\Gr^W_{-1}d\tilde{h} = \zeta^{\pm}_i$ for some $i=1,\ldots, n$. 
 Since $\Gr^W_{-1}d\tilde{h}$ is the restriction of $\Gr^W_{-1}d\tilde{s}$ to $\bigoplus_{j=1}^n H_j$, $\Gr^W_{-1}d\tilde{s}$ cannot be equal to $\zeta_0$. Therefore, it is one of the $\zeta_i$ when $g=3$ as well.

\end{proof}
\section{Proofs of the main theorems}


Let $k$ be a field of characteristic $0$, and let $\bar{k}$ denote an algebraic closure of $k$ in an algebraically closed field $\Omega$.  
Suppose that $g \geq 3$. Since a continuous section of $\pi_{g,[n]/k\ast}$ induces one of $\pi_{g,[n]/\bar{k}\ast}$, 
it suffices to prove the main theorems over $\bar{k}$.  
Let $\ell$ be a prime number, and set
$
H := H^1_{\et}(C, \mathbb{Q}_{\ell}(1)).
$
\subsection{Proof of Theorem 1}
 Suppose that the natural projection 
$$\pi_{g,[n]/\bar k\ast}:\pi_1^\alg(\cC_{g,[n]/\bar{k}}, \bar{x}) \to \pi_1^\alg(\M_{g,[n]/\bar{k}}, \bar{y})
$$ admits a continuous section $s$.
 Then $s$ induces an $\Sp(H)$-equivariant graded Lie algebra section
\[
\Gr^W_\bullet d\tilde{s}: \Gr^W_\bullet \u^\prol_{g,[n]} \to \Gr^W_\bullet \u^\prol_{\cC_{g,[n]}}.
\]

Pulling back $s$ along the map
\[
\pi_1^\alg(\M_{g,n/\bar{k}}, \bar{y}) \to \pi_1^\alg(\M_{g,[n]/\bar{k}}, \bar{y}),
\]
we obtain a section
\[
t: \pi_1^\alg(\M_{g,n/\bar{k}}, \bar{y}) \to \pi_1^\alg(\cC_{g,n/\bar{k}}, \bar{x}),
\]
which induces an $\Sp(H)$-equivariant graded Lie algebra section
\[
\Gr^W_\bullet d\tilde{t}: \Gr^W_\bullet \u^\prol_{g,n} \to \Gr^W_\bullet \u^\prol_{\cC_{g,n}}.
\]
By Lemma \ref{ab of u_{g,[n]}}, $H_1(\u_{\cC_{g,[n]}})$ is pure of weight -1, and hence $\Gr^W_{-1}\u^\prol_{\cC_{g,[n]}} \cong H_1(\u^\prol_{\cC_{g,[n]}})$. Now, fix an $\Sp(H)$-decomposition  
\[
H_1(\u^\prol_{\cC_{g,[n]}})\cong H_0\oplus \Lambda^3_0H \oplus H.
\]
Since the diagram
\[
\xymatrix{
H_1(\u^\prol_{\cC_{g,n}})\ar[r]\ar[d]_{\Gr^W_{-1}\mathrm{p}_{\cC}}&
H_1(\u^\prol_{g,n})\ar[d]^{\Gr^W_{-1}\mathrm{p}_{\M}}\\
H_1(\u^\prol_{\cC_{g,[n]}})\ar[r]&
H_1(\u^\prol_{g,[n]})
}
\]
is a pullback square, where the vertical maps are induced by the finite étale morphisms 
\(\cC_{g,n/\bar k}\to \cC_{g,[n]/\bar k}\) and 
\(\M_{g,n/\bar k}\to \M_{g,[n]/\bar k}\), 
the decomposition 
\[
H_1(\u^\prol_{\cC_{g,[n]}})
  \cong H_0 \oplus \Lambda^3_0 H \oplus H
\]
induces an \(\Sp(H)\)-decomposition
\[
H_1(\u^\prol_{\cC_{g,n}})
  \cong H_0 \oplus \Lambda^3_0 H \oplus \bigoplus_{j=1}^n H_j,
\]
under which the projection 
\(\Gr^W_{-1}\mathrm{p}_{\cC}\) 
is identified with
\[
\Gr^W_{-1}\mathrm{p}_{\cC} = (\id_{H_0},\, \Gr^W_{-1}\mathrm{p}_{\M}).
\]
Hence, in weight $-1$, there is a commutative diagram:
$$
\xymatrix{
H_0 \oplus \Lambda^3_0 H \oplus \bigoplus_{j=1}^nH_j \ar[r]\ar[d]^{\Gr^W_{-1}\mathrm{p}_\cC}&\ar@/^1pc/[l]^{\Gr^W_{-1}d\tilde t} \Lambda^3_0 H\oplus \bigoplus_{j=1}^nH_j\ar[d]^{\Gr^W_{-1}\mathrm{p}_\M}\\
H_0 \oplus  \Lambda^3_0 H\oplus H\ar[r] & \ar@/^1pc/[l]^{\Gr^W_{-1}d\tilde s} \Lambda^3_0 H\oplus H.
}
$$
By Proposition \ref{lie algebra sections for g > 2 in w = -1}, $\Gr^W_{-1}d\tilde t$ is equal to $\zeta_i$ for some $i$. Without loss of generality, we may assume $i =1$. Denote by $\mathrm{pr}_0$ the projection 
$$\Gr^W_{-1}\u^\prol_{\cC_{g,[n]}}\cong H_1(\u^\prol_{\cC_{g,[n]}}) = H_0  \oplus \Lambda^3_0 H \oplus H \to H_0.
$$
For any $x\in H$, consider $\bar x_1 :=(0; x,0, \ldots,0)\in\Gr^W_{-1}\u^\prol_{g,n}\cong H_1(\u^\prol_{g,n})=\Lambda^3_0H\oplus \bigoplus_{j=1}^n H_j$. Then we have
\begin{align*}
\mathrm{pr}_0\circ \Gr^W_{-1}d\tilde s\circ \Gr^W_{-1}\mathrm{p}_\M(\bar x_1) &= \mathrm{pr}_0\circ \Gr^W_{-1}\mathrm{p}_\cC\circ \Gr^W_{-1}d\tilde t (\bar x_1)\\
& = \mathrm{pr}_0\circ \Gr^W_{-1}\mathrm{p}_\cC\circ \zeta_1 (\bar x_1)\\
& = x.
\end{align*}
Since $\Gr^W_{-1}\mathrm{p}_\M(\bar x_1) = (0; x)$ in $\Lambda^3_0H \oplus H$, 
the restriction of $\Gr^W_{-1}d\tilde s$ to $H$ followed by $\mathrm{pr}_0$ is given by
$$
\mathrm{pr}_0\circ\Gr^W_{-1}ds: x\mapsto x\quad \text{ for }x\in H.
$$
On the other hand, consider $\bar x_2 := (0; 0, x, 0\ldots, 0)$ in $\Lambda^3_0H\oplus \bigoplus_{j=1}^n H_j$ with $x\not = 0 \in H$ being in the second copy of $H$ in $\bigoplus_{j=1}^n H_j$.
Then we have
\begin{align*}
x = \mathrm{pr}_0\circ \Gr^W_{-1}d\tilde s\circ \Gr^W_{-1}\mathrm{p}_\M(\bar x_2) &= \mathrm{pr}_0\circ \Gr^W_{-1}\mathrm{p}_\cC\circ \Gr^W_{-1}d\tilde t (\bar x_2)\\
& = \mathrm{pr}_0\circ \Gr^W_{-1}\mathrm{p}_\cC\circ \zeta_1 (\bar x_2)\\
& = 0,
\end{align*}
which is a contradiction. Therefore, $s$ does not exist. \qed

\subsection{Proof of Theorem 2}
Consider the universal hyperelliptic curves 
$$\pi^\hyp_{g,n/\bar k}:\cC_{\H_{g, n}/\bar k} \to \H_{g, n/\bar k}\quad \text{ and }\quad \pi^\hyp_{g, [n]/\bar k}:\cC_{\H_{g ,[n]}/\bar k}\to \H_{g, [n]/\bar k}.$$ 
Suppose that the projection $\pi^\hyp_{g, [n]/\bar k\ast} :\pi_1(\cC_{\H_{g, [n]}/\bar k}, \bar x)\to \pi_1(\H_{g, [n]/\bar k}, \bar y)$ admits a continuous section $h$. The commutative diagram
$$
\xymatrix{
\pi_1(\cC_{\H_{g,n}/\bar k}, \bar x)\ar[r]^{\pi^\hyp_{g,n/\bar k\ast}}\ar[d]&\pi_1(\H_{g,n/\bar k},\bar y)\ar[d]\\
\pi_1(\cC_{\H_{g, [n]}/\bar k}, \bar x)\ar[r]^{\pi^\hyp_{g, [n]/\bar k\ast}}& \pi_1(\H_{g, [n]/\bar k}, \bar y)
}
$$
is a pullback square, and so the section $h$ induces a continuous section $z$ of $\pi^\hyp_{g,n/\bar k\ast}$. The section $z$ induces a Lie algebra section $d\tilde z$ of $\v^\prol_{\cC_{g,n}}\to \v^\prol_{g,n}$.  By Lemma \ref{weight preserved for hyp}, $d\tilde z$ induces an $\Sp(H)$-equivariant weight-preserving graded Lie algebra section $\Gr^W_\bullet d\tilde z$ of the projection $\Gr^W_\bullet d\tilde{\pi}^{\hyp,\prol}_{g,n}: \Gr^W_\bullet \v^\prol_{\cC_{g,n}}\to \Gr^W_\bullet\v^\prol_{g,n}$. 
By Lemma~\ref{ab of v_{g,[n]}}, we fix an \(\Sp(H)\)-decomposition
\[
H_1(\v^\prol_{\cC_{g,[n]}})\cong H_0 \oplus H_1(\v^\prol_g) \oplus H,
\]
which, by Lemma~\ref{H_1 decomp for rel comp of hyp mcg}, induces the \(\Sp(H)\)-decomposition
\[
H_1(\v^\prol_{\cC_{g,n}})\cong H_0 \oplus H_1(\v^\prol_g) \oplus \bigoplus_{j=1}^n H_j.
\]
It follows that
\[
\Gr^W_{-1}\v^\prol_{\cC_{g,[n]}}
  \cong \Gr^W_{-1}H_1(\v^\prol_{\cC_{g,[n]}})
  = H_0 \oplus H,
\]and
\[\Gr^W_{-1}\v^\prol_{\cC_{g,n}}
  \cong \Gr^W_{-1}H_1(\v^\prol_{\cC_{g,n}})
  = H_0 \oplus \bigoplus_{j=1}^n H_j.
\]
Moreover, by Lemma~\ref{H_1 decomp for rel comp of hyp mcg},
\[
\Gr^W_{-1}\v^\prol_{g,n}
  \cong \Gr^W_{-1}H_1(\v^\prol_{g,n})
  = \bigoplus_{j=1}^n H_j,
\]
and by Lemma~\ref{ab of v_{g,[n]}}
\[
\Gr^W_{-1}\v^\prol_{g,[n]}
  \cong \Gr^W_{-1}H_1(\v^\prol_{g,[n]})
  = H.
\]
As in the proof of Theorem~1, under these identifications we obtain, in weight~\(-1\), the following commutative diagram:
\[
\xymatrix{
H_0 \oplus \bigoplus_{j=1}^n H_j 
\ar[r]\ar[d]^{\Gr^W_{-1}\mathrm{p}_{\cC_\H}} 
  & \bigoplus_{j=1}^n H_j 
      \ar@/_1pc/[l]_{\Gr^W_{-1}d\tilde z}
      \ar[d]^{\Gr^W_{-1}\mathrm{p}_\H} \\
H_0 \oplus H 
  \ar[r] 
  & H 
      \ar@/^1pc/[l]^{\Gr^W_{-1}d\tilde h}
}
\]
where the vertical maps are induced by the finite étale morphisms 
\(\cC_{\H_{g,n}/\bar k}\to \cC_{\H_{g,[n]}/\bar k}\) and 
\(\H_{g,n/\bar k}\to \H_{g,[n]/\bar k}\). 
Under the above identifications, the projection 
\(\Gr^W_{-1}\mathrm{p}_{\cC_\H}\) is given by
\[
\Gr^W_{-1}\mathrm{p}_{\cC_\H} = (\id_{H_0},\, \Gr^W_{-1}\mathrm{p}_\H).
\]
By Theorem \ref{hyp weight -1}, $\Gr^W_{-1}d\tilde z$ is equal to $\zeta_i^\pm$ for some $i$. With loss of generality, we may assume $ i =1$. Then an argument similar to the proof of Theorem 1 shows that 
$$
\mathrm{pr}_0\circ \Gr^W_{-1}d\tilde h (x) = \pm x \quad \text{ for } x\in H.
$$
On the other hand, for $\bar x_2 =(0, x,0\ldots,0)$ with $x\not =0\in H$ being in the second copy of $H$ in $\bigoplus_{j=1}^nH_j$, we have
\begin{align*}
\pm x = \mathrm{pr}_0\circ \Gr^W_{-1}d\tilde h\circ \Gr^W_{-1}\mathrm{p}_\H(\bar x_2) &= \mathrm{pr}_0\circ \Gr^W_{-1}\mathrm{p}_{\cC_\H}\circ \Gr^W_{-1}d\tilde z (\bar x_2)\\
& = \mathrm{pr}_0\circ \Gr^W_{-1}\mathrm{p}_{\cC_{\H}}\circ \zeta^\pm_1 (\bar x_2)\\
& = 0,
\end{align*}
which is a contradiction. Therefore, $\pi^\hyp_{g, [n]/\bar k\ast}$ does not admit any continuous section. \qed

\section{Notation and Glossary}

Throughout, $k$ denotes a field of characteristic $0$, and $\bar{k}$ its algebraic closure.
All stacks are Deligne–Mumford. For a complex stack $X/\C$, $X^{\an}$ is its analytification.
For a group $G$, $\widehat{G}$ denotes its profinite completion.

\subsection*{Moduli stacks and universal curves}
\begin{align*}
  &\M_{g,n/k}
    && \text{moduli of smooth proper genus-$g$}\\
  &&& \text{curves with $n$ ordered marked points},\\
  &\M_{g,[n]/k}:=[\M_{g,n/k}/S_n]
    && \text{moduli with $n$ unordered points},\\
  &\H_{g,n/k}\subset \M_{g,n/k},\ 
    \H_{g,[n]/k}:=[\H_{g,n/k}/S_n]
    && \text{hyperelliptic loci},\\
  &\cC_{g,n/k}\to \M_{g,n/k},\ 
    \cC_{g,[n]/k}\to \M_{g,[n]/k}
    && \text{universal curves},\\
  &\cC_{\H_{g,n}/k}\to \H_{g,n/k},\ 
    \cC_{\H_{g,[n]}/k}\to \H_{g,[n]/k}
    && \text{universal hyperelliptic curves.}
\end{align*}
\subsection*{Mapping class groups}
\begin{align*}
  &\G_{g,n} && \text{mapping class group, $n$ ordered marked points},\\
  &\G_{g,[n]}:=\G_{g,n}/S_n && \text{unordered version},\\
  &\Delta_g && \text{hyperelliptic mapping class group},\\
  &\Delta_{g,n},\ \Delta_{g,[n]} && \text{hyperelliptic versions with ordered/unordered points},\\
  &T_{g,n},\ T\Delta_{g,n} && \text{Torelli and hyperelliptic Torelli groups}.
\end{align*}

\subsection*{Relative completions (over \texorpdfstring{$\Q$}{Q}) and Lie algebras}
Each group, denoted by $\cG$, $\cD$, or their variants, is the relative completion (over $\Q$) 
of the corresponding orbifold fundamental group with respect to the standard symplectic representation.  
Their unipotent radicals are denoted by $\U$, $\cV$, and their Lie algebras by $\g$, $\d$, $\u$, and $\v$, respectively.
These objects are defined over $\Q$.
\[
\begin{array}{ll}
\cG_{g,n},\ \U_{g,n},\ \g_{g,n},\ \u_{g,n} 
  &:\ \text{completion of }\pi_1^\orb(\M^\an_{g,n})\cong\G_{g,n},\\[2pt]
\cG_{g,[n]},\ \U_{g,[n]},\ \g_{g,[n]},\ \u_{g,[n]} 
  &:\ \text{completion of }\pi_1^\orb(\M^\an_{g,[n]})\cong\G_{g,[n]},\\[2pt]
\cD_{g,n},\ \cV_{g,n},\ \d_{g,n},\ \v_{g,n} 
  &:\ \text{completion of }\pi_1^\orb(\H^\an_{g,n})\cong\Delta_{g,n},\\[2pt]
\cD_{g,[n]},\ \cV_{g,[n]},\ \d_{g,[n]},\ \v_{g,[n]} 
  &:\ \text{completion of }\pi_1^\orb(\H^\an_{g,[n]})\cong\Delta_{g,[n]},\\[2pt]
\cG_{\cC_{g,n}},\ \U_{\cC_{g,n}},\ \g_{\cC_{g,n}},\ \u_{\cC_{g,n}} 
  &:\ \text{completion of }\pi_1^{\orb}(\cC_{g,n}^{\an}),\\[2pt]
\cG_{\cC_{g,[n]}},\ \U_{\cC_{g,[n]}},\ \g_{\cC_{g,[n]}},\ \u_{\cC_{g,[n]}} 
  &:\ \text{completion of }\pi_1^{\orb}(\cC_{g,[n]}^{\an}),\\[2pt]
\cD_{\cC_{g,n}},\ \cV_{\cC_{g,n}},\ \d_{\cC_{g,n}},\ \v_{\H_{g,n}} 
  &:\ \text{completion of }\pi_1^{\orb}(\cC_{\H_{g,n}}^{\an}),\\[2pt]
\cD_{\cC_{g,[n]}},\ \cV_{\cC_{g,[n]}},\ \d_{\cC_{g,[n]}},\ \v_{\cC_{g,[n]}} 
  &:\ \text{completion of }\pi_1^{\orb}(\cC_{\H_{g,[n]}}^{\an}).
\end{array}
\]

\subsection*{Continuous relative completions}

In the \'etale setting, one considers the continuous relative completions of 
algebraic fundamental groups with respect to the standard symplectic representation over~$\Ql$.  
These completions are the continuous analogues of those defined in the analytic setting over~$\Q$, 
and they are related by the natural comparison isomorphisms
\[
  \g^\prol_{g,n} \cong \g_{g,n}\otimes_\Q \Ql,
  \qquad
  \u^\prol_{g,n} \cong \u_{g,n}\otimes_\Q \Ql,
\]
and similarly for all other completions considered below.

Each group denoted by $\cG^\prol$, $\cD^\prol$, or their variants is the continuous relative completion 
of the corresponding \'etale fundamental group with respect to the standard symplectic representation.  
Their unipotent radicals are denoted by $\U^\prol$, $\cV^\prol$, and their Lie algebras by 
$\g^\prol$, $\d^\prol$, $\u^\prol$, and $\v^\prol$, respectively.

Objects used in the continuous case (all over $\Ql$):
\[
\begin{array}{ll}
\cG^\prol_{g,n},\ \U^\prol_{g,n},\ \g^\prol_{g,n},\ \u^\prol_{g,n} 
  &:\ \text{completion of }\pi_1^{\alg}(\M_{g,n/\bar k})\cong\widehat{\G_{g,n}},\\[2pt]
\cG^\prol_{g,[n]},\ \U^\prol_{g,[n]},\ \g^\prol_{g,[n]},\ \u^\prol_{g,[n]} 
      &:\ \text{completion of }\pi_1^{\alg}(\M_{g,[n]/\bar k})\cong\widehat{\G_{g,[n]}},\\[2pt]
\cD^\prol_{g,n},\ \cV^\prol_{g,n},\ \d^\prol_{g,n},\ \v^\prol_{g,n} 
  &:\ \text{completion of }\pi_1^{\alg}(\H_{g,n/\bar k})\cong\widehat{\Delta_{g,n}},\\[2pt]
\cD^\prol_{g,[n]},\ \cV^\prol_{g,[n]},\ \d^\prol_{g,[n]},\ \v^\prol_{g,[n]} 
  &:\ \text{completion of }\pi_1^{\alg}(\H_{g,[n]/\bar k})\cong\widehat{\Delta_{g,[n]}},\\[2pt]
\cG^\prol_{\cC_{g,n}},\ \U^\prol_{\cC_{g,n}},\ \g^\prol_{\cC_{g,n}},\ \u^\prol_{\cC_{g,n}} 
  &:\ \text{completion of }\pi_1^{\alg}(\cC_{g,n/\bar k}),\\[2pt]
\cG^\prol_{\cC_{g,[n]}},\ \U^\prol_{\cC_{g,[n]}},\ \g^\prol_{\cC_{g,[n]}},\ \u^\prol_{\cC_{g,[n]}} 
  &:\ \text{completion of }\pi_1^{\alg}(\cC_{g,[n]/\bar k}),\\[2pt]
\cD^\prol_{\cC_{g,n}},\ \cV^\prol_{\cC_{g,n}},\ \d^\prol_{\cC_{g,n}},\ \v^\prol_{\cC_{g,n}} 
  &:\ \text{completion of }\pi_1^{\alg}(\cC_{\H_{g,n}/\bar k}),\\[2pt]
\cD^\prol_{\cC_{g,[n]}},\ \cV^\prol_{\cC_{g,[n]}},\ \d^\prol_{\cC_{g,[n]}},\ \v^\prol_{\cC_{g,[n]}} 
  &:\ \text{completion of }\pi_1^{\alg}(\cC_{\H_{g,[n]}/\bar k}).
\end{array}
\]
\subsection*{Configuration Lie algebras}
\begin{align*}
  \p_{g,n}
    &:= \Lie\!\left(\text{unipotent completion of }\pi_1^\top(F_n(C))\right),\\
    \p_g: =\p_{g,1}&:= \Lie\!\left(\text{unipotent completion of }\pi_1^\top(C)\right),\\
  \p_{g,[n]}
    &:= \Lie\!\left(\text{unipotent completion of }\pi_1^\top(F_{[n]}(C))\right).
\end{align*}
and their $\Q_\ell$-forms
\[
  \p_{g,n}^{(\ell)}\cong \p_{g,n}\otimes_{\Q}\Q_\ell,\qquad
  \p_{g,[n]}^{(\ell)}\cong \p_{g,[n]}\otimes_{\Q}\Q_\ell.
\]

\bibliographystyle{alpha}   
\bibliography{references}

\end{document}